\documentclass[a4paper,12pt]				{amsart}
\usepackage[T1]								{fontenc} 							
\usepackage[ansinew]						{inputenc} 							
\usepackage									{ellipsis} 							
\usepackage									{lmodern} 							
\usepackage									{amssymb, amsmath, amsthm}			
\usepackage									{bm, bbm}							
\usepackage[fixamsmath]						{mathtools} 						
\usepackage[shortlabels]					{enumitem} 							
\usepackage[spacing,kerning]				{microtype}
\usepackage									{geometry}
\usepackage 								{hyperref}
\usepackage[style=alphabetic]				{biblatex}
\usepackage									{csquotes}							
\usepackage									{todonotes}
\usepackage[foot]							{amsaddr}
\usepackage									{subcaption}

\allowdisplaybreaks
\ExecuteBibliographyOptions {sorting=nyt, bibwarn=false, texencoding=auto, bibencoding=auto, hyperref=true }
\microtypecontext{spacing=nonfrench}

\theoremstyle{plain}
\newtheorem{thm}					{Theorem}[section]

\newtheorem{cor}			[thm]	{Corollary}
\newtheorem{proposition}	[thm]	{Proposition}

\theoremstyle{definition}

\theoremstyle{remark}

\newtheorem*{exa*}					{Example}
\newtheorem*{rem*}					{Remark}

\DeclareMathOperator	{\IN}		{\mathbb{N}} 	
	\DeclareMathOperator	{\IR}		{\mathbb{R}}
	\DeclareMathOperator	{\IE}		{\mathbb{E}}

\DeclareMathOperator	{\Var}		{Var}

\DeclareMathOperator	{\Ent}		{Ent}
\DeclareMathOperator	{\Cov}		{Cov}

\DeclareMathOperator	{\supp}		{supp}
\DeclareMathOperator	{\bb}		{\boldsymbol{\beta}}
\DeclareMathOperator	{\ERGM}		{ERGM}

\DeclarePairedDelimiter	\abs		{\lvert}	{\rvert}
\DeclarePairedDelimiter	\norm		{\lVert}	{\rVert}
\DeclarePairedDelimiter	\skal		{\langle}	{\rangle}

\newcommand				{\eins}		{\text{$\mathbbm{1}$}}

\renewcommand 			{\epsilon}	{\varepsilon}
\renewcommand			{\phi}		{\varphi}
\renewcommand			{\partial}	{\mathfrak{d}}
\renewcommand			{\tilde}	{\widetilde}

\pagestyle				{plain}
\addbibresource			{../literature.bib}

\clubpenalty10000
\widowpenalty10000
\displaywidowpenalty=10000

\geometry				{left=30mm,right=30mm, top=30mm, bottom=30mm}
\numberwithin			{equation}{section}
\title{Logarithmic Sobolev inequalities for finite spin systems and applications}

\author		{Holger Sambale$^1$ and Arthur Sinulis$^1$}
\address	{$^1$Faculty of Mathematics, Bielefeld University, Bielefeld, Germany}

\email[A1]		{hsambale@math.uni-bielefeld.de}
\email[A2]		{asinulis@math.uni-bielefeld.de}

\subjclass{Primary 05C80, 60E15, Secondary 05C15, 60K35}
\keywords{central limit theorem, concentration of measure, exponential random graph model, finite product spaces, logarithmic Sobolev inequalities, mixing time, spin systems}
\thanks{This research was supported by the German Research Council (DFG) via the CRC 1283 \emph{Taming uncertainty and profiting from randomness and low regularity in analysis, stochastics and their applications}. }
\date{\today}

\begin{document}
\begin{abstract}
	We derive sufficient conditions for a probability measure on a finite product space (a \emph{spin system}) to satisfy a (modified) logarithmic Sobolev inequality. We establish these conditions for various examples, such as the (vertex-weighted) exponential random graph model, the random coloring and the hard-core model with fugacity. 
    
    This leads to two separate branches of applications. The first branch is given by mixing time estimates of the Glauber dynamics. The proofs do not rely on coupling arguments, but instead use functional inequalities. As a byproduct, this also yields exponential decay of the relative entropy along the Glauber semigroup.
    Secondly, we investigate the concentration of measure phenomenon (particularly of higher order) for these spin systems. We show the effect of better concentration properties by centering not around the mean, but a stochastic term in the exponential random graph model. From there, one can deduce a central limit theorem for the number of triangles from the CLT of the edge count. In the Erd{\"o}s--R{\'e}nyi model the first order approximation leads to a quantification and a proof of a central limit theorem for subgraph counts.
\end{abstract}

\maketitle

\section{Introduction}                  \label{section:Introduction}
\emph{Spin systems} are ubiquitous in the modeling of various phenomena, ranging from toy models to explain ferromagnetism (the Ising and the Potts model, or more generally the random cluster model) to voter models (e.g. interpreting the Ising model as a social choice with binary options), various network models (such as the Erd\"os-Renyi or the exponential random graph model) and models with \emph{hard constraints} such as the random proper coloring model or the hard-core model. 

From the physical viewpoint, a spin system models a collection of particles attaining different states and interacting with each other, so that the complete system consists of a set of \emph{configurations} of the form $\mathcal{X}^{\mathcal{I}}$. 
Mathematically, a spin system can be described as a probability measure $\mu$ on such a product space $\mathcal{Y} \coloneqq \mathcal{X}^{\mathcal{I}}$, and hard constraints translate into conditions on the support of the probability measure. Here we consider finite spin systems, i.\,e. the sets $\mathcal{X}$ (the \emph{spins}) and $\mathcal{I}$ (the \emph{sites}) are finite. 

Albeit very elementary, these finite spin systems can have a rich dependence structure among the sites. Indeed, many toy models of statistical mechanics are defined as finite spin systems (as the aforementioned Ising model on a finite graph). 
We are interested in the regimes in which the sites exhibit behavior typical of independent random variables. To this end, we define suitable notions of \emph{weak dependence} which, on the technical side, lead to (modified) logarithmic Sobolev inequalities.

Based on these general results, we study two different branches of applications. Firstly we want to study the mixing time of the \emph{Glauber dynamics} associated to a sequence of weakly dependent spin systems $(\mu_n)_n$ on $\mathcal{X}^{\mathcal{I}_n}$ which are rapidly mixing, i.\,e. the mixing time is $O(\abs{\mathcal{I}_n} \log \abs{\mathcal{I}_n})$. Secondly, we study the concentration of measure phenomenon (of higher order) for these kinds of models. From the very definition of both concepts, it is necessary to consider a sequence of finite spin systems with increasing number of sites.

\subsection{The models}	\label{subsection:themodels}
Our main results are valid for arbitrary weakly dependent spin systems. At the same time, throughout this paper we put a special focus on a number of models for which we establish sufficient conditions for weak dependence and apply our general results. Let us emphasize that any of the models depends on a parameter $n \in \IN$, so that we are considering a sequence of spin systems with a growing number of sites. We will usually suppress this dependence on the number of sites. We sometimes specify a spin system by defining a Hamiltonian, i.\,e. a function $H: \mathcal{Y} \to \IR$. The spin system associated to $H$ is given by the \emph{Gibbs measure}
\[
\mu(\sigma) = \mu_H(\sigma) = Z^{-1} \exp(H(\sigma)) \quad \text{for} \quad Z = \sum_{\sigma \in \mathcal{Y}} \exp(H(\sigma)).
\]

\subsubsection{The exponential random graph model}
The first spin system we consider is the exponential random graph model. 
For a thorough historical overview and asymptotic results we refer to the well-written survey \cite{Cha16}. The model is an extension of the Erd{\"o}s--R{\'e}nyi model to account for dependence between the edges.

For two simple graphs $G_1 = (V_1, E_1)$ and $G_2 = (V_2, E_2)$ let $N_{G_1}(G_2)$ be the number of graph homomorphisms from $G_1$ to $G_2$, i.\,e. injective maps $\phi: V_1 \to V_2$ which preserve edges. Moreover $\mathcal{G}_n$ shall be the set of all simple graphs on $n$ vertices, labeled $\{1,\ldots, n\}$, and $\mathcal{I}_n \coloneqq \{ (i,j) \in \{1,\ldots, n\}^2 : i < j \}$.
Let $\bb = (\beta_1,\ldots, \beta_s) \in \IR^s$ and $G_1, \ldots, G_s$ be simple, connected graphs $G_i = (V_i, E_i)$. The exponential random graph model with parameters $(\boldsymbol{\beta}, G_1, \ldots, G_s)$, denoted by $\mu_{\bb}$, is defined as the spin system on $\{0,1 \}^{\mathcal{I}_n}$ associated to the Hamiltonian
\[
	H_{\boldsymbol{\beta}}(x) \coloneqq n^2 \sum_{i = 1}^s \beta_i \frac{N_{G_i}(x)}{n^{\abs{V_i}}}.
\]
We frequently abbreviate this as $\ERGM(\boldsymbol{\beta},G_1,\ldots,G_s)$. By convention, $G_1$ is the complete graph on two vertices $K_2$. Note that for $s = 1$ we obtain the Erd{\"o}s--R{\'e}nyi model with parameter $p = e^\beta (1+e^\beta)^{-1}$.

For any set of parameters $(\bb, G_1, \ldots, G_s)$ we define the functions $\Phi_{\bb}, \phi_{\bb}: [0,1] \to \IR$
\begin{align*}
  \Phi_{\bb} (x) & = \sum_{i = 1}^s \beta_i \abs{E_i} x^{\abs{E_i}-1} = \beta_1 + \sum_{i = 2}^s \beta_2 \abs{E_i} x^{\abs{E_i} -1} \\
  \phi_{\bb}(x)  & = \frac{\exp(2\Phi_{\bb}(x))}{1+\exp(2\Phi_{\bb}(x))} = \frac{1}{2}(1 + \tanh(\Phi_{\bb}(x))),
\end{align*}
and set $\abs{\bb} \coloneqq (\abs{\beta_1}, \ldots, \abs{\beta_s})$. The function $\Phi'_{\abs{\bb}}$ (i.\,e. the derivative of $\Phi_{\abs{\bb}}$) will appear in the condition of weak dependence in this model.

\subsubsection{Vertex-weighted random graph model}
A second model of random graphs - the vertex-weighted exponential random graph model - was recently introduced in \cite{DEY17}. The parameter-space is three-dimensional, i.\,e. $\beta = (\beta_1, \beta_2, p) \in \IR^2 \times (0,1)$, and the model is the spin system on $\mathcal{Y} = \{0,1\}^n$ defined via the Hamiltonian
\[
H(\sigma) \coloneqq \log\left( \frac{p}{1-p} \right) \sum_i \sigma_i + \frac{\beta_1}{n}\sum_{i \neq j} \sigma_i \sigma_j + \frac{\beta_2}{n^2}\sum_{i \neq j \neq k} \sigma_i \sigma_j \sigma_k.
\]
Note that it resembles the Hamiltonian in the exponential random graph model. On the other hand, it can also be seen as an extension of the Curie--Weiss model on the complete graph with interactions given by a quadratic \emph{and} a cubic form. 
We define the function
\[
	\phi_\beta(x) \coloneqq \frac{\exp(h_\beta(x))}{1+\exp_\beta(h(x))} = \frac{\exp\left( \beta_1 x + \beta_2 x^2 + \log(p/(1-p)) \right)}{1+\exp\left( \beta_1 x + \beta_2 x^2 + \log(p/(1-p)) \right)}.
\]
Similarly to the ERGM, the derivative of $\phi_{\beta}$ will determine whether the system is weakly dependent.

\subsubsection{Random coloring model}
Given a finite graph $G = (V,E)$ and a set of colors $C = \{1,\ldots,k\}$, the configuration space in the random coloring model is the set of all proper colorings $\Omega_0 \subset C^V$, i.e. the set of all $\sigma \in C^V$ such that $\{v,w \} \in E \Rightarrow \phi_v \neq \phi_w$, and $\mu = \mu(G,C)$ denotes the uniform distribution on $\Omega_0$. 

\subsubsection{Hard-core model with fugacity}
Another model with hard constraints is the hard-core model with \emph{fugacity parameter} $\lambda$. Given a graph $G = (V,E)$, the hard-core model $\mu$ is the spin system on $\mathcal{Y} = \{0,1\}^{V}$ satisfying
\[
\mu(\sigma) = \begin{cases}
Z^{-1} \prod_i \lambda^{\sigma_i} & \sigma \text{ admissible} \\
0 & \text{otherwise}
\end{cases}.
\] 
Here, an admissible configuration satisfies $\sigma_v \sigma_w = 0$ for all $\{v, w\} \in E$.

\subsection{Outline}	\label{subsection:outline}
The structure of this paper is as follows. We formulate the main results for mixing times in Section \ref{section:mixingtimes} (see Theorems \ref{theorem:GlauberDynamicsRapidMixing} and \ref{theorem:AbstractResult}) and prove them in Section \ref{section:proofMixingTimes}. The concentration of measure results are given in Section \ref{section:CoM}, with Subsection \ref{subsection:triangleERGM} containing the concentration inequalities for the triangle count in the ERGM (Theorem \ref{theorem:ERGMtriangleLpandTails}) as well as the central limit theorem results (Corollary \ref{corollary:CLTERGM}). Section \ref{subsection:CLTER} discusses the central limit theorem in the Erd{\"o}s--R{\'e}nyi model. These results are applications of Theorems \ref{theorem:com} and \ref{theorem:fdPolynomials} given in Subsection \ref{subsection:CoMgeneralresults}. We prove all concentration results in Section \ref{section:ProofsCoM}. The proof of the (modified) logarithmic Sobolev inequalities in the models from Subsection \ref{subsection:themodels} can be found in Section \ref{section:weakdependence}.

\section{Mixing times}	\label{section:mixingtimes}
For spin systems $\mu$ with many sites, one way to sample from these is to use the Markov chain convergence theorem. One canonical Markov Chain is the Glauber dynamics, which is a $\mathcal{Y}$-valued ergodic Markov chain $(Y_t)_{t \in \IN_0}$ with reversible distribution $\mu$. At each step, it selects a site $i \in \mathcal{I}$ uniformly at random and updates it with the conditional probability given $\overline{x}_i = (x_j)_{j\ne i}$, i.\,e. its transition probability is given by
\[
	P(x,y) = \abs{\mathcal{I}}^{-1} \sum_{i \in \mathcal{I}} \mu(y_i \mid \overline{x}_i) \prod_{j \neq i} \delta_{x_j = y_j},
\]
where $\delta$ is the Dirac delta. Under mild conditions, if $(Y_t)_{t \in \IN_0}$ is a Markov chain on a finite space $\mathcal{Y}$ with a reversible measure $\nu$, the distribution of $Y_t$ converges to $\nu$. 
This convergence can be quantified using various metrics between probability measures, and we choose the total variation distance
	$d_{\mathrm{TV}}(\mu_1, \mu_2) \coloneqq \sup_{A \subset \mathcal{Y}} \abs{\mu_1(A) - \mu_2(A)} = \frac{1}{2} \sum_{x \in \mathcal{Y}} \abs{\mu_1(x) - \mu_2(x)}.$
In the continuous-time case, we define the \emph{mixing time} as
\begin{align*}
	t_{\mathrm{mix}} \coloneqq \inf \{ t \in \IR_+ : \max_{y \in \mathcal{Y}} d_{\mathrm{TV}}(P^t(y, \cdot), \nu) \le e^{-1} \}.
\end{align*}
Here, $P^t(y,\cdot)$ is defined as the distribution of $Y_t$ given $Y_0 = y$. 

Our proofs rely on modified logarithmic Sobolev inequalities. Let $P$ be the transition matrix of a Markov chain on $\mathcal{Y}$ and $-L = I - P$ be its generator. If $P$ is reversible with respect to a measure $\mu$, we can define the entropy functional 
	$\Ent_\mu(f) \coloneqq \IE_\mu f \log f - \IE_\mu f \log( \IE_\mu f ) \text{ for } f \ge 0$
and the Dirichlet form
	$\mathcal{E}(f,g) \coloneqq \IE_\mu f (-Lg).$
We say that the triple $(\mathcal{Y}, P, \mu)$ (or in short $P$, if the space and the measure are clear from the context) satisfies a \emph{modified logarithmic Sobolev inequality} with constant $\rho_0$ (in short: $\mathrm{mLSI}(\rho_0)$), if for all $f: \mathcal{Y} \to \IR_+$ we have
\begin{align}	\label{eqn:defi:modLSI}
	\Ent_\mu(f) \le \frac{\rho_0}{2} \mathcal{E}(f,\log f).
\end{align}
The smallest $\rho_0$ in \eqref{eqn:defi:modLSI} is called modified logarithmic Sobolev (or entropy) constant, see e.g. \cite{BT06} and the definition of $\beta$ in \cite{GQ03}. It is known that the modified logarithmic Sobolev constant can be used to bound the mixing time for the total variation distance of (the distribution of) a Markov semigroup and its trend to equilibrium, see for example \cite[Theorem 2.4]{BT06}.

We will prove the following theorem, which establishes the $\mathrm{mLSI}(\rho_0)$ for the models introduced in Section \ref{subsection:themodels}, and consequently the exponential decay of the entropy along the Glauber semigroup and the rapid mixing property thereof.

\begin{thm}\label{theorem:GlauberDynamicsRapidMixing}
The Glauber dynamics satisfies a $\mathrm{mLSI}(\rho_0)$ with $\rho_0 \le C \abs{\mathcal{I}_n}$ in all following spin systems. The constant $C$ may depend on the parameters of the spin system, but not on $n$.
\begin{enumerate}
\item Any exponential random graph model $\mu_{\bb}$ such that $\frac{1}{2} \Phi'_{\abs{\bb}}(1) < 1$.
\item The vertex-weighted ERGM for $\beta \coloneqq (\beta_1, \beta_2, p)$ satisfying $\sup_{x \in (0,1)} \abs{\phi_\beta'(x)} < 1$.
\item The random coloring model $\mu_{G_n}$ for any sequence of graphs $G_n = (V_n,E_n)$ with uniformly bounded maximum degree $\Delta$ and $k \ge 2\Delta + 1$.
\item The hard-core model with fugacity $\lambda$ on any sequence of graphs $G_n = (V_n, E_n)$ with bounded maximum degree $\Delta$ and $\lambda < \frac{1}{\Delta-1}$.
\end{enumerate}
Consequently, the relative entropy decreases exponentially along the Glauber semigroup, i.\,e. for any $\mu$-density $f$ and the Glauber semigroup $(P_t)_{t \ge 0}$ we have 
\[
\Ent_{\mu}(P_t f) \le \exp(-2t/\rho_0) \Ent_{\mu}(f).
\]
Finally, in all the cases, the Glauber dynamics is rapidly mixing.
\end{thm}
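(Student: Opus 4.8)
The plan is to prove each of the four items by establishing a suitable (modified) logarithmic Sobolev inequality via an abstract perturbation/tensorization scheme, and then to feed the resulting $\mathrm{mLSI}(\rho_0)$ with $\rho_0 = O(\abs{\CI_n})$ into the standard machinery relating the entropy constant to semigroup decay and mixing. First I would recall that on a finite product space the product measure (i.e. independent spins, or in the hard-constraint models the trivial single-site dynamics) satisfies an $\mathrm{mLSI}$ with constant of order $\abs{\CI_n}$, since each single-site conditional dynamics has an $O(1)$ modified log-Sobolev constant and these tensorize. The content of items (1)--(4) is then that the dependence among sites is weak enough that this product-case inequality survives, up to a constant factor, for the genuinely dependent measure $\mu$. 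Concretely, I expect to invoke the abstract criterion of the earlier sections (the ``weak dependence'' hypothesis, quantified in the ERGM by $\tfrac12\Phi'_{\abs{\bb}}(1)<1$, in the vertex-weighted model by $\sup_x\abs{\phi_\beta'(x)}<1$, and in the coloring and hard-core models by Dobrushin-type conditions $k\ge 2\Delta+1$, resp. $\lambda<\tfrac1{\Delta-1}$): under such a condition the influence matrix has operator norm bounded away from $1$ uniformly in $n$, which by an Otto--Reznikoff / approximate-tensorization argument upgrades the single-site $\mathrm{mLSI}$ to an $\mathrm{mLSI}(\rho_0)$ for $\mu$ with $\rho_0\le C\abs{\CI_n}$, $C$ independent of $n$.

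For the four concrete checks I would proceed model by model. For the ERGM and the vertex-weighted ERGM, the key computation is that the single-site conditional probabilities are of the form $\phi(\text{empirical average})$ with $\phi$ as defined in Section~\ref{subsection:themodels}, so the influence of site $j$ on site $i$ is, up to lower-order corrections, $\tfrac1{\abs{\CI_n}}$ times a derivative of $\phi$ evaluated on $[0,1]$; summing over $j$ gives an influence bounded by $\sup\abs{\phi'}$ (resp. $\tfrac12\Phi'_{\abs{\bb}}(1)$, using monotonicity of $\Phi'_{\abs{\bb}}$ on $[0,1]$), which is $<1$ by hypothesis. For the random coloring model the relevant bound is the classical one: when $k\ge 2\Delta+1$ the Glauber dynamics mixes and the single-site influences sum to at most $\tfrac{\Delta}{k-\Delta}<1$; for the hard-core model the condition $\lambda<\tfrac1{\Delta-1}$ similarly forces the Dobrushin influence sum below $1$. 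In each case the ``$<1$'' is exactly what the abstract criterion consumes, and I would cite it to conclude $\mathrm{mLSI}(C\abs{\CI_n})$.

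Given the $\mathrm{mLSI}(\rho_0)$, the two concluding assertions are routine. Exponential entropy decay $\Ent_\mu(P_tf)\le e^{-2t/\rho_0}\Ent_\mu(f)$ is the standard equivalence: differentiating $t\mapsto\Ent_\mu(P_tf)$ gives $\tfrac{d}{dt}\Ent_\mu(P_tf)=-\CE(P_tf,\log P_tf)\le-\tfrac2{\rho_0}\Ent_\mu(P_tf)$ by \eqref{eqn:defi:modLSI}, and Grönwall finishes it. Rapid mixing then follows by the usual Pinsker-plus-entropy-decay argument (as in \cite[Theorem~2.4]{BT06}): starting from a point mass $\delta_y$ one has $\Ent_\mu(P_0 f)\le\log(1/\mu_{\min})$, where $\mu_{\min}\ge e^{-O(\abs{\CI_n})}$ in all four models (the Hamiltonians are $O(\abs{\CI_n})$-bounded, and in the hard-constraint models $\abs{\Omega_0}\le\abs{\CX}^{\abs{\CI_n}}$), so $d_{\mathrm{TV}}(P^t(y,\cdot),\mu)^2\le\tfrac12\Ent_\mu(P_tf)\le\tfrac12 e^{-2t/\rho_0}\log(1/\mu_{\min})=e^{-2t/\rho_0}\,O(\abs{\CI_n})$, which drops below $e^{-1}$ once $t=O(\rho_0\log\abs{\CI_n})=O(\abs{\CI_n}\log\abs{\CI_n})$; a discretization remark passes from continuous to discrete time. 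The main obstacle I anticipate is not the soft part but verifying the influence bounds cleanly in the ERGM-type models: one must control the \emph{higher-order} terms in the conditional Gibbs probabilities (the $N_{G_i}$ counts are not exactly functions of a single empirical average once one conditions), showing these corrections are $o(1)$ uniformly so that the effective influence sum is still governed by $\phi'$ on $[0,1]$; this is where the normalization by $n^{\abs{V_i}}$ and the connectedness of the $G_i$ are used.
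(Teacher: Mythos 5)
Your overall strategy coincides with the paper's: quantify weak dependence by a Dobrushin-type interdependence matrix with $\norm{J}_{2\to 2}$ bounded away from $1$, upgrade to an $\mathrm{mLSI}(\rho_0)$ with $\rho_0\le C\abs{\mathcal{I}_n}$ via approximate tensorization of entropy, and then obtain the exponential entropy decay and $t_{\mathrm{mix}}=O(\abs{\mathcal{I}_n}\log\abs{\mathcal{I}_n})$ from the standard Bobkov--Tetali argument together with $\log(1/\mu_{\min})=O(\abs{\mathcal{I}_n})$; this is exactly the role of Theorems \ref{theorem:LSIforSpinSystems} and \ref{theorem:AbstractResult}, and your concluding ``soft'' steps are correct. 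However, there is a genuine gap in the main step: the approximate tensorization criterion does not run on the influence-matrix bound alone. The paper's notion of $(\alpha_1,\alpha_2)$-weak dependence has a second hypothesis, $\tilde{\beta}(\mu)\ge\alpha_1$, i.e. a uniform lower bound on the conditional probabilities $\mu((y_{S^c})_i\mid x_S)$ for \emph{arbitrary} partial conditionings $S\subsetneq\mathcal{I}$, not merely on the single-site conditionals given the full complement. For the full-support models (ERGM, vertex-weighted ERGM) this reduces to $I(\mu)$ and follows from $\abs{\partial_e H}=O(1)$, but for the hard-constraint models it is precisely where the work lies: for random colorings one must bound ratios of the form $\abs{\Omega_0(G\mid_S)}/\abs{\Omega_0(G\mid_{S\cup v_1})}$ uniformly (the paper does this with Jerrum's estimate, yielding a constant $c(\Delta)$), and for the hard-core model one needs a reduction to unconditioned marginals together with the two-sided bound $\lambda 2^{-(\Delta+1)}\le\mu(\sigma_v=1)\le\lambda/(1+\lambda)$. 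Your proposal never addresses these conditional-marginal bounds, and the remark that ``each single-site conditional dynamics has an $O(1)$ modified log-Sobolev constant'' is not a substitute, since that only controls conditionals given all other spins.

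A second, smaller issue is the ERGM influence bound, which you flag as the main obstacle but leave unresolved, and for which your plan (conditional probabilities as $\phi$ of an empirical average plus $o(1)$ corrections) is not how the estimate is actually obtained. The paper makes no mean-field approximation: it bounds $J_{fe}\le\frac14\abs{\partial_{fe}H(x)}$ by the mean value theorem for $\tanh$ applied to $\mu_n(y_e\mid\overline{y}_e)=\frac12(1+\tanh(\partial_e H(y)/2))$, and then sums over $f\ne e$ using the homomorphism count $\sum_{f\neq e} N_{G_i}(K_n,f,e)/n^{\abs{V_i}}$ (as in \cite[Lemma 9(c)]{BBS11}), which gives exactly $\norm{J}_{1\to 1}\le\frac12\Phi'_{\abs{\bb}}(1)<1$ and hence $\norm{J}_{2\to 2}\le\norm{J}_{1\to 1}$ by symmetry of $J$. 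No uniform $o(1)$-correction analysis is needed, and the condition is on $\Phi'_{\abs{\bb}}(1)$, not on $\sup\abs{\phi_{\bb}'}$. With these two points repaired (the $\tilde{\beta}$ verification for the coloring and hard-core models, and the direct second-difference bound for the ERGM), your argument becomes the paper's proof.
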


Let us put the results of Theorem \ref{theorem:GlauberDynamicsRapidMixing} into context.

Concerning the exponential random graph model, we are sure that the condition $\frac{1}{2} \Phi_{\abs{\beta}}'(1) < 1$ is not optimal. For $\beta_2, \ldots, \beta_s \ge 0$ is was proven in \cite[Theorem 5]{BBS11} that the Glauber dynamics is rapidly mixing whenever there is only one solution $a^*$ to the equation $\phi_{\bb}(a) = a$ satisfying $\phi_{\bb}'(a^*) < 1$, which is implied by our condition. 
On the other hand, Theorem \ref{theorem:GlauberDynamicsRapidMixing} implies exponential decay of the relative entropy along the Glauber semigroup and also applies to negative parameters. 
The assumption $\frac{1}{2} \Phi'_{\abs{\beta}}(1) < 1$ is also present in \cite[Theorem 6.2]{CD13}. 

An easy application shows that for the ERGM with $s = 2$ and a simple, connected graph $G_2$ on $e_2$ edges, the condition $\abs{\beta_2} < \binom{e_2}{2}^{-1}$ guarantees rapid mixing. 

Rapid mixing for the vertex-weighted exponential random graph model has been established in \cite[Theorem 7]{DEY17} under a similar condition as in \cite{BBS11}.

The Glauber dynamics for the random coloring model on a sequence of bounded-degree graphs was shown to be rapidly mixing in \cite{Jer95} for $k \ge 2\Delta + 1$ via a path coupling approach. We recover these results using the entropy method. Again, this yields exponential decay of the relative entropy along the Glauber semigroup.

It was shown in \cite{Vi01} that if $G_n = (V_n, E_n)$ is a sequence of graphs with uniformly bounded degree $\Delta$ and $\lambda < \frac{2}{\Delta-2}$, then the Glauber dynamics associated to the hard-core model with fugacity $\lambda$ is rapidly mixing. Interestingly, with methods closer to the Bakry-Emery theory and a characterization of Ricci curvature for Markov chains, \cite{EHMT17} have shown that the hard-core model with fugacity has a positive Ricci curvature under the assumption $\lambda \le \frac{1}{\Delta}$, which also implies a $\mathrm{mLSI}(\rho_0)$.

Theorem \ref{theorem:GlauberDynamicsRapidMixing} is itself an application of the following general theorem. The (rather technical) quantity $\tilde{\beta}$ and the notion of the interdependence matrix $J$ will be defined in Section \ref{section:weakdependence}, see equations \eqref{eqn:DefiInterdependence} and \eqref{eqn:DefiBetaTilde}.

\begin{thm}					\label{theorem:AbstractResult}
	Let $\mu$ be a spin system on $\mathcal{Y} \coloneqq \mathcal{X}^{\mathcal{I}}$ for finite sets $\mathcal{X}$ and $\mathcal{I}$. Assume that for some constants $\alpha_1, \alpha_2 > 0$ we have the lower bound on the conditional probabilities
	\begin{align}	\label{eqn:LowerBoundConditionalProbability}
		\tilde{\beta}(\mu) \ge \alpha_1	
	\end{align}
	and an upper bound on some interdependence matrix $J$
	\begin{align}	\label{eqn:UpperBoundInterdependenceMatrix}
		\norm{J}_{2 \to 2} \le 1 - \alpha_2.				
	\end{align}
	The Glauber dynamics associated to $\mu$ satisfies an $\mathrm{mLSI}(2 \abs{\mathcal{I}}\alpha_1^{-1} \alpha_2^{-2})$.
	As a consequence, given any $\mu$-density $f$, the density $f_t = P_t f$ ($(P_t)_{t \ge 0}$ is the Glauber semigroup) satisfies
	\begin{equation}	\label{eqn:ExponentialDecayRelativeEntropy}
		\mathrm{Ent}_\mu(P_t f) \le \mathrm{Ent}_\mu(f) \exp\left( - \frac{\alpha_1 \alpha_2^2}{2\abs{\mathcal{I}}} t \right).
	\end{equation}
	
	Furthermore, if $(\mu_n)_n$ is a sequence of spin systems on $\mathcal{Y}_n \coloneqq \mathcal{X}^{\mathcal{I}_n}$ (with $\abs{\mathcal{I}_n} \to \infty$) satisfying \eqref{eqn:LowerBoundConditionalProbability} and \eqref{eqn:UpperBoundInterdependenceMatrix} for some $n$-independent constants $\alpha_1, \alpha_2$, then the sequence of Glauber dynamics is rapidly mixing, i.e. $t_{\mathrm{mix}} = O(\abs{\mathcal{I}_n} \log \abs{\mathcal{I}_n})$.
\end{thm}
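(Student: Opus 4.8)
\emph{The plan.} I would reduce everything to one substantial inequality, an \emph{approximate tensorization of the entropy}: from the two hypotheses, derive
\[
\Ent_\mu(f)\ \le\ \frac{1}{\alpha_1\alpha_2^2}\,\sum_{i\in\mathcal{I}}\IE_\mu\big[\Ent_{\mu^i}(f)\big]\qquad(f\colon\mathcal{Y}\to\IR_+),
\]
where $\mu^i=\mu^i(\cdot\mid\overline{x}_i)$ is the law of the $i$-th spin under $\mu$ given the remaining spins (so $\Ent_{\mu^i}(f)$ is a function of $\overline{x}_i$, averaged by the outer expectation). Granting this, the first assertion is immediate: the Glauber generator is $-L=\abs{\mathcal{I}}^{-1}\sum_{i\in\mathcal{I}}(\Id-P_i)$ with $P_ig=\IE_{\mu^i}[g\mid\overline{x}_i]$ an orthogonal projection in $L^2(\mu)$, hence
\[
\mathcal{E}(f,\log f)=\frac{1}{\abs{\mathcal{I}}}\sum_{i\in\mathcal{I}}\IE_\mu\big[\Cov_{\mu^i}(f,\log f)\big]\ \ge\ \frac{1}{\abs{\mathcal{I}}}\sum_{i\in\mathcal{I}}\IE_\mu\big[\Ent_{\mu^i}(f)\big],
\]
the inequality because $\Cov_{\mu^i}(f,\log f)-\Ent_{\mu^i}(f)=\IE_{\mu^i}[f]\big(\log\IE_{\mu^i}[f]-\IE_{\mu^i}[\log f]\big)\ge 0$ by Jensen's inequality. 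Combining the two displays gives $\Ent_\mu(f)\le\abs{\mathcal{I}}\alpha_1^{-1}\alpha_2^{-2}\,\mathcal{E}(f,\log f)$, i.e. the $\mathrm{mLSI}(2\abs{\mathcal{I}}\alpha_1^{-1}\alpha_2^{-2})$.

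\emph{The main step: approximate tensorization.} This is the only place the structure of $\mu$ enters, and the step I expect to be the genuine obstacle. Fixing an enumeration $i_1,\dots,i_m$ of $\mathcal{I}$ and the filtration $\mathcal{F}_k=\sigma(x_{i_1},\dots,x_{i_k})$, the chain rule writes $\Ent_\mu(f)$ as a telescoping sum of conditional entropies along $(\mathcal{F}_k)_k$; after conditioning, the $k$-th summand is a single-site conditional entropy, but of an $\mathcal{F}_k$-conditional expectation of $f$ rather than of $f$ itself. I would estimate the resulting discrepancies by covariance inequalities in which the entries of the interdependence matrix $J$ from \eqref{eqn:DefiInterdependence} appear as the coupling coefficients between site $i_k$ and the remaining sites, and organize them into a vector inequality $v\le s+Jv$, where $v$ collects the (square roots of the) single-site contributions and $s$ is the target. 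Since $\norm{J}_{2\to2}\le 1-\alpha_2<1$ by \eqref{eqn:UpperBoundInterdependenceMatrix}, this closes via $\norm{(\Id-J)^{-1}}_{2\to2}\le(1-\norm{J}_{2\to2})^{-1}$, and passing back to the level of entropies yields the factor $(1-\norm{J}_{2\to2})^{-2}\le\alpha_2^{-2}$. The factor $\alpha_1^{-1}$ is the cost of turning the $L^2$-type quantities produced along the way into honest conditional entropies: normalising $\IE_\mu f=1$, the lower bound \eqref{eqn:LowerBoundConditionalProbability} on the single-site conditional probabilities (which is what $\tilde\beta(\mu)\ge\alpha_1$ encodes, see \eqref{eqn:DefiBetaTilde}) forces every conditional density of $f$ on a single-site fibre to be $\le\alpha_1^{-1}$, exactly what is needed to dominate the relevant fibrewise variances and covariances by $\Ent_{\mu^i}(f)$. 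The delicate point is to run this so that \emph{both} constants appear with the stated exponents rather than, say, a logarithmic loss in $\alpha_1$.

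\emph{Entropy decay and rapid mixing.} Both remaining conclusions are routine consequences. For a $\mu$-density $f\ge 0$ put $g_t=P_tf$; then $\IE_\mu g_t=\IE_\mu f$ and $\IE_\mu[Lg_t]=0$, so
\[
\frac{d}{dt}\Ent_\mu(g_t)=\IE_\mu\big[(Lg_t)(1+\log g_t)\big]=-\mathcal{E}(g_t,\log g_t)\le-\frac{2}{\rho_0}\Ent_\mu(g_t)
\]
by the $\mathrm{mLSI}(\rho_0)$ applied to $g_t$, and Gr\"onwall's lemma gives \eqref{eqn:ExponentialDecayRelativeEntropy}. For a sequence $(\mu_n)_n$ with $n$-independent $\alpha_1,\alpha_2$ one has $\rho_0\le C\abs{\mathcal{I}_n}$. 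Started from a point mass at $y$, the law $P^t(y,\cdot)$ has $\mu_n$-density $P_tf$ with $f=\eins_{\{y\}}/\mu_n(y)$ and $\Ent_{\mu_n}(f)=-\log\mu_n(y)$; joining $y$ to any other configuration by at most $\abs{\mathcal{I}_n}$ single-site changes, along each of which \eqref{eqn:LowerBoundConditionalProbability} bounds the probability ratio below by $\alpha_1$, yields $\mu_n(y)\ge(\alpha_1/\abs{\mathcal{X}})^{\abs{\mathcal{I}_n}}$ and hence $\Ent_{\mu_n}(f)\le C'\abs{\mathcal{I}_n}$. Combining \eqref{eqn:ExponentialDecayRelativeEntropy} with Pinsker's inequality,
\[
\max_{y\in\mathcal{Y}_n}d_{\mathrm{TV}}\big(P^t(y,\cdot),\mu_n\big)^2\ \le\ \tfrac12\,e^{-2t/\rho_0}\,C'\abs{\mathcal{I}_n},
\]
which falls below $e^{-2}$ as soon as $t\ge c\,\rho_0\log\abs{\mathcal{I}_n}$; since $\rho_0=O(\abs{\mathcal{I}_n})$, this is $t_{\mathrm{mix}}=O(\abs{\mathcal{I}_n}\log\abs{\mathcal{I}_n})$.
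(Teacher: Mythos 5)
Your overall architecture coincides with the paper's: approximate tensorization of the entropy with constant $(\alpha_1\alpha_2^2)^{-1}$, then the mLSI via the pointwise bound $\Ent_{\mu(\cdot\mid\overline{x}_i)}(f)\le\Cov_{\mu(\cdot\mid\overline{x}_i)}(f,\log f)$ (the paper uses the equivalent form $\Ent_\nu(e^g)\le\Cov_\nu(g,e^g)$ in the proof of Theorem \ref{theorem:LSIforSpinSystems}), then entropy decay by differentiating along the semigroup, then mixing via Pinsker plus a bound on $\log(1/\mu_n^*)$; the paper outsources the first and third of these steps to \cite{Ma15}, \cite[Theorem 4.2]{GSS18} and to \cite[Theorem 2.4, Corollary 2.8]{BT06}. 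Your identification of the Dirichlet form, the resulting constant $\rho_0=2\abs{\mathcal{I}}\alpha_1^{-1}\alpha_2^{-2}$, and the Gr\"onwall argument (which even gives the stronger rate $e^{-2t/\rho_0}$) are correct. The tensorization inequality itself you only sketch: the telescoping/vector-inequality outline is plausible, but, as you yourself flag, obtaining exactly the exponents $\alpha_1^{-1}\alpha_2^{-2}$ is the delicate point; this is precisely the result the paper imports from \cite{Ma15} and \cite{GSS18} rather than reproves, so citing it is legitimate, but your proposal as written does not establish it.

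The one concrete error is the lower bound $\mu_n(y)\ge(\alpha_1/\abs{\mathcal{X}})^{\abs{\mathcal{I}_n}}$. You obtain it by joining $y$ to an arbitrary configuration through at most $\abs{\mathcal{I}_n}$ single-site changes and applying \eqref{eqn:LowerBoundConditionalProbability} at every step; this requires each intermediate configuration to lie in $\supp\mu_n$, which fails in general for the hard-constraint models the theorem is designed to cover (proper colorings, hard-core): a single-site path between two admissible configurations need not stay in the support, nor have length $\le\abs{\mathcal{I}_n}$. The hypothesis already contains the repair: by the definition \eqref{eqn:DefiBetaTilde}, $\tilde{\beta}(\mu)\ge\alpha_1$ controls all conditional \emph{marginals} $\mu(y_i\mid y_S)$ given partial configurations $y_S$, so the chain rule $\mu_n(y)=\prod_{k}\mu_n\big(y_{i_k}\mid y_{i_{k+1}},\dots,y_{i_m}\big)\ge\alpha_1^{\abs{\mathcal{I}_n}}$ holds for every $y\in\supp\mu_n$, with no path or connectivity argument and without the $\abs{\mathcal{X}}$ factor. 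This is how the paper argues; with this substitution your Pinsker-based mixing estimate goes through unchanged and yields $t_{\mathrm{mix}}=O(\abs{\mathcal{I}_n}\log\abs{\mathcal{I}_n})$.
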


Clearly, the second part of Theorem \ref{theorem:AbstractResult} is of interest only for $\abs{\mathcal{I}_n} \to \infty$ as $n \to \infty$. In the case of spin systems without hard constraints, we can rephrase the conditions \eqref{eqn:LowerBoundConditionalProbability}, \eqref{eqn:UpperBoundInterdependenceMatrix}. Here, we define $I(\mu) \coloneqq \min_{i \in \mathcal{I}} \min_{y \in \mathcal{Y}} \mu( y_i \mid \overline{y}_i)$ as the minimal conditional probability.

\begin{cor}			\label{corollary:RapidMixingGibbsMeasure}
Let $(\mu_n)_n$ be a sequence of Gibbs measures on configuration spaces $\mathcal{Y}_n$ induced by Hamiltonians $H_n: \mathcal{Y}_n \to \IR$. If $I(\mu_n) \ge \alpha_1$ and $\norm{J_n}_{2 \to 2} \le 1-\alpha_2$ for some $\alpha_1 \in (0,1), \alpha_2 \in (0,1)$ and interdependence matrices $J_n$, then the (sequence of) Glauber dynamics associated to $\mu_n$ is rapidly mixing.
\end{cor}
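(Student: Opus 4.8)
The plan is to obtain Corollary \ref{corollary:RapidMixingGibbsMeasure} as a direct specialization of Theorem \ref{theorem:AbstractResult}; the only substantive point is to translate the hypothesis $I(\mu_n)\ge\alpha_1$ into the abstract lower bound \eqref{eqn:LowerBoundConditionalProbability} on $\tilde\beta$. Concretely, I would first unravel the definition \eqref{eqn:DefiBetaTilde} of $\tilde\beta(\mu)$ in the absence of hard constraints, i.e. when $\supp\mu=\CX^{\CI}$. In this case every single-site conditional law $\mu(\cdot\mid\overline y_i)$ is fully supported on $\CX$, and one checks from \eqref{eqn:DefiBetaTilde} that $\tilde\beta(\mu)$ is bounded below by a fixed positive function of the minimal conditional probability $I(\mu)=\min_{i\in\CI}\min_{y\in\CY}\mu(y_i\mid\overline y_i)$ -- indeed the quantity $\tilde\beta$ is introduced precisely so as to remain meaningful for constrained models, and collapses to (a constant multiple of) $I(\mu)$ when $\mu$ has full support. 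Hence $I(\mu_n)\ge\alpha_1$ yields \eqref{eqn:LowerBoundConditionalProbability} with an $n$-independent constant $\alpha_1'>0$.

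The other hypothesis \eqref{eqn:UpperBoundInterdependenceMatrix} is literally the assumption $\norm{J_n}_{2\to2}\le1-\alpha_2$, with $\alpha_2\in(0,1)$ independent of $n$. Thus for every $n$ the spin system $\mu_n$ on $\CY_n=\CX^{\CI_n}$ satisfies both \eqref{eqn:LowerBoundConditionalProbability} and \eqref{eqn:UpperBoundInterdependenceMatrix} with constants not depending on $n$, and $\abs{\CI_n}\to\infty$. Invoking the second part of Theorem \ref{theorem:AbstractResult} then gives $t_{\mathrm{mix}}=O(\abs{\CI_n}\log\abs{\CI_n})$, i.e. the sequence of Glauber dynamics is rapidly mixing. (As a byproduct the first part also supplies the quantitative $\mathrm{mLSI}(2\abs{\CI_n}(\alpha_1')^{-1}\alpha_2^{-2})$ and the exponential entropy decay \eqref{eqn:ExponentialDecayRelativeEntropy}, though these are stronger than what is claimed.)

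The only genuinely non-formal step is the comparison between $\tilde\beta(\mu)$ and $I(\mu)$ for unconstrained Gibbs measures, and this is where I expect the (minor) obstacle to lie: one has to make sure that the somewhat technical normalization in \eqref{eqn:DefiBetaTilde}, which is designed to also handle models with forbidden configurations, does not degrade the constant -- beyond an absolute factor, or at worst a harmless logarithmic loss in $I(\mu)$ -- once $\supp\mu=\CX^{\CI}$. Since $I(\mu_n)$ is by assumption bounded below uniformly in $n$, even such a logarithmic loss is immaterial here, so no coupling, no model-specific estimate, and no $n$-dependence enters the argument.
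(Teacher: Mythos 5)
Your route is exactly the paper's route: specialize Theorem \ref{theorem:AbstractResult}, using that the hypothesis on $\norm{J_n}_{2\to 2}$ is verbatim \eqref{eqn:UpperBoundInterdependenceMatrix} and that $I(\mu_n)\ge\alpha_1$ should give \eqref{eqn:LowerBoundConditionalProbability}. The only caveat is that the step you defer (``one checks from \eqref{eqn:DefiBetaTilde} \dots'') is in fact the entire content of the paper's proof, and your hedging about an absolute factor or a logarithmic loss is unnecessary: for a fully supported $\mu$ one has the exact identity $\tilde{\beta}(\mu)=I(\mu)$, with no degradation of the constant. Indeed, for any $S\subsetneq\CI$, $i\notin S$ and any $x_S\in\CX^S$, decomposing over the remaining coordinates gives
\begin{align*}
\mu(y_i \mid x_S) \;=\; \sum_{z\in\CX^{\CI\setminus(S\cup\{i\})}} \mu\bigl(y_i \mid x_S, z\bigr)\,\mu\bigl(z \mid x_S\bigr) \;\ge\; I(\mu),
\end{align*}
since each $\mu(y_i\mid x_S,z)$ is a single-site conditional probability given all other coordinates; hence $\tilde{\beta}(\mu)\ge I(\mu)$, and taking $S=\CI\setminus\{i\}$ in \eqref{eqn:DefiBetaTilde} yields the reverse inequality. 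With this one-line conditioning argument inserted in place of ``one checks,'' your proposal coincides with the paper's proof: both hypotheses of Theorem \ref{theorem:AbstractResult} hold with $n$-independent constants, and its second part gives $t_{\mathrm{mix}}=O(\abs{\CI_n}\log\abs{\CI_n})$.
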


\section{Concentration of measure}	\label{section:CoM}
Informally, as stated in \cite{Tal96b}, the \emph{concentration of measure phenomenon} can be described as the phenomenon that a function of $n$ i.i.d. random variables $X_1, \ldots, X_n$ tends to be very close to a deterministic quantity (e.g. its expected value or median), if it is not too sensitive to one of its parameters. The function is usually assumed to be Lipschitz continuous in some sense, depending on a suitably adapted notion of a gradient. In other words, the distribution of any Lipschitz function of independent random variables shows strong (more precisely: sub-Gaussian) concentration properties. For an introduction to the concentration of measure phenomenon and functional inequalities we refer to  the two monographs \cite{Led01} and \cite{BLM13} and the lecture notes \cite{vH16}.

It is also known that the restriction to Lipschitz functions is not necessary, if one aims at (sub-)exponential tails. Already the early works of \cite{Bo68, Bo70} prove $L^p$ norm estimates for polynomials of degree $d$ in Rademacher random variables which grow like $p^{d/2}$ (which can be translated into tail estimates). In the setting of independent sub-Gaussian random variables the Hanson--Wright inequality (see \cite{HW71, W73, RV13}) gives estimates for quadratic forms. In these cases, exponential tail decay holds even though the Lipschitz condition is not satisfied, which might be regarded as an extension of the concentration of measure phenomenon beyond the setting of Lipschitz-type functions. This idea has been developed in many different works, such as \cite{Vu02, BBLM05, La06, Ad06, SS12a, Wo13, AW15} among others. 

The concentration of measure results in this article employ the entropy method. To recall some notions, let $(\mathcal{Y}, \mathcal{A},\mu)$ be a probability space. An operator $\Gamma: L^\infty(\mu) \to L^\infty(\mu)$ is called a \emph{difference operator} if $\abs{\Gamma(af+b)} = a \abs{\Gamma(f)}$ for all $a > 0, b \in \IR$. We say that $\mu$ satisfies a logarithmic Sobolev inequality with respect to $\Gamma$ (or in short: a $\Gamma\textrm{-LSI}(\sigma^2)$), if for all bounded and measurable functions $f: \mathcal{Y} \to \IR$ we have
\begin{align}    \label{eqn:generalLSI}
  \Ent_\mu(f^2) \le 2 \sigma^2 \int \Gamma(f)^2 d\mu.
\end{align}
Here, $\Ent_\mu(f)$ is the entropy functional. The smallest $\sigma^2 > 0$ such that \eqref{eqn:generalLSI} holds is known as the logarithmic Sobolev constant.
From the two properties of a difference operator one can infer that a $\Gamma\textrm{-LSI}(\sigma^2)$ implies a \emph{Poincar{\'e} inequality}
\begin{align}    \label{eqn:generalPI}
  \Var_{\mu}(f) \le \sigma^2 \int \Gamma(f)^2 d\mu.
\end{align}
To any function $f$ and $i \in \mathcal{I}$ we associate the ``local variance'' in the $i$-th coordinate
\begin{align}
  \partial_i f(x)^2 =
  \begin{cases}
    \frac{1}{2} \iint \left( f(\overline{x}_i, y) - f(\overline{x}_i, y') \right)^2 d\mu(y \mid \overline{x}_i) d\mu(y' \mid \overline{x}_i) & \text{for } \overline{\mu}_i(\overline{x}_i) > 0 \\
    0                                                                                                                                        & \text{otherwise}.
  \end{cases}
\end{align}
Here, $\overline{x}_i = (x_j)_{j \in \mathcal{I}\backslash \{i\}}$ is a generic vector in $\mathcal{X}^{\mathcal{I} \backslash \{i\}}$, $\mu(\cdot \mid \overline{x}_i)$ denotes the conditional probability interpreted as a measure on $\mathcal{X}$, and $\overline{\mu}_i$ is the marginal measure on $\mathcal{X}^{\mathcal{I} \backslash \{i \}}$. More generally, for any $S \subset \mathcal{I}$ we write $\overline{x}_S, \mu(\cdot \mid \overline{x}_S), \overline{\mu}_S$ for the obvious analogues.

One type of difference operator is given by $\abs{\partial f} = (\sum_{i \in \mathcal{I}} (\partial_i f)^2)^{1/2}$, where $\abs{\cdot}$ is be the Euclidean norm of a vector. Indeed, if \eqref{eqn:generalLSI} holds for $\Gamma(f) = \abs{\partial f}$, we say that $\mu$ satisfies a $\partial\textrm{-LSI}(\sigma^2)$. A second type of difference operator is given by $\abs{\mathfrak{h}f} = \left( \sum_{i \in \mathcal{I}} (\mathfrak{h}_i f)^2\right)^{1/2}$
for
\begin{align}
  \mathfrak{h}_i f(x) = \norm{f(\overline{x}_i,y) - f(\overline{x}_i, y')}_{L^\infty(\mu(\overline{x}_i, \cdot) \otimes \mu(\overline{x}_i,\cdot))}.
\end{align}
It is easy to see that if $\mu$ satisfies a $\partial$-$\mathrm{LSI}(\sigma^2)$, then it also satisfies an $\mathfrak{h}$-$\mathrm{LSI}(\sigma^2/2)$.

As a short remark, let us note that the definition of a $\partial\mathrm{-LSI}(\sigma^2)$ is consistent with the definition using the Dirichlet form as in Section \ref{section:mixingtimes}.

Similarly to Theorem \ref{theorem:GlauberDynamicsRapidMixing}, the following theorem provides $\partial$-LSIs for all the models from Section \ref{subsection:themodels}.

\begin{thm}\label{theorem:LSIforAllModels}
For the following models the spin system $\mu$ satisfies a $\partial\mathrm{-LSI}(\sigma^2)$, where $\sigma^2$ may depend on the parameters of each model but not on $n$.
\begin{enumerate}
\item Any exponential random graph model $\mu_{\bb}$ such that $\frac{1}{2} \Phi'_{\abs{\bb}}(1) < 1$.
\item The vertex-weighted ERGM for $\beta \coloneqq (\beta_1, \beta_2, p)$ satisfying $\sup_{x \in (0,1)} \abs{\phi_\beta'(x)} < 1$.
\item The random coloring model $\mu_{G_n}$ for any sequence of graphs $G_n = (V_n,E_n)$ with uniformly bounded maximum degree $\Delta$ and $k \ge 2\Delta + 1$.
\item The hard-core model with fugacity $\lambda$ on any sequence of graphs $G_n = (V_n, E_n)$ with bounded maximum degree $\Delta$ and $\lambda < \frac{1}{\Delta-1}$.
\end{enumerate}
\end{thm}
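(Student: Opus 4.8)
The plan is to deduce Theorem~\ref{theorem:LSIforAllModels} from a single model-independent criterion — the $\partial$-LSI counterpart of Theorem~\ref{theorem:AbstractResult} — and then to check its hypotheses separately for the four models. Concretely, I would first establish that whenever a spin system $\mu$ on $\mathcal{X}^{\mathcal{I}}$ satisfies the weak-dependence conditions \eqref{eqn:LowerBoundConditionalProbability} and \eqref{eqn:UpperBoundInterdependenceMatrix}, i.e.\ $\tilde\beta(\mu) \ge \alpha_1$ and $\norm{J}_{2\to2} \le 1-\alpha_2$, it satisfies a $\partial\mathrm{-LSI}(\sigma^2)$ with $\sigma^2 = \sigma^2(\alpha_1,\alpha_2)$ \emph{independent of} $\abs{\mathcal{I}}$. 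This dimension-free constant is exactly what separates the present statement from Theorem~\ref{theorem:AbstractResult}: there the mLSI constant carries the factor $\abs{\mathcal{I}}$ coming from the $\abs{\mathcal{I}}^{-1}$ averaging in the Glauber Dirichlet form, whereas $\int \abs{\partial f}^2 \, d\mu = \sum_{i \in \mathcal{I}} \IE_\mu\big[\Var_{\mu(\cdot\mid\overline{x}_i)}(f)\big]$ already sums over all sites, so the $\abs{\mathcal{I}}$-dependence cancels (this is also the content of the remark on consistency of the two definitions of a $\partial\mathrm{-LSI}$).

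\textbf{The general criterion, and the main obstacle.} For the criterion I would use approximate tensorization of entropy: expand $\Ent_\mu(f^2)$ either along a fixed ordering of $\mathcal{I}$ as a telescoping martingale sum, or symmetrically as $\sum_{i \in \mathcal{I}} \IE_\mu\big[\Ent_{\mu(\cdot\mid\overline{x}_i)}(f^2)\big]$ plus a remainder; bound each single-site conditional entropy by the elementary LSI on the finite set $\mathcal{X}$, whose constant is governed by the minimal admissible conditional mass — this is precisely where $\tilde\beta(\mu)$, rather than the cruder $I(\mu)$, is needed, so that the possibly degenerate conditionals of the hard-constraint models are still handled; and finally absorb the cross terms using the spectral bound $\norm{J}_{2\to2} \le 1-\alpha_2$, which lets one sum a Neumann-type series of ratio $\norm{J}_{2\to2} < 1$. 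I expect this step to require the most care, both in setting up the remainder estimate and in tracking that every constant depends on $\alpha_1,\alpha_2$ only.

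\textbf{Verification for the four models.} With the criterion in hand, the theorem reduces to checking \eqref{eqn:LowerBoundConditionalProbability}–\eqref{eqn:UpperBoundInterdependenceMatrix} with $n$-free constants, which is carried out in Section~\ref{section:weakdependence}. For the ERGM the conditional law of a single edge is Bernoulli with success probability lying in a compact subinterval of $(0,1)$ depending only on $\bb$ and $G_1,\ldots,G_s$ (yielding $\alpha_1$), and a discrete-derivative computation gives $\norm{J}_{2\to2} \le \tfrac12 \Phi'_{\abs{\bb}}(1) < 1$ (yielding $\alpha_2$); the vertex-weighted ERGM is analogous with $\sup_{x\in(0,1)}\abs{\phi_\beta'(x)} < 1$ replacing the latter bound. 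For the random coloring model with $k \ge 2\Delta+1$, conditioning a vertex on its neighborhood leaves at least $k-\Delta \ge \Delta+1$ admissible colors, so every conditional probability is $\ge 1/k$, while $J$ has row and column sums at most $\Delta/(k-\Delta) < 1$; for the hard-core model with $\lambda < 1/(\Delta-1)$ the single-site conditional is either deterministic or a $\mathrm{Bernoulli}(\lambda/(1+\lambda))$ law, bounded below accordingly, and $J$ has entries at most $\lambda/(1+\lambda)$ supported on the edges of $G_n$, so $\norm{J}_{2\to2} \le \Delta\lambda/(1+\lambda) < 1$.

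\textbf{Remaining delicate point.} Beyond the general criterion, the genuinely subtle cases are the two hard-constraint models: one must define the interdependence matrix so that it records only the effective dependence propagated through the constraint graph, and verify that its operator norm drops below $1$ exactly in the stated parameter windows; everything else is routine bounded-degree combinatorics together with the elementary finite-space LSI.
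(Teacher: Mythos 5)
Your overall architecture is the same as the paper's: a model-independent criterion stating that $(\alpha_1,\alpha_2)$-weak dependence (i.e.\ $\tilde{\beta}(\mu)\ge\alpha_1$ and $\norm{J}_{2\to 2}\le 1-\alpha_2$) implies a $\partial$-LSI with a dimension-free constant is exactly Theorem~\ref{theorem:LSIforSpinSystems}, obtained there by combining approximate tensorization of entropy (cited from Marton and \cite[Theorem 4.2]{GSS18}) with the elementary finite-space LSI of \cite[Remark 6.6]{BT06}, and the four models are then checked in Propositions~\ref{proposition:LSIinERGM}--\ref{proposition:LSIinhardcore}. Your sketches for the ERGM and the vertex-weighted ERGM also match the paper (including the route $\norm{J}_{2\to2}\le\norm{J}_{1\to1}$ by symmetry).

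The genuine gap is in the verification of $\tilde{\beta}(\mu)\ge\alpha_1$ for the two hard-constraint models, and you have in fact misplaced the delicate point. By definition \eqref{eqn:DefiBetaTilde}, $\tilde{\beta}$ requires a uniform lower bound on $\mu((y_{S^c})_i\mid x_S)$ for \emph{every} subset $S\subsetneq\mathcal{I}$, not just $S=\mathcal{I}\setminus\{i\}$; your bounds (``at least $k-\Delta$ admissible colors, so every conditional probability is $\ge 1/k$'' for colorings, ``deterministic or $\mathrm{Bernoulli}(\lambda/(1+\lambda))$'' for hard-core) only describe the fully conditioned single-site laws, i.e.\ they bound $I(\mu)$, which is not enough precisely because these measures do not have full support. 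Under partial conditioning the single-site law is a ratio of counts of admissible configurations of induced subgraphs: for the coloring model the paper must invoke Jerrum's inequality to compare numbers of proper colorings, yielding the lower bound $k^{-1}\bigl(\tfrac{\Delta+1}{\Delta+2}\bigr)^{\Delta}$, which is strictly smaller than your claimed $1/k$; for the hard-core model one reduces to an unconditioned marginal on a subgraph and needs the partition-function comparison \eqref{eqn:UpperLowerBound}, giving $\alpha_1$ of order $\min\bigl(\lambda 2^{-(\Delta+1)},(1+\lambda)^{-1}\bigr)$ rather than $\lambda/(1+\lambda)$. By contrast, the interdependence matrices you flag as the subtle issue are the easy part (one-line total-variation computations with nearest-neighbour support, exactly as you describe). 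So the missing ingredient is the argument lower-bounding the partially conditioned marginals for models (3) and (4); without it, the hypothesis $\tilde{\beta}(\mu)\ge\alpha_1$ of your own criterion is not established, and the step as written would fail.
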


\subsection{Triangle counts in the exponential random graph model} \label{subsection:triangleERGM}
The question of the distribution of the number of triangles in the Erd{\"o}s--R{\'e}nyi model is quite classical and well-studied. Therefore, it is an interesting task to find analogous results for the exponential random graph model. Although the edges in this model are dependent, a weak dependence condition should suffice to expect a similar behavior as in the case of independent edges (for example, see the large deviation results in \cite{CD13}, which in certain cases implies that an exponential random graph model is indistinguishable from an Erd{\"o}s--R{\'e}nyi model in the limit). Although large deviation results are purely asymptotic, one can still hope for similar behavior concerning certain statistics for finite $n$. \par
Recall that the exponential random graph model is a spin system with sites $\mathcal{I}_n \coloneqq \{ (i,j) \in \{1,\ldots, n\}^2 : i < j \}$. We let $\binom{\mathcal{I}_n}{3}$ be the set of all possibilities of choosing three distinct edges and
\begin{align*}
  \mathcal{T}_n \coloneqq \left\lbrace \{ e,f,g \} \in \binom{\mathcal{I}_n}{3} : e,f,g \text{ form a triangle} \right\rbrace.
\end{align*}
We define the number of triangles
  $T_3(x) \coloneqq \sum_{\{e_1, e_2, e_3\} \in \mathcal{T}_n} x_{e_1} x_{e_2} x_{e_3}.$
Our first result are multilevel concentration inequalities for $T_3$ and a ``linear approximation'' thereof. Define $\mu_2 \coloneqq \IE_{\mu_{\bb}} x_e x_f$ (for some edges $e \neq f \in \mathcal{I}_n$, $e \cap f \neq \emptyset$) and $f_1 \coloneqq \sum_{e \in \mathcal{I}_n} (x_e - \IE_{\mu_{\bb}}(x_e))$. From the definition of the ERGM it is clear that $\mu_2$ is well-defined.

\begin{thm}              \label{theorem:ERGMtriangleLpandTails}
  Let $\mu_{\bb}$ be an ERGM satisfying a $\partial$-LSI$(\sigma^2)$. There exists a constant $C = C(\sigma^2) > 0$ such that for all $t > 0$ we have the multilevel concentration bounds
  \begin{align}  \label{eqn:T3fluctuations}
    &\mu_{\bb}(\abs{{T}_3 - \IE_{\mu_{\bb}}T_3} \ge t) \le 2 \exp \left( - \frac{1}{C} \min \left( \Big( \frac{t}{n^{3/2}} \Big)^{2/3}, \frac{t}{\mu_1 n^{3/2}}, \Big( \frac{t}{\mu_2 n^2} \Big)^2 \right) \right) \\
  \label{eqn:T3minusf1fluctuations}
    &\mu_{\bb} \bigg( \abs{T_3 - \IE_{\mu_{\bb}}T_3 - (n-2)\mu_2 f_1} \ge t \bigg) \le 2\exp \left( - \frac{1}{C} \min \left( \Big( \frac{t}{n^{3/2}}\Big)^{2/3}, \frac{t}{\mu_1 n^{3/2}} \right) \right).
  \end{align}
\end{thm}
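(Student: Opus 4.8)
The plan is to read off both inequalities from the general higher-order concentration bound for functions of bounded degree under a $\partial$-LSI, namely Theorem~\ref{theorem:fdPolynomials} (built on Theorem~\ref{theorem:com}), applied to $f = T_3$. The point of departure is that $T_3(x)=\sum_{\{e_1,e_2,e_3\}\in\mathcal{T}_n}x_{e_1}x_{e_2}x_{e_3}$ is multilinear of degree $3$ in the edge variables $x_e\in\{0,1\}$, so all its iterated difference operators of order $\ge 4$ vanish; hence Theorem~\ref{theorem:fdPolynomials} applies with $d=3$ and yields a bound of the form $2\exp\bigl(-\tfrac1C\min_{1\le k\le 3}(t/A_k)^{2/k}\bigr)$, up to additional terms indexed by the nontrivial partitions of $\{1,\dots,k\}$ that I address at the end. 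Here $A_k$ is, up to constants, the Hilbert--Schmidt norm of the $k$-th order difference tensor of $T_3$, the lower-order tensors being taken in expectation with respect to $\mu_{\bb}$. Thus the whole argument reduces to writing down these tensors and estimating their norms.

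For an edge $e=\{i,j\}$, the first difference $T_3(\overline{x}_e,1)-T_3(\overline{x}_e,0)$ equals $D_e(x)\coloneqq\sum_{k\notin\{i,j\}}x_{\{i,k\}}x_{\{j,k\}}$, the number of common neighbours of $i$ and $j$ in $x$; differencing again in an adjacent edge $f$ gives $x_g\,\eins[\{e,f,g\}\in\mathcal{T}_n]$ with $g$ the unique edge completing $e,f$ to a triangle (and $0$ if $e,f$ are not adjacent); the third difference is the deterministic triangle indicator $\eins[\{e,f,h\}\in\mathcal{T}_n]$. Since $\mu_{\bb}$ is invariant under relabeling the vertices, $\mu_1\coloneqq\IE_{\mu_{\bb}}x_e$ and $\mu_2=\IE_{\mu_{\bb}}x_ex_f$ depend only on whether $e,f$ are adjacent, so in expectation the first-order tensor is the constant vector with entries $(n-2)\mu_2$, the second-order tensor is $\mu_1\,\eins[e,f\text{ adjacent}]$, and the third is as above. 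Counting the $\Theta(n^2)$ edges, the $\Theta(n^3)$ ordered adjacent pairs of edges, and the $\Theta(n^3)$ ordered edge-triples forming a triangle, the Hilbert--Schmidt norms are $A_1\asymp\mu_2n^2$, $A_2\asymp\mu_1n^{3/2}$ and $A_3\asymp n^{3/2}$, and $(t/A_k)^{2/k}$ for $k=1,2,3$ is exactly the triple minimum in \eqref{eqn:T3fluctuations}. For \eqref{eqn:T3minusf1fluctuations} one applies Theorem~\ref{theorem:fdPolynomials} instead to $g\coloneqq T_3-\IE_{\mu_{\bb}}T_3-(n-2)\mu_2f_1$: subtracting the constant changes nothing, while $(n-2)\mu_2f_1=\sum_e(n-2)\mu_2(x_e-\IE_{\mu_{\bb}}x_e)$ is linear with difference $(n-2)\mu_2=\IE_{\mu_{\bb}}[T_3(\overline{x}_e,1)-T_3(\overline{x}_e,0)]$ in every coordinate, so it cancels exactly the first-order contribution; the term $(t/(\mu_2n^2))^2$ disappears, and since the second- and third-order difference tensors of $g$ coincide with those of $T_3$, the constants $A_2,A_3$ are unchanged, giving \eqref{eqn:T3minusf1fluctuations}.

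The part needing care is the bookkeeping postponed above: Theorem~\ref{theorem:fdPolynomials} also involves, for each order $k$, the operator-type off-diagonal partition norms of the expected difference tensors, and one must verify that these do not enlarge the minimum. For $k=2$ the matrix in question is $\mu_1$ times the adjacency matrix of the line graph $L(K_n)$, which is $(2n-4)$-regular, hence has spectral norm $\asymp\mu_1n$; the resulting term $(t/(\mu_1n))^2$ is never smaller than $t/(\mu_1n^{3/2})$ in the range of $t$ where the bound is nontrivial, so it drops out. For $k=3$, since any two adjacent edges lie in at most one triangle, all nontrivial partition norms of the triangle indicator are $O(\sqrt n)$, and the associated terms are dominated by $(t/n^{3/2})^{2/3}$ except for very small $t$, which is absorbed into $C$. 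A last, cosmetic point: the hypothesis is a $\partial$-LSI, so strictly speaking the relevant object is the local-variance operator $\partial_e T_3=\sqrt{\mu(x_e=1\mid\overline{x}_e)\,(1-\mu(x_e=1\mid\overline{x}_e))}\,D_e$ rather than $D_e$ itself; as the prefactor lies in $[0,\tfrac12]$, this only affects the constant $C=C(\sigma^2)$.
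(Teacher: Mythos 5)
Your strategy is in the right spirit (degree-$3$ multilinearity plus the LSI-based higher-order bounds, with $(n-2)\mu_2 f_1$ removing the first-order part), but as written it leans on a result the paper does not provide, and the step you call ``exact cancellation'' fails for the quantities that the paper's theorems actually control. Theorem \ref{theorem:fdPolynomials} applies only to the special centered polynomials $f_{d,A}$ of \eqref{eqn:fdApolynomial}, not to $T_3$ itself, and its bound involves only the Hilbert--Schmidt norm $\norm{A}_2$; neither it nor Theorem \ref{theorem:com} contains the ``partition/operator norms of the \emph{expected} difference tensors'' that you invoke. What you are describing is an Adamczak--Wolff-type inequality, which the paper explicitly declines to prove (see the remark after Theorem \ref{theorem:fdPolynomials}). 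The theorem that does apply to a general $f$ is Theorem \ref{theorem:com}, but there the lower-order terms are $\norm{\mathfrak{h}^{(k)}f}_2$, i.e.\ $L^2(\mu_{\bb})$-norms of the \emph{random} difference tensors, not norms of their expectations. For $g \coloneqq T_3-\IE_{\mu_{\bb}}T_3-(n-2)\mu_2 f_1$ one has $\mathfrak{h}_e g=\abs{D_e-(n-2)\mu_2}$ with $D_e$ your codegree count, so $\norm{\mathfrak{h}^{(1)}g}_2^2=\sum_e\Var_{\mu_{\bb}}(D_e)$, which does not vanish (in the Erd\H{o}s--R\'enyi case it is of order $n^3$): subtracting the linear term kills the expectation of the first-order tensor but not its $L^2$-norm. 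Hence with Theorem \ref{theorem:com} the Gaussian-type term does not simply ``disappear'' from \eqref{eqn:T3minusf1fluctuations}; one could still rescue the statement by checking that this extra term $\asymp t^2/n^3$ never attains the minimum in the regime where the bound is nontrivial (and by controlling $\Var_{\mu_{\bb}}(D_e)$, e.g.\ via the Poincar\'e inequality), but your argument does not do this, and your closing paragraph instead controls partition norms that play no role in the paper's framework.

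The missing ingredient is the algebraic decomposition \eqref{eqn:T3decomposition}: $T_3-\IE_{\mu_{\bb}} T_3=f_{3,A_3}+\mu_1 f_{2,A_2}+(n-2)\mu_2 f_{1,A_1}$ with \emph{deterministic} tensors $(A_3)_{efg}=\tfrac16\eins_{\{e,f,g\}\in\mathcal{T}_n}$, $(A_2)_{ef}=\tfrac{\mu_1}{2}\eins_{e\cap f\neq\emptyset}$, $(A_1)_e=(n-2)\mu_2$, whose Hilbert--Schmidt norms are $\sim n^{3/2}$, $\sim\mu_1 n^{3/2}$ and $\sim\mu_2 n^2$. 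The paper applies Theorem \ref{theorem:fdPolynomials} to each summand separately, adds the resulting $L^p$ bounds, and converts to tails as in the proof of Theorem \ref{theorem:com}; subtracting $(n-2)\mu_2 f_1$ then removes the $d=1$ summand \emph{identically}, which is the correct mechanism behind \eqref{eqn:T3minusf1fluctuations} and explains the exact $\mu_1,\mu_2$ factors in the statement. Your computations of the iterated differences and the counting of edges, adjacent pairs and triangles are correct and would slot directly into this decomposition; what is missing is the decomposition itself, which is the step that makes the cited theorems applicable.
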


It is interesting to note the effect of subtracting the random variable $(n-2) \mu_2 f_1$. As the variance of $T_3$ is of order $n^4$, a normalization by $n^{-2}$ is necessary to obtain a stable variance, and inequality \eqref{eqn:T3fluctuations} gives suitable tail estimates. However, the random variable $T_3 - \IE_{\mu_{\bb}} T_3 - (n-2) \mu_2 f_1$ concentrates on a narrower range, since the variance is of order $n^{3}$, and equation \eqref{eqn:T3minusf1fluctuations} yields stretched-exponential tails in this case. 
In the Erd{\"o}s--R{\'e}nyi model, a short calculation shows
\begin{align*}
  \Var\left( T_3 \right)                & = \binom{n}{3} p^3(1-p^3) + \frac{1}{2} n(n-1)(n-2)(n-3) p^5(1-p) \\
  \Var\left( T_3 - (n-2)p^2 f_1 \right) & = \binom{n}{3}p^3(1-p^3).
\end{align*}
To complement these observations, inspecting \eqref{eqn:T3fluctuations}, we see that the normalization $n^{-2}$ corresponds to the factor $n^{-4}$ in the Gaussian part, whereas the exponential and stretched-exponential part require a normalization of $n^{-3/2}$ only.

\begin{figure}[!ht]
  \centering
  \begin{subfigure}[c]{0.49\textwidth}
    \includegraphics[width=\textwidth]{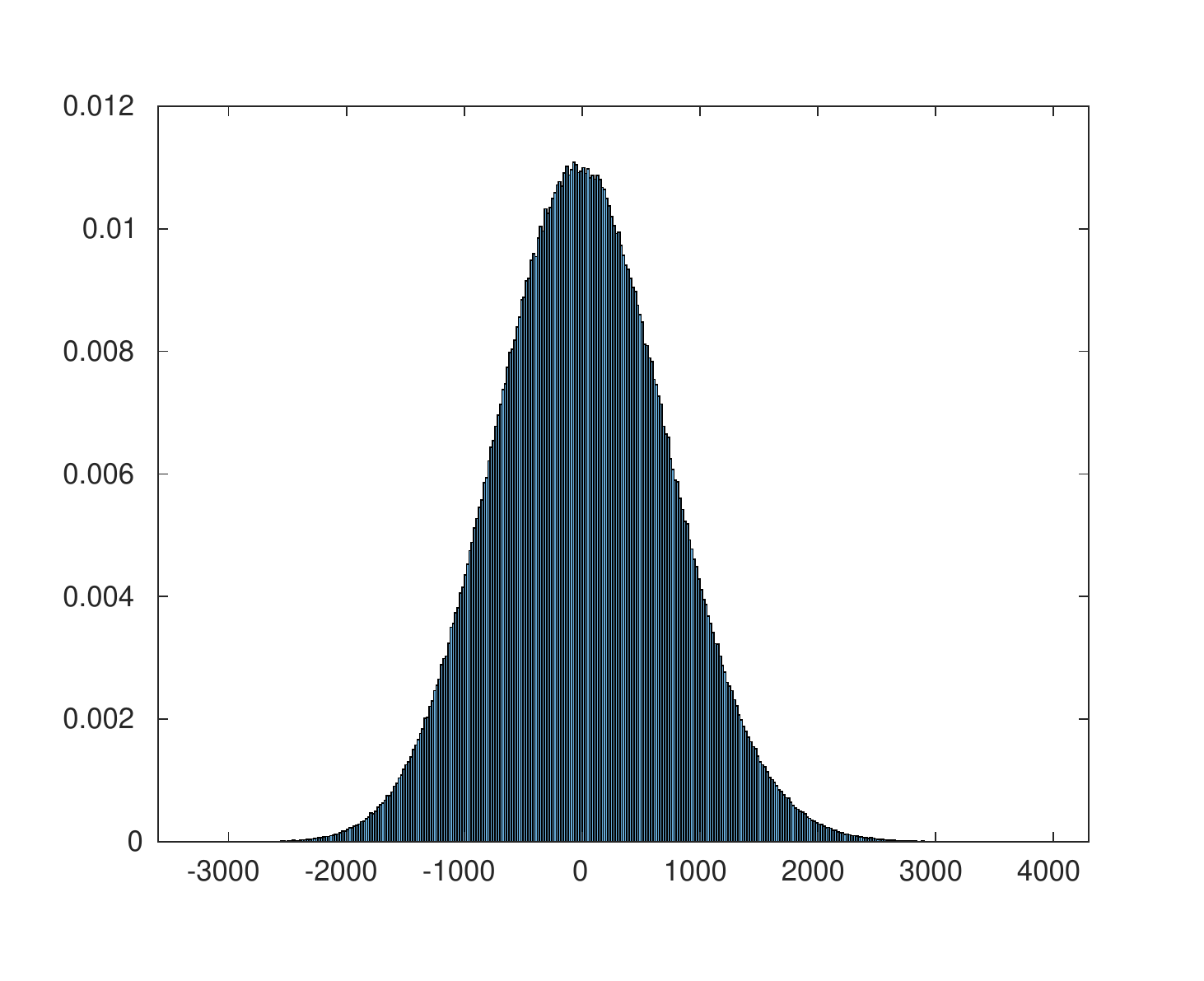}
  \end{subfigure}
  \begin{subfigure}[c]{0.49\textwidth}
    \includegraphics[width=\textwidth]{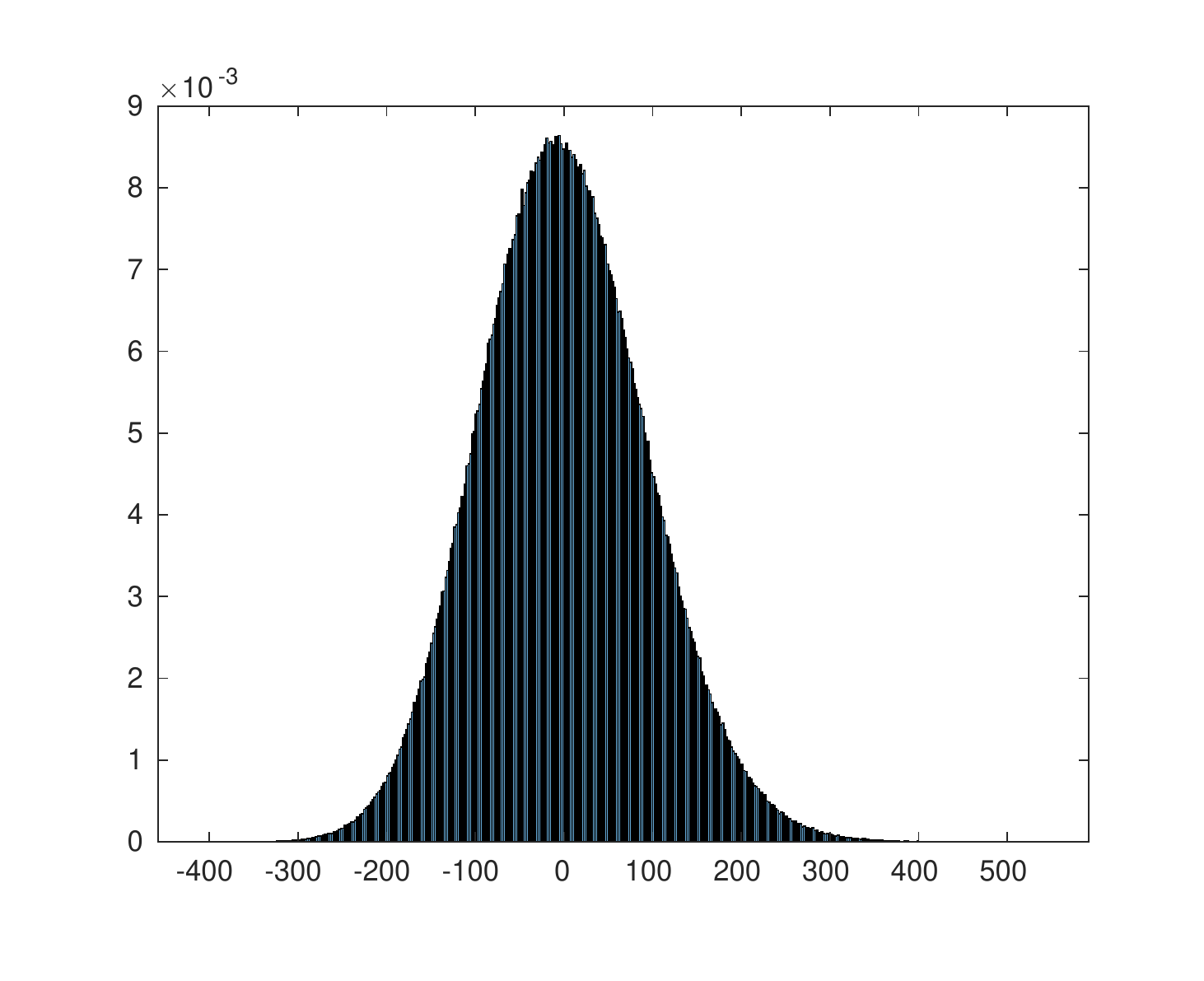}
  \end{subfigure}
  \caption{A comparison of the distributions of $T_3 - \mu_{\bb}(T_3)$ (left) and $T_3 - \mu_{\bb}(T_3) - (n-2)\mu_2 f_1$ (right) for $n = 100$, $\beta_1 = -0.1, \beta_2 = 0.05$ and $G_1 = K_2$ (an edge), $G_2 = K_3$ (a triangle) using the Glauber dynamics and roughly 2 million simulations.}
\end{figure}

The inequality \eqref{eqn:T3minusf1fluctuations} shows that $T_3$ fluctuates around the linear term $f_1$ on a lower order. This leads to the idea of mimicking the method of H{\'a}jek projection to deduce a central limit theorem for the triangle count from a CLT for the edge count. As far as we are aware, there are hardly any theoretical results on the distributional limits of the subgraph counts as $n \to \infty$ barring certain special cases. (One such example is the edge two-star model, which can also be interpreted as an Ising model, see \cite{Muk13a, Muk13b}.) 

\begin{cor}      \label{corollary:CLTERGM}
  Let $\mu_{\bb}$ be an ERGM satisfying a $\partial$-LSI$(\sigma^2)$. Assuming the central limit theorem $\binom{n}{2}^{-1/2} \sum_{e \in \mathcal{I}_n} (x_e - \IE_{\mu_{\bb}} x_e) \Rightarrow \mathcal{N}(0,v^2)$, we can infer
  \[
    \frac{T_3 - \mu_{\bb}(T_3)}{(n-2)\mu_2 \sqrt{\binom{n}{2}}} \Rightarrow \mathcal{N}(0,v^2).
  \]
\end{cor}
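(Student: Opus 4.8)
The plan is to transfer the central limit theorem from the edge count to the triangle count by a Slutsky-type argument, using the linear approximation \eqref{eqn:T3minusf1fluctuations} to show that $T_3$ and its linearization $(n-2)\mu_2 f_1$ differ only on a negligible scale. Put $a_n \coloneqq (n-2)\mu_2\sqrt{\binom{n}{2}}$ and
\[
  R_n \coloneqq T_3 - \mu_{\bb}(T_3) - (n-2)\mu_2 f_1 ,
\]
so that dividing the exact identity $T_3 - \mu_{\bb}(T_3) = (n-2)\mu_2 f_1 + R_n$ by $a_n$ gives
\[
  \frac{T_3 - \mu_{\bb}(T_3)}{a_n} \;=\; \frac{f_1}{\sqrt{\binom{n}{2}}} \;+\; \frac{R_n}{a_n}.
\]
Here $\mu_2 = \IE_{\mu_{\bb}} x_e x_f$ is a fixed constant in $(0,1)$ — positive since the Gibbs measure has full support, and well defined (independent of the adjacent pair $e,f$) by the vertex-permutation symmetry of $H_{\bb}$ — so $a_n$ is of order $n^2$. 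By assumption the first summand on the right-hand side converges weakly to $\mathcal{N}(0,v^2)$, hence it suffices to prove $R_n/a_n \to 0$ in probability and then invoke Slutsky's theorem.

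For the remainder, fix $\epsilon > 0$ and apply \eqref{eqn:T3minusf1fluctuations} with $t = \epsilon a_n$. Since $a_n$ is of order $n^2$ while the scale governing that bound is $n^{3/2}$, we have $t/n^{3/2} \to \infty$, so both $(t/n^{3/2})^{2/3}$ and $t/(\mu_1 n^{3/2})$ diverge and their minimum is of order $\epsilon^{2/3} n^{1/3}$. Consequently there is a constant $C' > 0$ (depending only on the model) with
\[
  \mu_{\bb}\big( \abs{R_n} \ge \epsilon\, a_n \big) \;\le\; 2\exp\big( -\tfrac{1}{C'}\, \epsilon^{2/3} n^{1/3} \big) \;\longrightarrow\; 0
\]
as $n \to \infty$, which is precisely $R_n/a_n \to 0$ in probability (indeed with stretched-exponential rate). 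Combining the two displays via Slutsky's theorem yields the claimed convergence $\frac{T_3 - \mu_{\bb}(T_3)}{(n-2)\mu_2\sqrt{\binom{n}{2}}} \Rightarrow \mathcal{N}(0,v^2)$.

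I expect essentially no real obstacle here: all the genuine analysis sits in Theorem \ref{theorem:ERGMtriangleLpandTails}, and once \eqref{eqn:T3minusf1fluctuations} is available the corollary is a short Slutsky argument. The only point requiring attention is the comparison of the two scales, namely the $n^{3/2}$-size fluctuations of $R_n$ against the $n^2$-size normalization $a_n$; this gap — exactly the gain from centering $T_3$ around the stochastic term $(n-2)\mu_2 f_1$ rather than around its mean — is what makes the linear part asymptotically dominant, and it requires no probabilistic input beyond the hypothesis and the already established concentration bound.
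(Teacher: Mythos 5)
Your proposal is correct and follows essentially the same route as the paper: decompose $T_3 - \mu_{\bb}(T_3)$ into the linear term $(n-2)\mu_2 f_1$ plus a remainder, use \eqref{eqn:T3minusf1fluctuations} with $t$ proportional to the $n^2$-scale normalization to get the remainder to vanish in probability (with the same $\exp(-\Omega(n^{1/3}))$ rate the paper notes in its remark), and conclude by a Slutsky/converging-together argument (the paper cites \cite[Theorem 3.1]{Bil68} for this step).
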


Actually, the convergence can be quantified in the Wasserstein distance. Let us recall that for two probability measures $\mu, \nu$ on $\IR$ with finite first moment (i.\,e. $\int \abs{x} d\mu(x) < \infty, \int \abs{x} d\nu(x) < \infty$) the Wasserstein distance is defined as
\[
d_W(\mu,\nu) = \sup \left \lbrace \int f d\mu - \int f d\nu : f \in \mathrm{Lip}_1 \right \rbrace,
\]
where $\mathrm{Lip}_1$ denotes the set of all Lipschitz-continuous functions with Lipschitz constant at most $1$. For two random variables $X,Y$ we define $d_W(X,Y)$ as the Wasserstein distance between its distributions. Define
\begin{align*}
\tilde{T_3}(x) &\coloneqq (n-2)^{-1}\mu_2^{-1} \binom{n}{2}^{-1/2} \sum_{\{e,f,g\} \in \mathcal{T}_n} (x_{e} x_f x_g - \IE_{\mu_{\bb}}(x_{e} x_f x_g)) \\
\tilde{L}(x) &\coloneqq \binom{n}{2}^{-1/2} \sum_{e \in \mathcal{I}_n} (x_e - \IE_{\mu_{\bb}} x_e).
\end{align*}

\begin{proposition}	\label{proposition:WassersteinDistanceTriangles}
Let $\mu_{\bb} = \mu_{\bb}^{(n)}$ be an ERGM satisfying a $\partial$-LSI$(\sigma^2)$ and let $Z \sim \mathcal{N}(0,v^2)$ for some $v^2 > 0$. There exists a constant $C = C(\sigma^2)$ such that
\[
d_W(\tilde{T_3}, Z) \le d_W(\tilde{L}, Z) + C n^{-1/2}.
\]
\end{proposition}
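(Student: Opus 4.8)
The plan is to reduce the statement, via the triangle inequality for the Wasserstein distance, to an $L^1$-estimate on $\tilde{T_3}-\tilde{L}$, which is then supplied by the tail bound \eqref{eqn:T3minusf1fluctuations} of Theorem \ref{theorem:ERGMtriangleLpandTails}. Since $\tilde{T_3}$ and $\tilde{L}$ are both (deterministic) functions of the same random configuration $x \sim \mu_{\bb}$, for every $f \in \mathrm{Lip}_1$ we have $\IE_{\mu_{\bb}} f(\tilde{T_3}) - \IE_{\mu_{\bb}} f(\tilde{L}) \le \IE_{\mu_{\bb}}\abs{\tilde{T_3}-\tilde{L}}$, hence $d_W(\tilde{T_3},\tilde{L}) \le \IE_{\mu_{\bb}}\abs{\tilde{T_3}-\tilde{L}}$, and therefore
\[
d_W(\tilde{T_3}, Z) \le d_W(\tilde{T_3},\tilde{L}) + d_W(\tilde{L}, Z) \le \IE_{\mu_{\bb}}\abs{\tilde{T_3}-\tilde{L}} + d_W(\tilde{L}, Z).
\]
It thus remains to prove $\IE_{\mu_{\bb}}\abs{\tilde{T_3}-\tilde{L}} \le C n^{-1/2}$.

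By the definitions of $\tilde{T_3}$ and $\tilde{L}$ and of $\mu_2, f_1$, and since $T_3(x) = \sum_{\{e,f,g\}\in\mathcal{T}_n} x_e x_f x_g$, we have
\[
\tilde{T_3} - \tilde{L} = \frac{W}{(n-2)\mu_2\sqrt{\binom{n}{2}}}, \qquad W \coloneqq T_3 - \IE_{\mu_{\bb}} T_3 - (n-2)\mu_2 f_1 .
\]
Writing $\IE_{\mu_{\bb}}\abs{W} = \int_0^\infty \mu_{\bb}(\abs{W}\ge t)\,dt$ and inserting \eqref{eqn:T3minusf1fluctuations}, the substitution $t = n^{3/2}s$ turns the bound into $n^{3/2}\int_0^\infty 2\exp\!\big(-\tfrac1C\min(s^{2/3}, s/\mu_1)\big)\,ds$; the integrand is bounded near $0$ and decays like $\exp(-s^{2/3}/C)$ as $s\to\infty$, so the integral is a finite constant depending only on $\sigma^2$ and $\mu_1$. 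Hence $\IE_{\mu_{\bb}}\abs{W} \le C' n^{3/2}$. Using $(n-2)\sqrt{\binom{n}{2}} \ge n^2/4$ for $n \ge 4$ and $\mu_2 > 0$, we obtain
\[
\IE_{\mu_{\bb}}\abs{\tilde{T_3}-\tilde{L}} \le \frac{C' n^{3/2}}{(n-2)\mu_2\sqrt{\binom{n}{2}}} \le \frac{4C'}{\mu_2}\, n^{-1/2},
\]
which is the claim after absorbing constants (and adjusting $C$ to handle the finitely many small $n$).

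I do not expect any of these steps to be a serious obstacle, as the substantive analytic work is already contained in Theorem \ref{theorem:ERGMtriangleLpandTails}; the only points that require care are bookkeeping the powers of $n$ so that the $n^{3/2}$ coming out of the tail integral is exactly cancelled by $(n-2)\sqrt{\binom{n}{2}} \asymp n^2$ to leave $n^{-1/2}$, and ensuring that the final constant is $n$-independent. The latter needs $\mu_1 = \IE_{\mu_{\bb}}x_e$ and $\mu_2 = \IE_{\mu_{\bb}}x_e x_f$ to be bounded away from $0$ (and $\mu_1$ away from $1$) uniformly in $n$; this follows from the weak dependence of the model, since the lower bound \eqref{eqn:LowerBoundConditionalProbability} on the conditional probabilities confines $\mu_1$ and $\mu_2$ to a compact subinterval of $(0,1)$ not depending on $n$, so that $\mu_1,\mu_2$ and $\sigma^2$ are all controlled by the (fixed) model parameters.
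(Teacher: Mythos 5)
Your proposal is correct and follows essentially the same route as the paper: triangle inequality for $d_W$, then bounding $d_W(\tilde{T_3},\tilde{L})$ by $\IE_{\mu_{\bb}}\abs{\tilde{T_3}-\tilde{L}}$ and integrating the tail bound \eqref{eqn:T3minusf1fluctuations} with a change of variables to extract the factor $n^{-1/2}$. The only (harmless) difference is cosmetic — you rescale $\IE\abs{W}$ by $(n-2)\mu_2\sqrt{\tbinom{n}{2}}$ at the end rather than substituting $s=\mu_2 n^{1/2}t$ directly, and you spell out why $\mu_1,\mu_2$ are bounded away from $0$ uniformly in $n$, which the paper leaves implicit.
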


Consequently, a rate of convergence in the Wasserstein distance for the number of edges in the ERGM immediately implies a rate of convergence for the number of triangles. 

\subsection{Central limit theorems for subgraph counts in the Erd{\"o}s--R{\'e}nyi model} \label{subsection:CLTER}
The second application of the concentration inequalities is a central limit theorem in the classical problem of subgraph counts in the Erd{\"o}s--R{\'e}nyi model. Note that the Erd{\"o}s--R{\'e}nyi model is a spin system $\mu_{n,p}$ on $\{0,1\}^{\mathcal{I}_n}$ with independent components and $\mu_{n,p}(x_e = 1) = p$.

Define the average degree (or average density) of a graph $d(G) \coloneqq \max_{H \subset G} \frac{\abs{E(H)}}{\abs{V(H)}}$, and the modified form $d'(G) \coloneqq \max_{H \subset G, \abs{E(H)} \ge 2} \frac{2\abs{E(H)}-1}{\abs{V(H)}-2}$. Denote by $T_G$ the number of subgraphs $G$ in the Erd{\"o}s--R{\'e}nyi random graph, i.\,e. graph homomorphisms from $G$ to the Erd{\"o}s--R{\'e}nyi random graph. A possible representation is
\begin{equation}    \label{eqn:T_G}
  T_G(X) = \frac{1}{\abs{\mathrm{Aut}(G)}} \sum_{f: V \to [n] \text{ injective }} \prod_{e \in E} X_{f(e)}
\end{equation}
with the definition $f(e) = \{ f(v_1), f(v_2) \}$ for $e = \{v_1, v_2\}$. Moreover, we write $(n)_k \coloneqq n(n-1) \cdots (n-k+1)$.

\begin{thm}      \label{theorem:ERCLTallgraphs}
  Let $G = (V,E)$ be any simple, connected graph. If $p = p(n)$ satisfies $p \le 1 - \varepsilon$ for some $\varepsilon > 0$ and $n p^{d'(G)}\log^{-\abs{E}}(1/p) \to \infty$, then
  \[
    \frac{T_G - \mu_{n,p}(T_G)}{2(\abs{\mathrm{Aut}(G)})^{-1} \abs{E} (n-2)_{\abs{V}-2} p^{\abs{E}-1} \sqrt{\binom{n}{2} p(1-p)}} \Rightarrow \mathcal{N}(0,1).
  \]
\end{thm}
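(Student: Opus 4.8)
The plan is to mimic the H\'ajek-projection argument behind Proposition~\ref{proposition:WassersteinDistanceTriangles}: approximate $T_G$ by its $L^2(\mu_{n,p})$-orthogonal projection onto the affine functions of the edge variables, carry the elementary CLT for the edge count over to this linear part, and control the nonlinear remainder by the multilevel concentration inequality for finite-degree polynomials (Theorem~\ref{theorem:fdPolynomials}). We may assume $\abs{E}\ge2$ (for $G=K_2$ the statement is the CLT for the edge count) and $\log(1/p)\ge1$ (when $p$ stays bounded away from $0$ the hypothesis holds automatically, since then $np^{d'(G)}\to\infty$ and $\log^{-\abs{E}}(1/p)$ is bounded above and below, and all the $\log$-factors below become harmless constants). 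Since the edges are independent, the projection of $T_G$ onto the span of $\{1\}$ and $\{x_e-p:e\in\mathcal{I}_n\}$ is $\widehat{T}_G\coloneqq\IE_{\mu_{n,p}}T_G+c_n\sum_{e\in\mathcal{I}_n}(x_e-p)$, where $c_n=\Cov(x_e,T_G)/(p(1-p))$ does not depend on $e$ by edge-transitivity. In the representation~\eqref{eqn:T_G} only the injective maps $f\colon V\to[n]$ whose image contains $e$ contribute to $\Cov(x_e,T_G)$, and there are precisely $2\abs{E}(n-2)_{\abs{V}-2}$ of them (pick an edge of $G$ and an orientation to map onto $e$, then extend injectively over the remaining $\abs{V}-2$ vertices). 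Hence $c_n=2\abs{\mathrm{Aut}(G)}^{-1}\abs{E}(n-2)_{\abs{V}-2}p^{\abs{E}-1}$ and $\Var(\widehat{T}_G)=c_n^2\binom{n}{2}p(1-p)$ is exactly the square of the normalization in the theorem. Writing $D_n\coloneqq\sqrt{\Var(\widehat{T}_G)}$, it therefore suffices to establish
\[
\frac{\widehat{T}_G-\IE_{\mu_{n,p}}T_G}{D_n}=\frac{\sum_{e\in\mathcal{I}_n}(x_e-p)}{\sqrt{\binom{n}{2}\,p(1-p)}}\ \Rightarrow\ \mathcal{N}(0,1)
\]
together with $(T_G-\widehat{T}_G)/D_n\to0$ in probability; Slutsky's theorem then gives the assertion (and, tracking the rates below, so does the triangle inequality for $d_W$).

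For the first convergence, the hypotheses force $\binom{n}{2}p(1-p)\to\infty$: since $\log(1/p)\ge1$ we get $np^{d'(G)}\to\infty$ from $np^{d'(G)}\log^{-\abs{E}}(1/p)\to\infty$, and $\abs{E}\ge2$ gives $d'(G)\ge1$, whence $np\ge np^{d'(G)}\to\infty$ and $n^2p(1-p)\to\infty$. A normalized sum of $\binom{n}{2}$ i.i.d.\ centred $\mathrm{Bernoulli}(p_n)$ variables then obeys Lyapunov's condition, and Berry--Esseen even yields the rate $O\big((\binom{n}{2}p(1-p))^{-1/2}\big)$.

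For the second convergence I would apply Theorem~\ref{theorem:fdPolynomials} to $T_G-\widehat{T}_G$, a polynomial of degree $\abs{E}$ whose affine part vanishes. Being a product measure, $\mu_{n,p}$ satisfies a $\partial$-$\mathrm{LSI}(\sigma^2)$---and hence the hypotheses of Theorem~\ref{theorem:fdPolynomials}---with $\sigma^2\lesssim_\epsilon\log(1/p)$ the logarithmic Sobolev constant of one $\mathrm{Bernoulli}(p)$. It remains to estimate the relevant operator- and Hilbert--Schmidt-type norms of the $k$-th order difference tensors of $T_G-\widehat{T}_G$ for $2\le k\le\abs{E}$---subtracting $\widehat{T}_G$ kills the first-order level. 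Each such norm splits into combinatorial quantities indexed by the $k$-edge subgraphs $H\subset G$, essentially numbers of injective maps $G\to K_n$ whose image contains a fixed copy of $H$, and a short count shows that---up to $G$-dependent constants---the contribution of such an $H$ to $\norm{T_G-\widehat{T}_G}_{L^1}$, once the tail bounds of Theorem~\ref{theorem:fdPolynomials} are integrated out, is of order $\sigma^{\abs{E(H)}}\,n^{\abs{V}-\abs{V(H)}/2}\,p^{\abs{E}-\abs{E(H)}}$. Setting $m_n\coloneqq\min_{H\subset G,\,\abs{E(H)}\ge2}n^{\abs{V(H)}-2}p^{2\abs{E(H)}-1}$, dividing by $D_n\asymp_\epsilon n^{\abs{V}-1}p^{\abs{E}-1/2}$ and bounding $\sigma^{\abs{E(H)}}\lesssim_\epsilon(\log(1/p))^{\abs{E}/2}$ gives
\[
\frac{\norm{T_G-\widehat{T}_G}_{L^1}}{D_n}\ \lesssim_\epsilon\ \Big(\frac{(\log(1/p))^{\abs{E}}}{m_n}\Big)^{1/2}\ \le\ \big(np^{d'(G)}\,\log^{-\abs{E}}(1/p)\big)^{-1/2},
\]
the last inequality because $n^{\abs{V(H)}-2}p^{2\abs{E(H)}-1}=\big(np^{(2\abs{E(H)}-1)/(\abs{V(H)}-2)}\big)^{\abs{V(H)}-2}\ge np^{d'(G)}$ once $np^{d'(G)}\ge1$ (using $\tfrac{2\abs{E(H)}-1}{\abs{V(H)}-2}\le d'(G)$, $p\le1$ and $\abs{V(H)}-2\ge1$). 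By hypothesis the right-hand side tends to $0$, which finishes this step.

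The principal obstacle is precisely this last bookkeeping: organizing the difference tensors of $T_G-\widehat{T}_G$---and identifying the extremal subgraph $H$---accurately enough that the exponents delivered by Theorem~\ref{theorem:fdPolynomials} align with $\Var(\widehat{T}_G)$ and reproduce both $d'(G)$ and the power $\abs{E}$ of $\log(1/p)$ in the hypothesis. Alternatively, one could compute $\Var(T_G)-\Var(\widehat{T}_G)$ from the classical subgraph-variance expansion and compare it with $D_n^2$ directly, but this sidesteps the functional-inequality methods of the paper. The remaining ingredients---the covariance count, the edge-count CLT, and Slutsky's theorem---are routine.
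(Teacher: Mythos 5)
Your proposal is correct and takes essentially the same route as the paper: an $L^2$ (Hoeffding) projection onto the linear edge part, whose coefficient $2\abs{\mathrm{Aut}(G)}^{-1}\abs{E}(n-2)_{\abs{V}-2}p^{\abs{E}-1}$ gives exactly the stated normalization, a triangular-array CLT for that part, and control of the order-$\ge 2$ remainder via Theorem \ref{theorem:fdPolynomials} with $\sigma^2(p)\asymp\log(1/p)$, organized over subgraphs $H\subset G$ just as the paper's split of $T_k$ into $T_{k,\alpha}$. The bookkeeping you flag as the main obstacle is precisely what the paper carries out with the bound $\norm{A^{(k,\alpha)}}_2\le n^{\abs{V}-\alpha/2}$, and your exponents (the extremal subgraph producing $d'(G)$ and the power $\abs{E}$ of $\log(1/p)$) match it.
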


We may replace the condition on $d'(G)$ in Theorem \ref{theorem:ERCLTallgraphs} by a condition on the maximal degree $d(G)$, though with an additional factor $6$, i.\,e. $np^{6d(G)} \log^{-\abs{E}}(1/p) \to \infty$ is a sufficient condition. This is a consequence of the inequality $d(G) \le d'(G) \le 6 d(G)$.

Similar calculations (and a proof of a central limit theorem for subgraph counts under non-optimal conditions) have been done in \cite{NW88}, interpreting subgraph counts as incomplete $U$-statistics and using the Hoeffding decomposition to prove the CLT. However, \cite[Theorem 3.1]{NW88} does not seem to be quite correct, since for triangles it requires a normalization of the subgraph count by $(n-2)\sqrt{p(1-p) \binom{n}{2}}$, which does not converge to a normal distribution in general. As can be seen from Theorem \ref{theorem:ERCLTallgraphs}, the correct normalization is $(n-2) p^2 \sqrt{p(1-p)\binom{n}{2}}$ (see also \cite[Equation (1.2)]{DE09}). In our approach, we additionally provide a quantification, i.e. we show that $T_3 - \IE_{\mu_{n,p}} T_3 - p^2(n-2)f_1 = \mathcal{O}_{\mu_{n,p}}(n^{3/2})$ with exponentially decaying tails. We do not believe it is possible to derive similar results using the $U$-statistics approach.

Using the method of moments, \cite{Ruc88} has shown that the convergence holds for any graph $G$ if and only if $np^{d(G)} \to \infty$ and $n^2(1-p) \to \infty$, which is optimal.  

\subsection{The general results} \label{subsection:CoMgeneralresults}
Theorems \ref{theorem:ERGMtriangleLpandTails} and \ref{theorem:ERCLTallgraphs} build upon general concentration properties in the presence of logarithmic Sobolev inequalities. To formulate our results, we introduce higher order differences
$\mathfrak{h}_{i_1 \ldots i_d}$ for any $d \in \mathbb{N}$ by setting $
\mathfrak{h}_{i_1 \ldots i_d}f = \mathfrak{h}_{i_1}(\mathfrak{h}_{i_2\ldots i_d}f).$ In particular, we obtain tensors of $d$-th order differences $\mathfrak{h}^{(d)}f$ with coordinates $\mathfrak{h}_{i_1\ldots i_d}f$. Regarding $\mathfrak{h}^{(d)}f$ as a vector indexed by $\mathcal{I}^d$, we may define $\abs{\mathfrak{h}^{(d)}f}$ as its Euclidean norm. We will write $\norm{f}_p$ for the $L^p(\mu)$ norm of $f$ and $\norm{\mathfrak{h}^{(d)} f}_p \coloneqq \norm{\abs{\mathfrak{h}^{(d)} f}}_p$.

\begin{thm}                      \label{theorem:com}
  Let $\mu$ be a spin system on $\mathcal{Y} = \mathcal{X}^{\mathcal{I}}$ satisfying a $\partial\textrm{-}\mathrm{LSI}(\sigma^2)$. For any $f: \mathcal{Y} \to \IR$ and $C \coloneqq \log(2)\sigma^2 (de)^2/2$ we have
  \begin{align}        \label{eqn:multilevelconcentration}
    \mu(\abs{f- \IE_\mu f} \ge t) \le 2 \exp\left(-\frac{1}{C} \min \left( \frac{t^{2/d}}{\norm{\mathfrak{h}^{(d)} f}_\infty^{2/d}}, \min_{k = 1,\ldots,d-1} \frac{t^{2/k}}{\norm{\mathfrak{h}^{(k)} f}^{2/k}_2} \right)\right).
  \end{align}
\end{thm}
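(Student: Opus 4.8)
The plan is to combine an $L^p$-norm bound for $f - \IE_\mu f$ coming from the $\partial$-LSI with a Chebyshev/Markov argument and then optimize over $p$. First I would recall the standard consequence of a logarithmic Sobolev inequality: by the Herbst-type argument (or directly, differentiating $p \mapsto \norm{g}_p$ and using the LSI applied to $g = |f - \IE_\mu f|^{p/2}$), one obtains a recursive inequality controlling $\norm{f - \IE_\mu f}_p$ in terms of $\sigma^2$ and the $L^p$-norm of the gradient $|\partial f|$. The key structural point is that, while $|\partial f|$ involves the first-order difference, applying the same reasoning iteratively to the differences $\mathfrak{h}_i f$, $\mathfrak{h}_{ij} f$, etc., lets one trade a factor of $\sqrt{p}$ for each differentiation. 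After $d$ steps one is left with the $d$-th order difference tensor, which is bounded in $L^\infty$ by hypothesis (that is the term $\norm{\mathfrak{h}^{(d)} f}_\infty$), while the intermediate terms of order $k = 1, \dots, d-1$ contribute via their $L^2$-norms $\norm{\mathfrak{h}^{(k)} f}_2$. Concretely I expect an estimate of the shape
\[
\norm{f - \IE_\mu f}_p \le \sum_{k=1}^{d-1} (Cp)^{k/2} \norm{\mathfrak{h}^{(k)} f}_2 + (Cp)^{d/2} \norm{\mathfrak{h}^{(d)} f}_\infty,
\]
valid for all $p \ge 2$, with $C$ proportional to $\sigma^2$ and picking up the $(de)^2$ factor from the iteration; one must take a little care passing from $\partial$ to $\mathfrak{h}$ (using that a $\partial$-LSI$(\sigma^2)$ implies an $\mathfrak{h}$-LSI$(\sigma^2/2)$, as noted in the excerpt) and from the gradient to its coordinates.

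Given such a polynomial-in-$\sqrt{p}$ bound on all moments, the tail estimate \eqref{eqn:multilevelconcentration} follows by a routine device: Markov's inequality gives $\mu(|f - \IE_\mu f| \ge t) \le (\norm{f-\IE_\mu f}_p/t)^p$, and then one chooses $p$ so that each of the $d$ terms above is at most $t/(2d)$ (or similar), which forces $p$ to be of order $\min\big( (t/\norm{\mathfrak{h}^{(d)}f}_\infty)^{2/d}, \min_{k} (t/\norm{\mathfrak{h}^{(k)}f}_2)^{2/k} \big)/C$. Substituting this $p$ back yields the claimed exponential bound with the minimum inside, and the leading constant $2$ and the precise form of $C = \log(2)\sigma^2(de)^2/2$ come out of tracking the constants in the optimization (the $\log 2$ from requiring $p \ge 2$ and the factor $2$ in the final bound). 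This optimization-over-moments step is entirely standard once the moment bound is in hand; see the treatments in \cite{BLM13} or \cite{Ad06} for the analogous computations.

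The main obstacle is establishing the iterated moment inequality cleanly, i.e. justifying the recursion that peels off one order of difference at a time and correctly accounts for the $L^\infty$ versus $L^2$ roles of the highest versus intermediate orders. The subtlety is that after one application of the LSI one naturally gets $\norm{f - \IE_\mu f}_p \lesssim \sqrt{Cp}\, \norm{|\partial f|}_p$, and to continue one must control $\norm{|\partial f|}_p$ — but $|\partial f|$ is itself a function on $\mathcal{Y}$, so one applies the LSI again to it (splitting into its expectation plus fluctuation), and the fluctuation term's gradient is governed by second-order differences while its expectation contributes an $L^2$-type term. Iterating this bookkeeping $d$ times, and handling the combinatorial constants (the $(de)^2$, which is where the $e^{2}$ enters, presumably via $\sup_k k/e^{k}$-type estimates or Stirling), is the technical heart of the argument. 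I would also need to be careful that the differences are nested correctly (so that $\mathfrak{h}^{(k)} f$ really does control the gradient of $|\mathfrak{h}^{(k-1)}f|$ coordinatewise, up to the contraction property of $\mathfrak{h}$), but this should follow from the definition $\mathfrak{h}_{i_1\ldots i_d}f = \mathfrak{h}_{i_1}(\mathfrak{h}_{i_2\ldots i_d}f)$ together with the triangle inequality for the $L^\infty$-difference.
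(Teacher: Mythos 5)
Your plan matches the paper's proof essentially step for step: the paper first establishes the moment bound $\norm{f-\IE_\mu f}_p \le \sum_{k=1}^{d-1}(\sigma^2 p)^{k/2}\norm{\mathfrak{h}^{(k)}f}_2 + (\sigma^2 p)^{d/2}\norm{\mathfrak{h}^{(d)}f}_p$ by iterating the LSI-based inequality $\norm{f-\IE_\mu f}_p \le (\sigma^2 p)^{1/2}\norm{\mathfrak{h}f}_p$ together with the contraction $\abs{\mathfrak{h}\abs{\mathfrak{h}^{(k)}f}}\le\abs{\mathfrak{h}^{(k+1)}f}$ (both cited from earlier work), and then runs exactly the Chebyshev-plus-optimization-in-$p$ argument of Adamczak that you describe, with the $(de)^2$ and $\log 2$ arising from the number of terms and the regime restriction just as you anticipate. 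The proposal is correct and takes essentially the same route.
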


We apply Theorem \ref{theorem:com} to functions which resemble multilinear polynomials, constructed in the following way. For any $d \in \IN$ we define the (generalized) diagonal of the index set $\mathcal{I}^d$ as
  $\Delta_d \coloneqq \{ (i_1,\ldots, i_d) \in \mathcal{I}^d : \abs{\{i_1,\ldots,i_d\}} < d \}.$
Let $f: \mathcal{X} \to \IR$ and $A$ a $d$-tensor with vanishing diagonal. We can associate to $f$ and any $J \subset \mathcal{I}$ the functions $f_J, \tilde{f}_J: \mathcal{Y} \to \IR$ defined via 
\[
f_J(y) = \prod_{i \in J} f(y_i) \quad \text{and} \quad \tilde{f}_i(y) = \prod_{i \in J} (f(y_i) - \int f(y_i) d\mu)
\]
and introduce the short-hand notations $\mu_J \coloneqq \IE_\mu f_J$ and $\tilde{\mu}_J \coloneqq \IE_\mu \tilde{f}_J$.
Given $(f,d,A)$, we construct polynomials as follows: for any finite set $\mathcal{J}$ let
\[
  \mathcal{P}(\mathcal{J}) = \left\lbrace S \subseteq 2^{\mathcal{J}} : S \text{ is a partition of } \mathcal{J} \right\rbrace.
\]
Finally, we set
\begin{align}    \label{eqn:fdApolynomial}
  f_{d,A} = \sum_{I \in \mathcal{I}^d} A_I \sum_{P \in \mathcal{P}(I)} g_P \coloneqq \sum_{I \in \mathcal{I}^d} A_I \sum_{P \in \mathcal{P}(I)} (-1)^{M(P)} \prod_{\substack{J \in P \\ \abs{J} = 1}} \tilde{f}_J  \prod_{\substack{J \in P \\ \abs{J} > 1}} \tilde{\mu}_J.
\end{align}

\begin{thm}                  \label{theorem:fdPolynomials}
  Let $\mu$ be a spin system on $\mathcal{Y} = \mathcal{X}^{\mathcal{I}}$ satisfying a $\partial\textrm{-}\mathrm{LSI}(\sigma^2)$, $d \in \IN, A$ a $d$-tensor with vanishing diagonal and $f: \mathcal{X} \to \IR$ with $\abs{f(x) - f(y)} \le c$ for all $x,y \in \mathcal{X}$. Then, $f_{d,A}$ as in \eqref{eqn:fdApolynomial} is a centered random variable, for all $p \ge 2$ we have
  \begin{align}    \label{eqn:fdALpestimate}
    \norm{f_{d,A}}_p \le \left( \sigma^2 c^2 \norm{A}_2^{2/d} p\right)^{d/2}
  \end{align}
  and consequently
  \begin{align}            \label{eqn:fdAestimates}
    \mu\left( \abs{f_{d,A}} \ge t \right) \le 2 \exp \left( - \Big( \frac{\log(2)t}{2e \sigma^d c^d \norm{A}_2} \Big)^{2/d} \right).
  \end{align}
\end{thm}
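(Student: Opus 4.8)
The plan is to derive the $L^p$ estimate \eqref{eqn:fdALpestimate} from Theorem \ref{theorem:com} (or rather its $L^p$-counterpart underlying it) by controlling the iterated difference tensors $\mathfrak{h}^{(k)} f_{d,A}$ for $k = 1, \ldots, d$, and then obtain the tail bound \eqref{eqn:fdAestimates} from \eqref{eqn:fdALpestimate} by the standard Markov-inequality optimization over $p$. The first step is to verify that $f_{d,A}$ is centered: each summand $g_P$ for a partition $P$ containing at least one singleton $\{i\}$ has a factor $\tilde f_{\{i\}} = f(y_i) - \int f \, d\mu$, which integrates to zero by independence of the coordinates under the product structure combined with Fubini; the only partition with no singleton contributes $\prod_{J \in P} \tilde\mu_J$, a constant, but since $A$ has vanishing diagonal and $I \in \mathcal{I}^d$ with a no-singleton partition forces a repeated index, these terms are killed by $A_I = 0$. (One must check the sign bookkeeping $M(P)$ is set up so the inclusion–exclusion is consistent; I'd treat $f_{d,A}$ as literally $\sum_I A_I \prod_{j} (\tilde f_{\{i_j\}} + \text{correction})$ expanded, which makes both the centering and the combinatorics transparent.)

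The core estimate is the bound on $\norm{\mathfrak{h}^{(k)} f_{d,A}}_2$ and $\norm{\mathfrak{h}^{(d)} f_{d,A}}_\infty$. The key observation is that $\mathfrak{h}_i$ applied to a product $\prod_j h_j(y_j)$ picks out only the factor depending on $y_i$ and replaces it by its oscillation, which is bounded by $c$ for the factors of the form $\tilde f_{\{j\}}$; hence applying $k$ distinct differences $\mathfrak{h}_{i_1 \cdots i_k}$ to $g_P$ yields something bounded (in $L^\infty$, hence in every $L^p$) by $c^k$ times the absolute value of the remaining constant factors $\prod_{|J|>1}\tilde\mu_J$, and it vanishes unless $\{i_1, \ldots, i_k\}$ is exactly the set of singletons of $P$ (so in particular $k \le d$). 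Summing $|A_I|$ over the indices $I$ that can produce a given $k$-tuple, and using $|\tilde\mu_J| \le c^{|J|}$ together with Cauchy–Schwarz, one gets $\norm{\mathfrak{h}^{(k)} f_{d,A}}_2 \le C_{d,k}\, c^d \norm{A}_2$ up to a combinatorial constant depending only on $d$ (counting partitions of $d$ elements and the multinomial ways of distributing non-singleton blocks), and similarly $\norm{\mathfrak{h}^{(d)} f_{d,A}}_\infty \le C_d\, c^d \norm{A}_\infty \le C_d\, c^d \norm{A}_2$. Plugging these into \eqref{eqn:multilevelconcentration}, or better into the underlying moment bound $\norm{f - \IE f}_p \le \sum_{k=1}^d (\sigma^2 p)^{k/2}(k!)^{\text{something}}\norm{\mathfrak{h}^{(k)}f}_2$-type inequality used to prove Theorem \ref{theorem:com}, and noting that $\norm{A}_2^{2/d}$ is the natural interpolation scale, collapses the $\min$/sum into the single clean bound $(\sigma^2 c^2 \norm{A}_2^{2/d} p)^{d/2}$ after absorbing $d$-dependent constants (the paper's choice of constants, e.g. the $(de)^2$ in $C$, is presumably calibrated so that this absorption is exact).

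Finally, \eqref{eqn:fdAestimates} follows from \eqref{eqn:fdALpestimate} by Markov's inequality: $\mu(|f_{d,A}| \ge t) \le t^{-p}\norm{f_{d,A}}_p^p \le (\sigma^2 c^2 \norm{A}_2^{2/d} p / t^{2/d})^{pd/2}$, and optimizing over $p \ge 2$ (choose $p \sim t^{2/d}/(e\sigma^2 c^2 \norm{A}_2^{2/d})$ when this exceeds $2$, otherwise the bound is trivial) yields a bound of the form $2\exp(-(\text{const}\cdot t / (\sigma^d c^d \norm{A}_2))^{2/d})$; matching the explicit constant $\log(2)/(2e)$ is a routine check of the optimization. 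The main obstacle I anticipate is the bookkeeping in the second step — correctly tracking which $(i_1,\ldots,i_k)$-tuples survive under $\mathfrak{h}^{(k)}$, summing $|A_I|$ over the fibers without losing more than a $d$-dependent constant, and making sure the $\norm{A}_2$ (rather than a worse mixed norm) genuinely controls all the $\norm{\mathfrak{h}^{(k)}f_{d,A}}_2$ simultaneously; the centering and the final optimization are comparatively mechanical.
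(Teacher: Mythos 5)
Your proposal has two genuine gaps. First, the centering step: you justify $\IE_\mu f_{d,A}=0$ by ``independence of the coordinates under the product structure'', but $\mu$ is a general spin system (e.g.\ the ERGM), not a product measure, so $\IE_\mu \big[\tilde f_{\{i\}}\tilde f_{\{j\}}\big]=\tilde\mu_{\{i,j\}}$ is typically nonzero. Moreover, your claim that partitions without singletons force a repeated index and are therefore killed by the vanishing diagonal of $A$ is false: for $d=2$ and $i\neq j$ the partition $\{\{i,j\}\}$ contributes exactly the term $-\tilde\mu_{ij}$ appearing in $f_{2,A}$, and these constant blocks are precisely the corrections that make $f_{d,A}$ centered \emph{despite} dependence. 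The paper's centering argument is purely combinatorial and needs no independence: for each $I$ and each partition $P$ with at least two singletons, merging all singletons into a single block gives another partition $\tilde P$ with $g_{\tilde P}=\IE_\mu g_P$, and the signs $(-1)^{M(P)}$ produce the cancellation; partitions with exactly one singleton integrate to zero simply because $\IE_\mu\tilde f_{\{i\}}=0$.

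Second, the core estimate. Your claim that $\mathfrak h_{i_1\cdots i_k}g_P$ ``vanishes unless $\{i_1,\ldots,i_k\}$ is exactly the set of singletons of $P$'' is incorrect: it vanishes only unless this set is \emph{contained} in the singletons of $P$, and for $k<d$ the surviving terms still carry the random factors $\tilde f_{\{j\}}$ of the remaining singletons, so they are not constants. Consequently the bound $\norm{\mathfrak h^{(k)}f_{d,A}}_2\le C_{d,k}\,c^d\norm{A}_2$ cannot be reached by the deterministic route you sketch (bounding fiber sums of $\abs{A_I}$ and applying Cauchy--Schwarz): that argument picks up a factor of order $\abs{\mathcal I}^{(d-k)/2}$, and there is no dimension-free sup-norm control of the intermediate differences in terms of $\norm{A}_2$ alone. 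The paper instead exploits a recursion: one difference gives $\mathfrak h_l f_{d,A}\le c\,\abs{\sum_{k=1}^d f_{d-1,A^{k=l}}}$, i.e.\ a centered polynomial of the same type of degree $d-1$, and then the LSI moment inequality and the Poincar\'e inequality are applied alternately at each of the $d$ levels, so the $L^2$ norms of the intermediate differences are controlled probabilistically and the iteration terminates at $\norm{A}_2$. This recursion is also what produces the single-term bound $(\sigma^2 c^2\norm{A}_2^{2/d}p)^{d/2}$ of \eqref{eqn:fdALpestimate}; your plan of plugging $k$-th order bounds into the multilevel inequality \eqref{eqn:Lpinequalities} would at best yield a sum of $d$ terms with $d$-dependent constants, which does not reduce to the stated estimate nor to the explicit constant $\log(2)/(2e)$ in \eqref{eqn:fdAestimates}. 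The final Markov-optimization step you describe is indeed routine and matches the paper.
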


Note that although \eqref{eqn:fdALpestimate} seems like an estimate similar to integrability properties of eigenfunctions of the generator $L_\mu$ of the Glauber dynamics associated to $\mu$, we do not believe that it can be derived in a similar manner. The reason is that for a non-uniform measure $\mu$ the functions $f_{d,A}$ are not eigenfunctions of $L_\mu$, which can easily be seen in the case $d = 1$.

It is possible to refine the tail estimates further by considering different norms of $A$. As we shall not need it in the present work, we refer to \cite{AKPS18} and \cite{GSS18b}.

\section{Weak dependence of the models} \label{section:weakdependence}
We will finally introduce the concept of weakly dependent random variables. Let $\mu$ be a spin system on $\mathcal{Y} = \mathcal{X}^\mathcal{I}$. Define an \emph{interdependence matrix} $(J_{ij})_{i,j \in \mathcal{I}}$ as any matrix with $J_{ii} = 0$ and such that for any $x, y \in \mathcal{Y}$ with $\overline{x}_j = \overline{y}_j$ we have
\begin{equation}\label{eqn:DefiInterdependence}
  d_{\mathrm{TV}}(\mu(\cdot \mid \overline{x}_i), \mu(\cdot \mid \overline{y}_i)) \le J_{ij}.
\end{equation}
The matrix $J$ (or any norm thereof) may be interpreted as measuring the \emph{strength of the interactions between the spins} in the spin system $\mu$. (In particular, note that $J = 0$ is an interdependence matrix for product measures $\mu$.)
Moreover, we need to control the minimal probabilities of the marginal distributions of the spin system $\mu$. To this end, define for any subset $S \subsetneq \mathcal{I}$ and any $i \notin S$
\begin{equation}\label{eqn:DefiBetaTilde}
  \tilde{\beta}_{i,S}(\mu) \coloneqq \inf_{\substack{x_S \in \mathcal{X}^S \\ \mu_S(x_S) > 0}} \inf_{\substack{y_{S^c} \in \mathcal{X}^{S^c} \\ \mu(y_{S^c}, x_{S}) > 0} }  \mu ( (y_{S^c})_i \mid x_S).
\end{equation}
If $S = \emptyset$, this reads $\tilde{\beta}_{i,\emptyset}(\mu) = \inf_{y \in \mathcal{Y} : \mu(y) > 0} \mu(y_i).$ The interpretation of $\tilde{\beta}_{i,S}(\mu)$ is straightforward: For any admissible partial configuration $x_S \in \mathcal{X}^S$ all possible marginals are supported on points with probability at least $\tilde{\beta}_{i,S}(\mu)$. Now let
\begin{equation*} 
  \tilde{\beta}(\mu) \coloneqq \inf_{S \subsetneq \mathcal{I}} \inf_{i \notin S} \tilde{\beta}_{i,S}(\mu)
\end{equation*}
be the infimum of all $\tilde{\beta}_{i,S}(\mu)$. Note that if there are no hard constraints, i.e. $\mu$ has full support, we have $\tilde{\beta}(\mu) = I(\mu) = \min_{i \in \mathcal{I}} \min_{y \in \mathcal{Y}} \mu( y_i \mid \overline{y}_i).$

We say that $\mu$ is \emph{$(\alpha_1, \alpha_2)$-weakly dependent}, if for some interdependence matrix $J$ 
\[
	\tilde{\beta}(\mu) \ge \alpha_1 \qquad \text{and} \qquad \norm{J}_{2 \to 2} \le 1-\alpha_2.
\]

\begin{thm}                          \label{theorem:LSIforSpinSystems}
  Let $\mu$ be a $(\alpha_1, \alpha_2)$-weakly dependent spin system.

  \begin{enumerate}
	\item For any function $f: \mathcal{Y} \to \IR_+$ vanishing outside of $\supp \mu$ we have
  	\begin{align}	\label{thm:eqn:ConclusionRelativeEntropy2}
  		\Ent_{\mu}(f) \le \frac{1}{\alpha_1\alpha_2^2} \sum_{i \in \mathcal{I}} \int \Ent_{\mu(\cdot \mid \overline{x_i})}(f(\overline{x_i}, \cdot)) d\mu(x).
  	\end{align}
  	\item $\mu$ satisfies $\mathrm{mLSI}(\rho_0)$ for $\rho_0 \coloneqq 2\abs{\mathcal{I}} \alpha_1^{-1} \alpha_2^{-2}$.
  	\item $\mu$ satisfies $\mathrm{LSI}(\sigma^2)$ for $\sigma^2 \coloneqq \log(\alpha_1^{-1})(\log(2) \alpha_1 \alpha_2^2)^{-1}$. 
  \end{enumerate}
\end{thm}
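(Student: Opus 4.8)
The plan is to prove part (1) — an approximate tensorization of the entropy — and to obtain parts (2) and (3) from it by soft arguments. For part (1) the starting point is the exact chain rule: writing $P_i f(x) \coloneqq \int f(\overline{x}_i,y)\,d\mu(y\mid\overline{x}_i)$ for the conditional expectation onto the $i$-th site, one has for every $i\in\mathcal{I}$
\[
  \Ent_\mu(f) = \IE_\mu\big[\Ent_{\mu(\cdot\mid\overline{x}_i)}(f(\overline{x}_i,\cdot))\big] + \Ent_\mu(P_i f).
\]
Summing over $i$ turns \eqref{thm:eqn:ConclusionRelativeEntropy2} into the equivalent statement
\[
  \sum_{i\in\mathcal{I}}\Ent_\mu(P_i f) \le \big(\abs{\mathcal{I}}-\alpha_1\alpha_2^2\big)\,\Ent_\mu(f),
\]
i.e.\ the ``smoothed'' entropies must beat the trivial bound $\sum_i\Ent_\mu(P_if)\le\abs{\mathcal{I}}\,\Ent_\mu(f)$ by a fixed amount $\alpha_1\alpha_2^2\,\Ent_\mu(f)$. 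This is where the weak dependence enters. Following the approximate-tensorization strategy of Marton, I would iterate the chain rule along a uniformly random ordering of $\mathcal{I}$ and, at each step, compare the law of the resampled block under $\mu$ with a product reference law; the per-coordinate discrepancy of these conditional laws is controlled by the interdependence matrix $J$ through its defining property \eqref{eqn:DefiInterdependence}, and feeding the vector of these discrepancies into the contraction $\norm{J}_{2\to2}\le1-\alpha_2$ collapses the accumulated error to a geometric-series bound of size $(1-\alpha_2)^{-2}\le\alpha_2^{-2}$. The remaining factor $\alpha_1^{-1}$ appears because \eqref{eqn:DefiInterdependence} only yields closeness in total variation, whereas the entropy functional requires control of a relative entropy: since every conditional law $\mu(\cdot\mid\overline{x}_i)$ is supported on atoms of mass at least $\tilde{\beta}(\mu)\ge\alpha_1$ (apply \eqref{eqn:DefiBetaTilde} with $S=\mathcal{I}\setminus\{i\}$), the elementary bound $D(\nu_1\|\nu_2)\le\chi^2(\nu_1\|\nu_2)\le 4\alpha_1^{-1}d_{\mathrm{TV}}(\nu_1,\nu_2)$ holds whenever $\nu_2$ has minimal mass $\ge\alpha_1$, which is exactly the conversion needed. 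The hypothesis that $f$ vanish off $\supp\mu$ is only a convenience ensuring all conditional entropies are well-defined.

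For part (2), recall that the Glauber generator satisfies $-Lf=\abs{\mathcal{I}}^{-1}\sum_i(\Id-P_i)f$, hence
\[
  \mathcal{E}(f,\log f) = \frac{1}{\abs{\mathcal{I}}}\sum_{i\in\mathcal{I}}\IE_\mu\big[f\,(\Id-P_i)\log f\big] \ge \frac{1}{\abs{\mathcal{I}}}\sum_{i\in\mathcal{I}}\IE_\mu\big[\Ent_{\mu(\cdot\mid\overline{x}_i)}(f)\big],
\]
the inequality being Jensen's inequality for the conditional expectation $P_i$ (namely $\IE_\mu[(P_if)\,P_i\log f]\le\IE_\mu[(P_if)\log P_if]$, together with self-adjointness of $P_i$). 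Combining this with part (1) gives $\Ent_\mu(f)\le\abs{\mathcal{I}}\alpha_1^{-1}\alpha_2^{-2}\,\mathcal{E}(f,\log f)$, which is $\mathrm{mLSI}(\rho_0)$ with $\rho_0=2\abs{\mathcal{I}}\alpha_1^{-1}\alpha_2^{-2}$; the stated exponential decay of relative entropy along $(P_t)$ is the standard consequence of a modified logarithmic Sobolev inequality.

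For part (3), apply part (1) to $f^2$ (after replacing $f$ by its restriction to $\supp\mu$, which changes neither $\Ent_\mu(f^2)$ nor the $\partial_i f$). For fixed $\overline{x}_i$ the conditional law $\nu\coloneqq\mu(\cdot\mid\overline{x}_i)$ is a measure on the finite set $\mathcal{X}$ all of whose atoms are $\ge\alpha_1$, so the standard logarithmic Sobolev inequality on a finite space gives
\[
  \Ent_\nu(g^2) \le \frac{\log(1/\alpha_1)}{\log 2}\iint\big(g(y)-g(y')\big)^2\,d\nu(y)\,d\nu(y') = \frac{2\log(1/\alpha_1)}{\log 2}\,\partial_i g(x)^2.
\]
Inserting this into the bound from part (1), summing over $i$ and integrating against $\mu$, yields $\Ent_\mu(f^2)\le\frac{2\log(1/\alpha_1)}{\log(2)\,\alpha_1\alpha_2^2}\int\abs{\partial f}^2\,d\mu$, i.e.\ the $\partial$-$\mathrm{LSI}(\sigma^2)$ with $\sigma^2=\log(\alpha_1^{-1})(\log(2)\,\alpha_1\alpha_2^2)^{-1}$.

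The main obstacle is plainly part (1): making the Dobrushin-type bound $\norm{J}_{2\to2}\le1-\alpha_2$ actually control the error accumulated in the iterated chain rule, and tracking the constants carefully enough that precisely the factor $(\alpha_1\alpha_2^2)^{-1}$ — with no spurious dependence on $\abs{\mathcal{I}}$ — survives. Once part (1) is in place, parts (2) and (3) are routine.
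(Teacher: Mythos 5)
Your parts (2) and (3) are correct and essentially coincide with the paper's own argument: the paper obtains (2) from the tensorization \eqref{thm:eqn:ConclusionRelativeEntropy2} via the bound $\Ent_{\nu}(e^g) \le \Cov_{\nu}(g,e^g)$ applied to the conditional measures, which is the same Jensen-for-$P_i$ computation you use (indeed $\IE_\mu[f(\Id-P_i)\log f] \ge \IE_\mu[\Ent_{\mu(\cdot\mid\overline{x}_i)}(f)]$ is exactly that covariance inequality), and it obtains (3) by inserting the one-site inequality $\Ent_{\mu(\cdot\mid\overline{y}_i)}(g^2)\le \frac{2\log(1/\alpha_1)}{\log 2}\Var_{\mu(\cdot\mid\overline{y}_i)}(g)$ (cited from Bobkov--Tetali, Remark 6.6) into \eqref{thm:eqn:ConclusionRelativeEntropy2}, which is your step verbatim since $\iint(g(y)-g(y'))^2 d\nu\,d\nu = 2\Var_\nu(g)$. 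The constants work out as you state.

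The genuine gap is part (1). The paper does not reprove the approximate tensorization; it invokes Marton's result (and Theorem 4.2 of the cited companion paper) for precisely the constant $(\alpha_1\alpha_2^2)^{-1}$. Your proposal neither cites such a result nor proves it: the passage from the iterated chain rule to a bound with no $\abs{\mathcal{I}}$-dependence is exactly the hard content, and your sketch leaves it unexecuted, as you concede. Two specific points would not survive scrutiny as written. First, the claim that the accumulated error ``collapses to a geometric-series bound of size $(1-\alpha_2)^{-2}\le\alpha_2^{-2}$'' is wrong as stated ($(1-\alpha_2)^{-2}\le\alpha_2^{-2}$ fails for $\alpha_2>1/2$); the quantity that actually appears in such arguments is $(1-\norm{J}_{2\to 2})^{-2}\le\alpha_2^{-2}$, and to get it one must feed a whole \emph{vector} of per-site discrepancies (in the known proofs, square roots of conditional relative entropies controlled via Pinsker-type inequalities) into the $\ell^2\to\ell^2$ bound on $J$ — a per-coordinate total-variation bound summed naively would require control of $\norm{J}_{1\to 1}$ or $\norm{J}_{\infty\to\infty}$ and would not give the stated constant. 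Second, the conversion $D(\nu_1\Vert\nu_2)\le\chi^2(\nu_1\Vert\nu_2)\le 4\alpha_1^{-1}d_{\mathrm{TV}}(\nu_1,\nu_2)$, while true, is not by itself ``exactly the conversion needed'': the recursion needs the comparison in the opposite direction (entropy of the true conditional law against the resampled reference, with the minimal-mass parameter entering through Radon--Nikodym bounds), and it is precisely this bookkeeping that produces the single factor $\alpha_1^{-1}$ rather than a product of such factors along the recursion. So either quote Marton's tensorization theorem as the paper does, in which case your (2) and (3) complete the proof, or part (1) must be carried out in full; as it stands the proposal is incomplete at its central step.
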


\begin{proof}[Proof of Theorem \ref{theorem:LSIforSpinSystems}]
  $(1)$: The entropy tensorization property has been established in \cite{Ma15} (see also \cite[Theorem 4.2]{GSS18} with a better constant). The condition on the full support of $\mu$ can be weakened by adapting the definition of $\beta$ in \cite[Theorem 4.2]{GSS18}.
  
  $(2)$: Let us define $\Omega_0 \coloneqq \supp \mu$, where $\supp$ is the support of $\mu$, i.e. $\supp(\mu) \coloneqq \{ y \in \mathcal{Y}: \mu(y) > 0\}$. The first part yields for any $f: \mathcal{Y} \to \IR$ vanishing outside of $\Omega_0$
  	\begin{equation}	\label{eqn:modLSI}
  		\Ent_{\mu}(f) \le \frac{1}{\alpha_1 \alpha_2^2} \sum_{i \in \mathcal{I}} \int \Ent_{\mu(\cdot \mid \overline{x_i})}(f(\overline{x_i},\cdot)) d\mu(x).
  	\end{equation}
  	This is equivalent to the fact that on the probability space $(\Omega_0, \mu)$, any function $f: \Omega_0 \to \IR_+$ satisfies the same inequality, which we shall work with from now on. For any probability measure $(\Omega, \mathcal{F},\nu)$ and any function $f$ such that $f, e^f \in L^2(\nu)$, we have by Jensen's inequality and the symmetry in the covariance
  	\begin{equation}	\label{eqn:proofmodLSIeqn2}
  		\Ent_{\nu}(e^f) \le \Cov_{\nu}(f,e^f) = \int \left( \int (f(y) - f(x)) d\nu(x) \right) e^{f(y)} d\nu(y).
  	\end{equation}
  	Apply the inequality \eqref{eqn:proofmodLSIeqn2} to the integral on the right hand side of equation \eqref{eqn:modLSI} to get
  	\begin{equation}	\label{eqn:modLSIWithCondProb}
  		\Ent_{\mu}(e^f) \le \frac{1}{\alpha_1 \alpha_2^2} \sum_{i \in \mathcal{I}} \int \left( \int (f(x) - f(\overline{x_i},y)) d\mu(y \mid \overline{x_i}) \right) e^{f(x)} d\mu(x).
  	\end{equation}
  	Finally, observe that for the transition matrix $P$ and the generator $-L = I - P$ of the Glauber dynamics (on $\Omega_0$) we have
  	\begin{align*}
  		\mathcal{E}(e^f,f) &= \IE_{\mu}(e^f(-Lf)) = \int \sum_{y \in \Omega_0} (f(x) - f(y)) P(x,y) e^{f(x)} d\mu(x) \\
  		&= \frac{1}{\abs{\mathcal{I}}} \sum_{i \in \mathcal{I}} \iint (f(x) - f(\overline{x_i}, y)) d\mu(y \mid \overline{x_i}) e^{f(x)} d\mu(x),
  	\end{align*}
  	so that \eqref{eqn:modLSIWithCondProb} may be rewritten as $\Ent_{\mu}(e^f) \le \frac{\abs{\mathcal{I}} }{\alpha_1 \alpha_2^2} \mathcal{E}(e^f,f)$, which yields the claim.
  
  $(3)$: For any $i \in \mathcal{I}$, any $y \in \mathcal{Y}$ with $\mu(y) > 0$ the measure $\mu(\cdot \mid \overline{y}_i)$ is a measure on $\mathcal{X}$ with $\min_{x \in \mathcal{X}} \mu(x \mid \overline{y}_i) \ge \alpha_1$, and so \cite[Remark 6.6]{BT06} yields
  \[
    \Ent_{\mu(\cdot \mid \overline{y}_i)}(g^2) \le \frac{-2 \log(\alpha_1)}{\log(2)} \Var_{\mu(\cdot \mid \overline{y}_i)}(g),
  \]
  which plugged into equation \eqref{thm:eqn:ConclusionRelativeEntropy2} leads to
  \begin{align*}
    \Ent_{\mu}(f^2) \le 2 \frac{-\log(\alpha_1)}{\log(2) \alpha_1 \alpha_2^2} \sum_{i \in \mathcal{I}} \int \Var_{\mu(\cdot \mid \overline{y}_i)}(f(\overline{y}_i,\cdot))  d\mu(y) = 2 \sigma^2 \int \abs{\partial f(y)}^2 d\mu(y).
  \end{align*}
\end{proof}

Next, we prove that all the models defined in Section \ref{subsection:themodels} satisfy the conditions of Theorem \ref{theorem:LSIforSpinSystems}.

\begin{proposition}                  \label{proposition:LSIinERGM}
  If $\bb$ is such that $\frac{1}{2} \Phi'_{\abs{\beta}}(1)< 1$, then $\mu_{\bb}$ satisfies the conditions of Theorem \ref{theorem:LSIforSpinSystems}. 
\end{proposition}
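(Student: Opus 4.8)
The plan is to exhibit $n$-independent constants $\alpha_1,\alpha_2>0$ for which $\mu_{\bb}$ is $(\alpha_1,\alpha_2)$-weakly dependent; the three conclusions are then precisely what Theorem~\ref{theorem:LSIforSpinSystems} asserts. Since the ERGM has full support on $\{0,1\}^{\mathcal{I}_n}$, we have $\tilde\beta(\mu_{\bb}) = I(\mu_{\bb})$, and the conditional laws are Bernoulli: writing $D_e H_{\bb}(x)\coloneqq H_{\bb}(x^{e,1}) - H_{\bb}(x^{e,0})$ for the discrete derivative in the coordinate $e$ — which does not depend on $x_e$ — one has $\mu_{\bb}(x_e = 1\mid\overline x_e) = \psi(D_e H_{\bb}(x))$ with $\psi(t) = e^t(1+e^t)^{-1}$ and $\psi' = \psi(1-\psi)\le 1/4$. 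So the two quantities to control are (i) a uniform bound $\abs{D_e H_{\bb}}\le M$, which forces $I(\mu_{\bb})\ge\psi(-M)$, and (ii) an interdependence matrix whose operator norm is bounded away from $1$.

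For both, the key object is the polynomial representation $N_{G_i}(x) = \sum_{\phi}\prod_{\{a,b\}\in E_i}x_{\{\phi(a),\phi(b)\}}$, the sum over injective $\phi\colon V_i\to[n]$; it is multilinear with nonnegative coefficients, so $\abs{D_e N_{G_i}(x)}\le D_e N_{G_i}(\mathbf 1)$ for $x\in\{0,1\}^{\mathcal{I}_n}$, and $D_e N_{G_i}(\mathbf 1)$ counts the injective maps $\phi\colon V_i\to[n]$ under which some edge of $G_i$ is sent onto $e$, namely $2\abs{E_i}(n-2)_{\abs{V_i}-2}$ (pick the edge of $G_i$ mapped to $e$, its orientation, and an injective placement of the remaining $\abs{V_i}-2$ vertices into $[n]\setminus e$). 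With the crude bound $(n-2)_{\abs{V_i}-2}\le n^{\abs{V_i}-2}$ and the normalization of $H_{\bb}$ this yields $\abs{D_e H_{\bb}(x)}\le 2\sum_i\abs{\beta_i}\abs{E_i} = 2\Phi_{\abs{\bb}}(1)\eqqcolon M$, uniformly in $n$ and $x$; hence $I(\mu_{\bb})\ge\psi(-M) = (1+e^{2\Phi_{\abs{\bb}}(1)})^{-1}\eqqcolon\alpha_1>0$.

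For (ii), $d_{\mathrm{TV}}(\mathrm{Bern}(p),\mathrm{Bern}(q)) = \abs{p-q}$ together with $\psi'\le 1/4$ shows that changing $\overline x_e$ in a single coordinate $f$ alters the conditional law of $x_e$ in total variation by at most $\tfrac14\abs{D_e H_{\bb}(x^{f,1}) - D_e H_{\bb}(x^{f,0})} = \tfrac14\abs{D_f D_e H_{\bb}(x)}$, so $J_{ef}\coloneqq \tfrac14\sup_x\abs{D_f D_e H_{\bb}(x)}$ (with $J_{ee}=0$, as $D_e H_{\bb}$ is independent of $x_e$) is an interdependence matrix, and it is symmetric and nonnegative. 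To bound its row sums, nonnegativity of coefficients again gives $\sum_f\abs{D_f D_e N_{G_i}(x)}\le\sum_f D_f D_e N_{G_i}(\mathbf 1)$, and the right-hand side counts pairs consisting of an injective map realizing $G_i$ through $e$ and a further edge $f$ used by that map; since each such map uses $\abs{E_i}-1$ edges besides $e$, this equals $(\abs{E_i}-1)\,D_e N_{G_i}(\mathbf 1) = 2\abs{E_i}(\abs{E_i}-1)(n-2)_{\abs{V_i}-2}$. Summing over $i$ and using $(n-2)_{\abs{V_i}-2}\le n^{\abs{V_i}-2}$, for every edge $e$ and every $n$,
\[
  \sum_f J_{ef} \le \tfrac14\sum_i\abs{\beta_i}\,2\abs{E_i}(\abs{E_i}-1) = \tfrac12\Phi'_{\abs{\bb}}(1).
\]
Since $J$ is symmetric and nonnegative, $\norm{J}_{2\to2}\le\max_e\sum_f J_{ef}\le\tfrac12\Phi'_{\abs{\bb}}(1) = 1-\alpha_2$ with $\alpha_2\coloneqq 1-\tfrac12\Phi'_{\abs{\bb}}(1)>0$ by hypothesis. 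Hence $\mu_{\bb}$ is $(\alpha_1,\alpha_2)$-weakly dependent and Theorem~\ref{theorem:LSIforSpinSystems} applies.

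The main obstacle I expect is purely combinatorial: getting the ``two-edge'' second differences $D_f D_e N_{G_i}$ precise enough that the row sum of $J$ reproduces exactly the constant $\tfrac12\Phi'_{\abs{\bb}}(1)$ from the hypothesis — keeping track that the factor $\tfrac14$ from $\psi'$, the factor $2$ from edge orientations, and the factor $\abs{E_i}(\abs{E_i}-1)$ from choosing an ordered pair of edges of $G_i$ all combine correctly, and that replacing $(n-2)_{\abs{V_i}-2}$ by $n^{\abs{V_i}-2}$ (harmless asymptotically) makes all bounds uniform in $n$. The remaining ingredients — full support of $\mu_{\bb}$, the Bernoulli form of the conditionals, and $\min(\psi,1-\psi)(t)\ge\psi(-M)$ for $\abs t\le M$ — are routine.
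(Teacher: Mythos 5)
Your proof is correct and follows essentially the same route as the paper: a uniform bound on the discrete derivative of the Hamiltonian gives the lower bound on conditional probabilities, the $1/4$-Lipschitz bound on the logistic function applied to second discrete derivatives yields an interdependence matrix whose row sums are at most $\tfrac12\Phi'_{\abs{\bb}}(1)$, and symmetry gives $\norm{J}_{2\to2}\le\norm{J}_{1\to1}<1$. The only difference is cosmetic: you carry out the combinatorial count $\sum_{f\ne e}D_fD_eN_{G_i}(\mathbf 1)=(\abs{E_i}-1)D_eN_{G_i}(\mathbf 1)$ explicitly (and record an explicit $\alpha_1$), where the paper cites \cite[Lemma 9(c)]{BBS11} and is content with $\partial_e H=O(1)$.
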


\begin{proof}[Proof of Proposition \ref{proposition:LSIinERGM}]
  It is convenient to introduce some notation for the exponential random graph model. For any graph $x \in \mathcal{G}_n$ and any edge $e = (i,j) \in \mathcal{I}_n$ let $x_{e+}$ (resp. $x_{e-}$) be the graph with edge set $E(x_{e+}) = E(x) \cup e$ ($E(x_{e-}) = E(x) \backslash e$ respectively). For any function $f: \mathcal{G}_n \to \IR$ we define the discrete derivative in the $e$-th direction as $\partial_e f(x) = f(x_{e+}) - f(x_{e-}).$  More generally, given edges $e_1, \ldots, e_k$ we define $\partial_{e_1 \cdots e_k}$ recursively, i.e. $
    \partial_{e_1 \cdots e_k} f = \partial_{e_1}\left( \partial_{e_2 \cdots e_k} f \right).$
  It is easy to see that the definition does not depend on the order of the edges and $\partial_{ee} f = 0$. The partial derivatives of the Hamiltonian are given by
  \begin{align*}
    \partial_e H(x) & = 2\beta_1 + n^2 \sum_{i = 2}^s \frac{\beta_i}{n^{\abs{V_i}}} (N_{G_i}(x_{e+}) - N_{G_i}(x_{e-})).
  \end{align*}
  Now we use the fact if $G_i$ injects into $x_{e-}$, then it also injects into $x_{e+}$, and hence the sum is only nonzero if the edge $e$ is essential for the injection, and write $N_{G_i}(x,e)$ to denote the number of injections of $G_i$ into $x$ which use the edge $e \in E(x)$, so that $\partial_e H(x) = 2\beta_1 + n^2 \sum_{i = 2}^s \frac{\beta_i}{n^{\abs{V_i}}} N_{G_i}(x,e)$. Especially this gives
  $\abs*{\partial_e H(x)} = O(1)$. \par
  We want to apply Theorem \ref{theorem:LSIforSpinSystems}. The spin system is given by $\mathcal{Y}_n \coloneqq \{ 0,1 \}^{\mathcal{I}_n}$, and $\mu_n$ is the push-forward of the measure associated to the exponential random graph model $\ERGM(\bb, G_1, \ldots, G_s)$ on $\mathcal{G}_n$. The condition on the conditional distributions is easy to check, since for any $e \in \mathcal{I}_n$ and any $y \in \mathcal{Y}_n$
  \[
    \mu_n(y_e \mid \overline{y}_e) = \frac{1}{2}(1 + \tanh(\partial_e H(y)/2))
  \]
  and $\partial_e H(y) = O(1)$, where the constant depends on $({\abs{\bb}}, G_1, \ldots, G_s)$ only. Hence it remains to prove the second condition. To this end, let again $x = x_{f+}, y = x_{f-}$ be two graphs which differ in one edge $f$ only, and observe that for each other edge $e$
  \begin{align*}
    d_{\mathrm{TV}}(\mu_n(\cdot \mid \overline{x}_e), \mu_n(\cdot \mid \overline{y}_e)) 
    &= \frac{1}{2} \abs{\tanh(\partial_e H(x_{f+})/2) - \tanh(\partial_e H(x_{f-})/2)} \\
    &\le \frac{1}{4} \abs{\partial_{fe} H(x)} \le \frac{n^2}{4} \sum_{i = 2}^s \abs{\beta_i} \frac{N_{G_i}(x,f,e)}{n^{\abs{V_i}}} \\
    &\le \frac{n^2}{4} \sum_{i = 2}^s \abs{\beta_i} \frac{N_{G_i}(K_n,f,e)}{n^{\abs{V_i}}},
  \end{align*}
  i.e. $J_{fe} \le \frac{n^2}{4} \sum_{i = 2}^s \abs{\beta_i} \frac{N_{G_i}(K_n,f,e)}{n^{\abs{V_i}}}$. Thus after summation in $f \in \mathcal{I}_n$ we obtain by \cite[Lemma 9(c)]{BBS11}
  \begin{align*}
    \sum_{f \neq e} J_{fe} \le \frac{n^2}{4} \sum_{i = 2}^s \abs{\beta_i} \sum_{f \neq e} \frac{N_{G_i}(K_n,f,e)}{n^{\abs{V_i}}} = \frac{1}{2} \sum_{i = 2}^s \abs{\beta_i} \abs{E_i}(\abs{E_i}-1) = \frac{1}{2} \Phi'_{\abs{\beta}}(1).
  \end{align*}
  Since the right-hand side is independent of $e \in \mathcal{I}_n$, this yields $\norm{J}_{1 \to 1} \le \frac{1}{2} \Phi_{\abs{\beta}}'(1) < 1$. Moreover, $J$ is a symmetric matrix, so that we have $\norm{J}_{2 \to 2} \le \norm{J}_{1 \to 1}$.
\end{proof}

\begin{proposition}		\label{proposition:LSIinVertexWeighted}
Let $\mu_{\beta}$ be the vertex-weighted exponential random graph model and assume that $\sup_{\lambda \in (0,1)} \abs{\phi_\beta'(\lambda)} < 1$. $\mu_{\beta}$ satisfies the conditions of Theorem \ref{theorem:LSIforSpinSystems}.
\end{proposition}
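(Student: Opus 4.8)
The plan is to verify the two hypotheses of Theorem \ref{theorem:LSIforSpinSystems} with constants depending only on $(\beta_1,\beta_2,p)$, namely the lower bound $\tilde\beta(\mu_\beta)\ge\alpha_1$ on the conditional probabilities and the bound $\norm{J}_{2\to2}\le 1-\alpha_2$ on a suitable interdependence matrix $J$, proceeding in parallel to Proposition \ref{proposition:LSIinERGM}. Write $q(t)\coloneqq(1+e^{-t})^{-1}=\tfrac12(1+\tanh(t/2))$ for the logistic function and $\partial_i H(\sigma)\coloneqq H(\sigma^{i,1})-H(\sigma^{i,0})$ for the discrete derivative in direction $i$, so that $\mu_\beta(\sigma_i=1\mid\overline\sigma_i)=q(\partial_i H(\sigma))$, $q'=q(1-q)$, and $\phi_\beta=q\circ h_\beta$ with $h_\beta$ the function in the definition of $\phi_\beta$. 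Because $H$ is symmetric in the coordinates $k\neq i$, the quantity $\partial_i H(\sigma)$ depends on $\overline\sigma_i$ only through $s_i\coloneqq\sum_{k\neq i}\sigma_k$, say $\partial_i H(\sigma)=F_n(s_i)$. A short computation identifies $F_n$ with an explicit polynomial of degree $2$ whose coefficients carry the powers of $1/n$ prescribed by the Hamiltonian; in particular $M\coloneqq\sup_n\sup_{0\le m\le n}\abs{F_n(m)}$ is finite and depends only on $(\beta_1,\beta_2,p)$, and — here the precise normalisation of \cite{DEY17} enters — one has $F_n(m)=h_\beta(m/n)+O(1/n)$ as well as $F_n(m+1)-F_n(m)=\tfrac1n h_\beta'(m/n)+O(1/n^2)$, both uniformly in $m$.

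Since $\mu_\beta$ has full support, $\tilde\beta(\mu_\beta)=I(\mu_\beta)=\min_i\min_\sigma\min\bigl(q(F_n(s_i)),1-q(F_n(s_i))\bigr)\ge q(-M)>0$, which yields $\alpha_1$. For the interdependence matrix, fix $i\neq j$ and configurations $x,y$ with $\overline x_j=\overline y_j$; then the corresponding values of $s_i$ differ by exactly $1$, and by the mean value theorem
\[ d_{\mathrm{TV}}\bigl(\mu_\beta(\cdot\mid\overline x_i),\mu_\beta(\cdot\mid\overline y_i)\bigr)=\abs{q(F_n(m+1))-q(F_n(m))}=q(\zeta)\bigl(1-q(\zeta)\bigr)\,\abs{F_n(m+1)-F_n(m)} \]
for some $\zeta$ between $F_n(m)$ and $F_n(m+1)$. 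The crude bound $q(1-q)\le\tfrac14$ must be avoided here: combined with $\abs{F_n(m+1)-F_n(m)}=O(1/n)$ it would only give a condition of the type $\abs{\beta_1}+3\abs{\beta_2}<2$, which is strictly stronger than $\sup_{[0,1]}\abs{\phi_\beta'}<1$. Instead one keeps track of the location of $\zeta$: since $\zeta=h_\beta(m/n)+O(1/n)$ and $F_n(m+1)-F_n(m)=\tfrac1n h_\beta'(m/n)+O(1/n^2)$, the right-hand side equals $\tfrac1n\,\phi_\beta(m/n)\bigl(1-\phi_\beta(m/n)\bigr)\abs{h_\beta'(m/n)}+O(1/n^2)=\tfrac1n\abs{\phi_\beta'(m/n)}+O(1/n^2)$, using $\phi_\beta'=\phi_\beta(1-\phi_\beta)\,h_\beta'$. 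Hence, for a suitable $C=C(\beta_1,\beta_2,p)$, setting $J_{ij}\coloneqq\tfrac1n\sup_{x\in[0,1]}\abs{\phi_\beta'(x)}+Cn^{-2}$ for $i\neq j$ and $J_{ii}\coloneqq 0$ defines an interdependence matrix.

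The matrix $J$ is symmetric, so $\norm{J}_{2\to2}\le\norm{J}_{1\to1}=\max_j\sum_i\abs{J_{ij}}\le\sup_{x\in[0,1]}\abs{\phi_\beta'(x)}+Cn^{-1}$. Since $\sup_{[0,1]}\abs{\phi_\beta'}<1$ by assumption, for all $n\ge n_0(\beta)$ this is at most $1-\alpha_2$ with $\alpha_2\coloneqq\tfrac12\bigl(1-\sup_{[0,1]}\abs{\phi_\beta'}\bigr)>0$, so Theorem \ref{theorem:LSIforSpinSystems} applies with $n$-independent constants; the finitely many remaining values of $n$ give fully supported measures on finite product spaces, which anyway satisfy a $\partial$-LSI with a finite constant, so the $\partial$-LSI constant of $(\mu_\beta^{(n)})_n$ is bounded in $n$. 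The one genuinely delicate step is the estimate of $J_{ij}$: one must show that the increment of the conditional success probability under a single-coordinate flip is, to leading order in $1/n$, governed precisely by $\phi_\beta'$ — which requires comparing $F_n$ with $h_\beta(\cdot/n)$ uniformly in the configuration and at the level of first differences — rather than by the coarser quantity $\tfrac14\abs{h_\beta'}$; the boundedness of $F_n$, the resulting conditional-probability bound, and the passage from $\norm{J}_{1\to1}$ to $\norm{J}_{2\to2}$ are routine, exactly as in Proposition \ref{proposition:LSIinERGM}.
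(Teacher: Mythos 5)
Your proof is correct and follows essentially the same route as the paper: a uniform bound on the discrete derivative of the Hamiltonian gives $\tilde\beta(\mu_\beta)\ge\alpha_1$, and the mean value theorem applied to the logistic function of the (order-parameter-dependent) derivative — keeping track of the location so that the increment is $\tfrac1n\abs{\phi_\beta'}+O(n^{-2})$ rather than the crude $\tfrac14$ bound — yields $\norm{J}_{2\to2}\le\norm{J}_{1\to1}\le\sup\abs{\phi_\beta'}+O(n^{-1})<1$ for large $n$. The only cosmetic difference is that the paper applies the mean value theorem to the composite $\lambda\mapsto\exp(h_n(\lambda))/(1+\exp(h_n(\lambda)))$ and then replaces $h_n$ by $h$, while you apply it to the logistic function directly and approximate location and increment separately; your explicit treatment of the finitely many small $n$ is a harmless addition to the paper's ``for $n$ large enough''.
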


\begin{proof}[Proof of Proposition \ref{proposition:LSIinVertexWeighted}]
Since $x_i \in \{0,1\}$ implies $x_i^k = x_i$ for all $k \in \IN$, we can rewrite the Hamiltonian using the order parameter $S \coloneqq \sum_{i = 1}^n x_i$ as
\[
\mu(x) = Z^{-1} \exp \left( \frac{\beta_1}{n} S(S-1) + \frac{\beta_2}{n^2} S(S-1)(S-2) + \log \frac{p}{1-p} S \right).
\]
Hence for $\mathcal{X} \coloneqq \{0,1\}$ and $\mathcal{I}_n \coloneqq \{1,\ldots,n\}$ we are in the situation of Theorem \ref{theorem:AbstractResult}, and it remains to check conditions \eqref{eqn:LowerBoundConditionalProbability} and \eqref{eqn:UpperBoundInterdependenceMatrix}. Observe that we have (with the same notations as in the exponential random graph models)
\[
\mu(1 \mid \overline{x_e}) = \frac{\exp(\partial_e H_n(\overline{x}_e,1))}{1 + \exp(\partial_e H_n(\overline{x}_e,1))} = \frac{1}{2}\left( 1 + \tanh\left( \partial_e H_n(x)/2 \right) \right),
\]
where in this case $\abs{\partial_e H_n(x)} = \abs{\frac{2\beta_1}{n} \sum_{i \neq e} x_i + \frac{3\beta_2}{n^2} \sum_{i \neq j, i,j \neq e} x_ix_j + \log(p/(1-p))}$ is bounded by a constant depending on $\beta$, so that a lower bound on the conditional probabilities holds. The inequality \eqref{eqn:UpperBoundInterdependenceMatrix} is already implicitly proven in the proof of \cite[Lemma 6]{DEY17}, which we modify. Fix a site $e \in \mathcal{I}_n$ and two configurations $x,y$ differing solely at $f \in \mathcal{I}_n$, i.e. $x_f = 1, y_f = 0$, and let $S \coloneqq \sum_{i = 1}^n y_i$. We have
\[
	d_{\mathrm{TV}}(\mu(\cdot \mid \overline{x}_e), \mu(\cdot \mid \overline{y}_e)) = \frac{1}{2}\abs{\tanh(\partial_e H_n(\overline{x}_e, 1)) - \tanh(\partial_e H_n(\overline{y}_e,1))}
\]
and since $H_n$ (and as a consequence $\partial_e H_n$) only depends on the sum $S$ of a vector, by defining $h_n(\lambda) \coloneqq \beta_1 \lambda + \beta_2 \lambda^2 - \frac{\beta_2}{n} \lambda + \log(p/(1-p))$ we can estimate for some $\xi \in (0,1)$
\begin{align}	\label{eqn:JfeEstimate}
	J_{fe} \le \abs*{\frac{\exp(h_n((S+1)/n))}{1+\exp(h_n((S+1)/n))} - \frac{\exp(h_n(S/n))}{1+\exp(h_n(S/n))}} = \frac{1}{n} \abs*{\left(\frac{\exp \circ h_n}{1+\exp \circ h_n}\right)'(\xi)}.
\end{align}
Lastly, if we define $h(\lambda) = \beta_1 \lambda + \beta_2 \lambda^2 + \log(p/(1-p))$, using the Lipschitz property of the function $\exp(x)/(1+\exp(x))$ it can be shown that 
\[
	\abs*{ \frac{\exp \circ h_n}{1+\exp \circ h_n} - \frac{\exp \circ h}{1 + \exp \circ h} } = O(n^{-1})
\]
and $h_n$ can be replaced by $h$ in \eqref{eqn:JfeEstimate} with an error of $O(n^{-2})$. By summing up over $f \neq e$, we obtain for $n$ large enough and all parameters such that 
\begin{align}	\label{eqn:conditionOnDerivative}
\sup_{\lambda \in (0,1)} \abs*{ \frac{\exp \circ h}{1 + \exp \circ h}' } < 1
\end{align}
that \eqref{eqn:UpperBoundInterdependenceMatrix} holds.
\end{proof}

\begin{rem*}
Note that the condition \eqref{eqn:conditionOnDerivative} can be written in terms of the functions defined for exponential random graphs. More specifically, we have for any $x \in \IR$
\[
	\frac{\exp(h(x))}{1 + \exp(h(x))} = \phi_{\tilde{\beta}_1,\tilde{\beta}_2,\tilde{\beta}_3}(x)
\]
for the ERGM $\mu_\beta$ given by the three parameters $\tilde{\beta}_1 = \frac{\log(p/(1-p))}{2}$, $\tilde{\beta}_2 = \frac{\beta_1}{4}, \tilde{\beta}_3 = \frac{\beta_2}{6}$ and the three graphs $G_1$ an edge, $G_2$ a $2$-star and $G_3$ a triangle.
\end{rem*}

\begin{proposition}      \label{proposition:LSIinrandomcoloring}
  Let $G_n = (V_n, E_n)$ be a sequence of graphs with uniformly bounded maximum degree $\Delta$ and $k \ge 2\Delta +1$. Then the conditions of Theorem \ref{theorem:LSIforSpinSystems} hold for the random coloring model $\mu_{G_n}$ with $\alpha_1, \alpha_2$ independent of $n$.
\end{proposition}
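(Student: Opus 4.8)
The plan is to verify the two conditions of Theorem \ref{theorem:LSIforSpinSystems} — namely the lower bound \eqref{eqn:LowerBoundConditionalProbability} on $\tilde\beta(\mu_{G_n})$ and the spectral bound \eqref{eqn:UpperBoundInterdependenceMatrix} on an interdependence matrix — with constants independent of $n$, for the uniform measure on proper $k$-colorings. Since this model has hard constraints (not every coloring is proper), one genuinely needs the full strength of $\tilde\beta$ rather than the simpler $I(\mu)$: one must show that conditioning on a partial \emph{proper} coloring $x_S$ on a set $S$ of vertices, the marginal law of any vertex $i\notin S$ charges every color in its support with probability bounded below. Here the key combinatorial fact is that, because $G_n$ has maximum degree $\Delta$ and $k\ge 2\Delta+1 \ge \Delta+1$, \emph{every} partial proper coloring extends to a full proper coloring (greedily color the remaining vertices one at a time — each has at most $\Delta$ already-colored neighbors, so at least one of the $k$ colors is free), so admissibility is never an obstruction. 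Moreover, for a vertex $i$ with a fixed proper coloring on all \emph{other} vertices, the number of available colors at $i$ is at least $k-\Delta$, so $\mu(c\mid \overline y_i)\ge 1/k$ whenever $c$ is available and the conditional probabilities are of the form $(\#\text{available colors at }i)^{-1}$ — more carefully, one integrates over the unfixed vertices in $S^c\setminus\{i\}$, but in each such full configuration the same bound holds, giving $\tilde\beta_{i,S}(\mu_{G_n})\ge \frac{1}{k}$, hence $\alpha_1 = 1/k$. This is essentially the argument already present in the literature (Jerrum, Salas–Sokal, Dyer–Greenhill) and I would cite it.

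For the interdependence matrix, I would take the natural choice $J_{ij} = 0$ if $\{i,j\}\notin E_n$ (or $i=j$) and bound $J_{ij}$ for neighboring vertices $i\sim j$ by controlling how much the conditional law $\mu(\cdot\mid\overline x_i)$ can move when a single other coordinate is changed. Concretely, if $x,y$ agree off coordinate $j$, then $\mu(\cdot\mid\overline x_i)$ and $\mu(\cdot\mid\overline y_i)$ are both uniform distributions on the available colors at $i$, and these two available-color sets differ by at most one color (only $x_j\ne y_j$ changed among $i$'s neighbors). The total variation distance between the uniform distributions on two sets $A$, $B$ with $|A|,|B|\ge k-\Delta$ and $|A\triangle B|\le 1$ is at most of order $1/(k-\Delta)$; a short computation gives $d_{\mathrm{TV}}\le \frac{1}{k-\Delta}$ (or a comparable explicit constant). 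Hence one may take $J_{ij}\le \frac{1}{k-\Delta}\,\mathbbm{1}_{\{i\sim j\}}$, and then $\norm{J}_{2\to 2}\le \norm{J}_{1\to 1}=\max_i\sum_j J_{ij}\le \frac{\Delta}{k-\Delta}$. The condition $k\ge 2\Delta+1$ gives $k-\Delta\ge \Delta+1 > \Delta$, so $\frac{\Delta}{k-\Delta}\le \frac{\Delta}{\Delta+1}<1$, and we may set $\alpha_2 = 1-\frac{\Delta}{k-\Delta} = \frac{k-2\Delta}{k-\Delta}>0$, which is $n$-independent since $\Delta$ and $k$ are fixed.

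With \eqref{eqn:LowerBoundConditionalProbability} and \eqref{eqn:UpperBoundInterdependenceMatrix} verified for the $n$-independent constants $\alpha_1 = 1/k$ and $\alpha_2 = (k-2\Delta)/(k-\Delta)$, Theorem \ref{theorem:LSIforSpinSystems} applies verbatim and yields the three claimed conclusions for $\mu_{G_n}$. I expect the main obstacle to be the careful handling of the hard-constraint bookkeeping in $\tilde\beta_{i,S}$: one must make sure that the double infimum over admissible $x_S$ and admissible extensions $y_{S^c}$ is legitimately bounded below, which is where the extendability-of-partial-proper-colorings fact (a consequence of $k\ge\Delta+1$) does the real work; the rest — the two-sets total variation estimate and the $\ell^1\to\ell^1$ norm bound — is routine. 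A secondary, purely bookkeeping point is to double-check the precise constant in the $d_{\mathrm{TV}}$ bound between two near-equal uniform distributions, but any constant strictly smaller than $1/\Delta$ on neighbors suffices to close the argument under $k\ge 2\Delta+1$.
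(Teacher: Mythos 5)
Your treatment of the interdependence matrix is essentially the paper's: $J_{ij}\le\frac{1}{k-\Delta}\eins_{i\sim j}$ via the total variation distance between the two uniform distributions on available-color sets, then $\norm{J}_{2\to 2}\le\norm{J}_{1\to 1}\le\frac{\Delta}{k-\Delta}\le\frac{\Delta}{\Delta+1}<1$; the small miscount of the symmetric difference (it can be $2$, not $1$, since one color may be freed and another blocked) is harmless, as you anticipated.

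The gap is in the bound $\tilde\beta_{i,S}(\mu_{G_n})\ge 1/k$. Your argument is to condition further on the colors of all unfixed vertices in $S^c\setminus\{i\}$ and claim that ``in each such full configuration the same bound holds.'' It does not: for an admissible configuration $z$ of $S^c\setminus\{i\}$ in which some neighbor of $i$ \emph{outside} $S$ carries the target color $c$, the inner conditional probability $\mu(\sigma_i=c\mid x_S,z)$ is $0$, not $\ge 1/k$. (This is exactly the point where hard constraints bite; the analogous conditioning argument is only valid for full-support measures, as in Corollary \ref{corollary:RapidMixingGibbsMeasure}.) Moreover the asserted constant is simply false: take $G$ the path $s\,\text{--}\,j\,\text{--}\,i$ with $S=\{s\}$, $x_s=c_s$; then for any color $c\neq c_s$ one computes $\mu(\sigma_i=c\mid\sigma_s=c_s)=\frac{k-2}{(k-1)^2}<\frac1k$, so $\alpha_1=1/k$ cannot be taken. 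The extendability of partial proper colorings (from $k\ge\Delta+1$) only guarantees that the relevant infima run over nonempty sets; it gives no quantitative lower bound on the probability that color $c$ is left free at $i$ by the unconditioned vertices, and that is where the real work lies. The paper closes this by writing $\mu_{\overline S}(c^{v_1}\mid c^S)$ as $\frac{1}{|C|}$ times a telescoping product of ratios $\abs{\Omega_0(G_{e_1,\ldots,e_{j-1}})}/\abs{\Omega_0(G_{e_1,\ldots,e_j})}$ of proper-coloring counts for graphs differing in one edge incident to $v_1$, and invoking Jerrum's ratio bound, which yields the $n$-independent constant $\alpha_1=\frac{1}{k}\left(\frac{\Delta+1}{\Delta+2}\right)^{\Delta}$; your appeal to ``the argument already present in the literature'' would have to be to a bound of this counting type, not to the per-configuration estimate you sketched.
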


\begin{proof}[Proof of Proposition \ref{proposition:LSIinrandomcoloring}]
  This is again an application of Theorem \ref{theorem:LSIforSpinSystems}. Let us first show that
    $J_{v,w} \coloneqq \frac{1}{\Delta+1} \eins_{v \sim w}$
  can be used as an interdependence matrix. To see this, let $c^1, c^2 \in \Omega_0$ be two colorings that differ only in one vertex $v_1$, and $v_2$ be another vertex. In the case $v_1 \sim v_2$ (in $G_n$) the measures $\mu_{v_2}( \cdot \mid \overline{c^i}_{v_2} )$ are uniform on $C \backslash \{ c^i_{v_k} : v_k \sim v_1 \}$ for $i = 1,2$, and hence
  \begin{align*}
     & d_{\mathrm{TV}}(\mu_{v_2}( \cdot \mid \overline{c^1}_{v_2}), \mu_{v_2}( \cdot \mid \overline{c^2}_{v_2})) \\ &= \frac{1}{2} \left( \frac{1}{k-\abs{\{ \overline{c^1}_{v_2} : v_2 \sim v_1 \}}} + \frac{1}{k-\abs{\{ \overline{c^2}_{v_2} : v_2 \sim v_1 \}}} \right) \le \frac{1}{k-\Delta} \le \frac{1}{\Delta+1}.
  \end{align*}
  On the other hand, if $v_2 \not\sim v_1$, then $\mu_{v_2}(\cdot \mid \overline{c^i}_{v_2})$ are equal and thus $J_{v_1, v_2} = 0$. Since $J$ is a symmetric matrix, we obtain
  \[
    \norm{J}_{2 \to 2} \le \norm{J}_{1 \to 1} \le \max_{v_j \in V_n} \sum_{v_i \in V_n} J_{v_i,v_j} \le \frac{\Delta}{\Delta+1} < 1.
  \]
  Moreover, we have to show that $\tilde{\beta}(\mu_n) \ge \alpha_1$ uniformly in $n \in \IN$. Let $S \subsetneq V_n, S \neq \emptyset$ be arbitrary, $v_1 \notin S$ and $c^S \in C^S$ be a proper coloring of $G\mid_S = (S, E_n \cap S \times S)$ and $c^{v_1} \in C \backslash \{ c_{v_2} : v_2 \in S, v_2 \sim v_1 \}$. Using the definition $\Omega_0(G)$ for the set of all proper colorings of an arbitrary graph $G$ (with a fixed number of colors, here $k$), we have
  \begin{align}  \label{eqn:probColoring}
    \mu_{\overline{S}}(c^{v_1} \mid c^S) = \frac{\mu(c^{v_1}, c^S)}{\mu(c^S)} = \frac{\abs{\Omega_0(G \mid_S)}}{\abs{\Omega_0(G \mid_{S \cup v_1})}}.
  \end{align}
  It is clear that $\abs{\Omega_0(G\mid_S)} = \frac{1}{\abs{C}} \abs{\Omega_0(\tilde{G_S})}$, where $\tilde{G_S}$ is obtained by adding an isolated vertex $v_1$ to $S$. Hence we fix the vertex set $S \cup v_1$ and rewrite equation \eqref{eqn:probColoring} as follows. Let $N(v_1, S) = \{ v_2 \in S : v_2 \sim v_1 \} = \{e_1, \ldots, e_l \}$ be the neighbors of $v_1$ in $S$ and for any $e_1, \ldots, e_k \in N(v_1, S)$ let $G_{e_1,\ldots, e_k}$ be the graph with edge set $(E_n \cap S \times S) \cup \{e_1, \ldots, e_k\}$, so that
  \begin{align*}
    \mu_{\overline{S}}(c^{v_1} \mid c^S) = \frac{1}{\abs{C}} \prod_{k = 1}^{l} \frac{\abs{\Omega_0(G_{e_1, \ldots, e_{k-1})}}} {\abs{\Omega_0(G_{e_1, \ldots, e_{k})}}}.
  \end{align*}
  By \cite[equation (2)]{Jer95}, it follows that each of the ratios is bounded from below by a constant depending on $\Delta$, thus resulting in
   $ \mu_{\overline{S}}(c^{v_1} \mid c^S) \ge \abs{C}^{-1} c(\Delta),$
  with a possible choice $c(\Delta) = \left( \frac{\Delta+1}{\Delta+2} \right)^{\Delta}$. The case $S = \emptyset$ is easier, as $\mu_i(c^i) = \frac{1}{\abs{C}}$ by the invariance of the random coloring model induced by a relabeling of the colors $C$.
\end{proof}

\begin{proposition}      \label{proposition:LSIinhardcore}
  Let $G = (V,E)$ be any graph with maximum degree $\Delta$. The conditions of Theorem \ref{theorem:LSIforSpinSystems} hold for the hard core model $\mu_{\lambda}$ with fugacity $\lambda$, and $\alpha_1, \alpha_2$ depend on $\Delta$ only.
\end{proposition}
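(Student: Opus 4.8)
The plan is to verify the two conditions of Theorem \ref{theorem:LSIforSpinSystems} — the lower bound $\tilde{\beta}(\mu_\lambda) \ge \alpha_1$ on the marginal conditional probabilities, and the bound $\norm{J}_{2\to 2}\le 1-\alpha_2$ on a suitable interdependence matrix — with $\alpha_1,\alpha_2$ depending only on $\Delta$ (and the fixed fugacity $\lambda$). I would start by recording the conditional probabilities: for a vertex $v$ and a configuration $\overline{\sigma}_v$ on $V\setminus\{v\}$ with positive marginal, $\mu_\lambda(\sigma_v = 1\mid\overline{\sigma}_v) = 0$ if some neighbour of $v$ is occupied in $\overline{\sigma}_v$, and $\mu_\lambda(\sigma_v = 1\mid\overline{\sigma}_v) = \lambda/(1+\lambda)$ otherwise; in particular $\mu_\lambda(\sigma_v = 0\mid\overline{\sigma}_v)\ge (1+\lambda)^{-1}$ in every case.

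For the interdependence matrix I would take $J_{vw} = \frac{\lambda}{1+\lambda}\,\eins_{v\sim w}$. If $v\not\sim w$, the conditional law of $\sigma_v$ depends only on the spins at the neighbours of $v$, so changing the spin at $w$ leaves it unchanged and $J_{vw} = 0$ is admissible. If $v\sim w$ and $x,y$ differ only at $w$ (both with positive marginal at $v$), then one of the two configurations has $w$ occupied, forcing $\mu_\lambda(\sigma_v = 1\mid\cdot) = 0$ there, while the other puts mass either $0$ or $\lambda/(1+\lambda)$ on $\{\sigma_v = 1\}$; hence $d_{\mathrm{TV}}(\mu_\lambda(\cdot\mid\overline{x}_v),\mu_\lambda(\cdot\mid\overline{y}_v))\le\lambda/(1+\lambda)$. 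Since $J$ is symmetric, $\norm{J}_{2\to2}\le\norm{J}_{1\to1}\le\frac{\Delta\lambda}{1+\lambda}$, which is strictly less than $1$ precisely because $\lambda<\frac{1}{\Delta-1}$; thus \eqref{eqn:UpperBoundInterdependenceMatrix} holds with $\alpha_2 = \frac{1-(\Delta-1)\lambda}{1+\lambda}>0$.

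The main work is the lower bound on $\tilde{\beta}$, and this is the step I expect to require the most care. Fix $S\subsetneq V$, $i\notin S$, an admissible partial configuration $x_S$, and an admissible extension value at $i$. If that value is $0$, then $\mu_\lambda(\sigma_i = 0\mid x_S) = \IE[\mu_\lambda(\sigma_i = 0\mid\overline{\sigma}_i)\mid x_S]\ge(1+\lambda)^{-1}$ by the pointwise bound above. If the value is $1$, then necessarily all neighbours of $i$ in $S$ are unoccupied in $x_S$, so the only neighbours of $i$ that can ever be occupied lie in $T\coloneqq N(i)\cap S^c$, a set of size at most $\Delta$. Conditioning additionally on the spins on $S^c\setminus(T\cup\{i\})$, the conditional law of $(\sigma_i,(\sigma_w)_{w\in T})$ is a hard-core measure on the star $\{i\}\cup T$ subject to fixed external spins, and a direct computation gives $\mu_\lambda(\sigma_i = 1\mid\text{everything outside the closed star of }i) = \frac{\lambda}{\lambda+Z_T}$, where $Z_T$ is the partition function of the leaf spins and satisfies $1\le Z_T\le(1+\lambda)^{\abs{T}}\le(1+\lambda)^\Delta$ (the all-zero leaf configuration is always admissible, and there are at most $\Delta$ leaves). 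Averaging over the external spins consistent with $x_S$ — each such conditioning is valid because $i$ is always free to be occupied — yields $\mu_\lambda(\sigma_i = 1\mid x_S)\ge\frac{\lambda}{\lambda+(1+\lambda)^\Delta}$. Hence $\tilde{\beta}(\mu_\lambda)\ge\alpha_1\coloneqq\frac{\lambda}{\lambda+(1+\lambda)^\Delta}$, which depends only on $\Delta$ and $\lambda$, and both conditions of Theorem \ref{theorem:LSIforSpinSystems} are met. The delicate point is exactly the control of the local partition function $Z_T$: one must confine $i$'s potentially occupied neighbours to the $\le\Delta$ vertices of the star that has been conditioned away, so that $Z_T$ cannot exceed $(1+\lambda)^\Delta$ and $i$ retains a $\Delta$-dependent amount of mass on being occupied.
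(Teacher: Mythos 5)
Your proof is correct, and while the interdependence-matrix half is identical to the paper's (same $J_{vw}=\frac{\lambda}{1+\lambda}\eins_{v\sim w}$, same $\norm{J}_{2\to2}\le\norm{J}_{1\to1}\le\frac{\Delta\lambda}{1+\lambda}<1$ from $\lambda<\frac{1}{\Delta-1}$), your treatment of the lower bound on $\tilde{\beta}$ takes a genuinely different route. The paper first handles $S=\emptyset$ by comparing the global partition function $Z$ with the partition function $Z_A$ of the graph with the closed neighborhood of $v$ removed, obtaining $\frac{\lambda}{2^{\Delta+1}}\le\mu_G(\sigma_v=1)\le\frac{\lambda}{1+\lambda}$ (the upper bound $Z\le 2^{\Delta+1}Z_A$ implicitly uses $\lambda\le 1$, which is fine under the hypothesis), and then reduces general $S$ to this unconditional case by graph surgery: vertices conditioned to be empty are deleted, and the closed neighborhoods of vertices conditioned to be occupied are removed, leaving a hard-core model on an induced subgraph to which the $S=\emptyset$ bound applies. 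You instead condition on all spins outside the closed star of $i$, invoke the spatial Markov property to reduce to an exact computation on the (at most $\Delta$-leaf) star, obtaining $\mu(\sigma_i=1\mid\cdot)=\frac{\lambda}{\lambda+Z_T}$ with $1\le Z_T\le(1+\lambda)^{\Delta}$, and then average over the exterior conditioning; the value $0$ is handled by the pointwise bound $\mu(\sigma_i=0\mid\overline{\sigma}_i)\ge(1+\lambda)^{-1}$ and the tower property. Your argument is more local and avoids both the global $Z$ versus $Z_A$ comparison and the reduction step, and it delivers the slightly sharper explicit constant $\alpha_1=\frac{\lambda}{\lambda+(1+\lambda)^{\Delta}}$ without needing $\lambda\le1$; the paper's reduction, on the other hand, makes the mechanism ``conditioning on admissible partial configurations is again a hard-core model on a subgraph'' explicit, which is the same locality principle you use implicitly. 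Both yield constants depending only on $\Delta$ and $\lambda$ (hence only on $\Delta$ once $\lambda<\frac{1}{\Delta-1}$ is viewed as tied to $\Delta$), so your proposal fully establishes the proposition.
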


\begin{proof}[Proof of Proposition \ref{proposition:LSIinhardcore}]
  Since we are going to require hard-core models corresponding to various graphs, we will write $\mu_G$ to emphasize the graph under consideration. The fugacity $\lambda$ will not change. Let us show that $J_{v_1,v_2} = \frac{\lambda}{1+\lambda} \eins_{v_1 \sim v_2}$ can be used as an interdependence matrix. Let $v_1 \in V$ be a site, $\sigma^1, \sigma^2 \in \mathcal{Y}$ be two admissible configurations differing only at site $v_1$ (without loss of generality $\sigma^1_{v_1} = 1, \sigma^2_{v_1} = 0$), and $v_2 \in V$ be another site. If $v_2 \sim v_1$, then $\mu_G(1 \mid \overline{\sigma^1_{v_1}}) = 0$, whereas $\mu_G(1 \mid \overline{\sigma^1_{v_1}}) = \frac{\lambda}{1+\lambda}$. If $v_2 \not\sim v_1$ we have $\mu_G(\cdot \mid \overline{\sigma^1_{v_1}}) = \mu_G(\cdot \mid \overline{\sigma^2_{v_1}})$. Hence by the symmetry of $J$
  \[
    \norm{J}_{2 \to 2} \le \norm{J}_{1 \to 1} \le \Delta \frac{\lambda}{1+\lambda} < 1
  \]
  which is a consequence of $\lambda < \frac{1}{\Delta - 1}$. \par
  To see that there is a lower bound on the conditional probabilities, let us first consider the case $S = \emptyset$. Let $v \in V$ be arbitrary, write $N(v)$ for the neighborhood of $v$ and $A$ for the complement of $v \cup N(v)$, and observe that
  \begin{align*}
    \mu_G(\sigma_v = 1) & = \mu(\sigma_v = 1, \sigma_{N(v)} = 0) = Z^{-1} \sum_{\tilde{\sigma}_{A}} \mu(\sigma_v = 1, \sigma_{N(v)} = 0, \sigma_{A} = \tilde{\sigma}_{A}) \\
                        & = \lambda Z^{-1} \sum_{\tilde{\sigma}_A} \lambda^{\abs{\tilde{\sigma}_A}} \eqqcolon \lambda Z^{-1} Z_A,
  \end{align*}
  where the summation is over all admissible configurations. Note that due to $\sigma_{N(v)} = 0$ these are actually all admissible configurations of the graph $G\mid_A = (A, E \cap A \times A)$. The normalizing constant can be bounded from above and below by
  \begin{align*}
    Z = \sum_{\tilde{\sigma}_A \text{ adm.}} \lambda^{\abs{\tilde{\sigma}_A}} \sum_{\tilde{\sigma}_{A^c}} \lambda^{\abs{\tilde{\sigma}_{A^c}}} \eins_{(\tilde{\sigma}_A, \tilde{\sigma}_{A^c}) \text{ adm.}} \le 2^{\Delta+1} Z_A
  \end{align*}
  and $ Z \ge (\lambda + 1) Z_A,$ which follows by only considering the configurations $\sigma_v = 1, \sigma_{N(v)} = 0$ and $\sigma_{v \cup N(v)} = 0$. As a consequence, we have
  \begin{align}                \label{eqn:UpperLowerBound}
    \frac{\lambda}{2^{\Delta+1}} \le \mu(\sigma_i = 1) \le \frac{\lambda}{\lambda+1}.
  \end{align}
  The case $S \neq \emptyset$ follows by a reduction argument. Let $\tilde{\sigma}_S$ be an admissible configuration of $G\mid_S$ and let $T \coloneqq \{ w \in S : \sigma_w = 0 \} \subset S$ be the free sites in $S$. By explicitly calculating the conditional probability one can see that for any configuration $\sigma_{S^c}$ and any $v \notin S$ we have
   $ \mu_G(\sigma_v = 1 \mid \sigma_S = \tilde{\sigma}_S) = \mu_{G\mid_{V \backslash T}}(\sigma_v = 1 \mid \sigma_{S \backslash T} = (1,\ldots,1)).$
  The graph $G\mid_{V\backslash T}$ can be decomposed into three parts: $(T, N(T), R)$, where $N(T) = \cup_{v \in T} N(v)$ and
  \[
    \mu_G(\sigma_v = 1 \mid \sigma_S = \tilde{\sigma}_S) = \mu_{G \mid_R}(\sigma_v = 1),
  \]
  which has an upper and lower bound by inequality \eqref{eqn:UpperLowerBound} and so we ultimately get $\tilde{\beta}(\mu_G) \ge c(\Delta, \lambda)$.
\end{proof}

\section{Proofs of the mixing time results} \label{section:proofMixingTimes}
With the results of the last section, the proof of the mixing time results is straightforward.

\begin{proof}[Proof of Theorem \ref{theorem:GlauberDynamicsRapidMixing}]
	Propositions \ref{proposition:LSIinERGM}, \ref{proposition:LSIinVertexWeighted}, \ref{proposition:LSIinrandomcoloring} and \ref{proposition:LSIinhardcore} show that all the models satisfy the conditions of Theorem \ref{theorem:LSIforSpinSystems}. Thus, a $\mathrm{mLSI}(\rho_0)$ with $\rho_0 \le C \abs{\mathcal{I}_n}$ holds, where the constant depends on the underlying parameters, but not on $n$. Hence the assertion follows immediately from Theorem \ref{theorem:AbstractResult}.
\end{proof}

\begin{proof}[Proof of Theorem \ref{theorem:AbstractResult}]
	The $\mathrm{mLSI}(\rho_0)$ property follows immediately from Theorem \ref{theorem:LSIforSpinSystems}. Equation \eqref{eqn:ExponentialDecayRelativeEntropy} is a consequence of \cite[Theorem 2.4]{BT06}, noting that our $\mathrm{mLSI}$ constant $\rho_0$ corresponds to $1/\rho_0$ in \cite{BT06}.
		
	Now let $(\mu_n)_n$ be a sequence of spin systems with sites $(\mathcal{I}_n)_n$, spins $\mathcal{X}$, and define $\mathcal{Y}_n = \supp(\mu_n) \subset \mathcal{X}^{\mathcal{I}_n}$. To prove rapid mixing, note that 
	\[
		\frac{2}{\rho_0} = \inf \left\lbrace \frac{\mathcal{E}(e^f,f)}{\Ent_{\mu_n}(e^f)} : f \neq \mathrm{const} \right\rbrace \ge \frac{\alpha_1 \alpha_2^2}{2 \abs{\mathcal{I}_n}}.
	\]
	If we denote $\mu_n^* = \min_{y \in \mathcal{Y}_n} \mu_n(y)$, \cite[Corollary 2.8]{BT06} yields
	\[
		d_{\mathrm{TV}}(P^t(y,\cdot), \mu_n)^2 \le 2 \log\Big( \frac{1}{\mu_n^*} \Big) \exp(-2 \rho_0^{-1} t) \le 2 \log\Big( \frac{1}{\mu_n^*} \Big) \exp\Big(- \frac{\alpha_1 \alpha_2^2}{2 \abs{\mathcal{I}_n}} t \Big).
	\]
	Hence for $t = \frac{2\abs{\mathcal{I}_n}}{\alpha_1 \alpha_2^2} \cdot \left( \log 2 + 2 + \log \log ( 1/\mu_n^* ) \right)$ we have $\max_{y \in \mathcal{Y}_n} d_{\mathrm{TV}}(P^t(y,\cdot), \mu_n)^2 \le e^{-2}$, i.e. $t_{\mathrm{mix}}(n) \le \frac{2\abs{\mathcal{I}_n}}{\alpha_1 \alpha_2^2} \cdot \left( \log 2 + 2 + \log \log ( 1/\mu_n^*) \right)$. 
	
	The last step is to show $\log \log 1/\mu_n^* = O(\log \abs{\mathcal{I}_n})$. This follows using the definition of $\alpha_1$, since by conditioning and iterating we obtain for any $y \in \mathcal{Y}_n$
	\[
		\frac{1}{\mu_n(y)} = \frac{1}{\mu_n(y_{i} \mid \overline{y_i})} \mu_n(\overline{y_i})^{-1} \le \alpha_1^{-1} \mu_n(\overline{y_i})^{-1} \le \ldots \le \alpha_1^{-\abs{\mathcal{I}_n}}.
	\]
	Hence, $t_{\mathrm{mix}} = O(\abs{\mathcal{I}_n} \log \abs{\mathcal{I}_n})$.
\end{proof}

\begin{proof}[Proof of Corollary \ref{corollary:RapidMixingGibbsMeasure}]
If there are no hard constraints, i.e. $\mu$ has full support, then $\tilde{\beta}(\mu)$ can be simplified to
\[
	\tilde{\beta}(\mu) = I(\mu) \coloneqq \min_{i \in \mathcal{I}} \min_{y \in \mathcal{Y}} \mu_n( y_i \mid \overline{y_i}).
\]
This can be shown by conditioning for any $S \subset \mathcal{I}$ and any $x_S \in \mathcal{X}^S$
\begin{align*}
\mu(y_i \mid x_S) = \mu(x_S)^{-1} \sum_{z \in \mathcal{X}^{\mathcal{I} \backslash (S \cup i)}} \mu(y_i \mid x_S, z) \mu(x_S, z) \ge I(\mu),
\end{align*}
and the reverse inequality follows by taking $S = \mathcal{I} \backslash \{ j \}$.
\end{proof}

\section{Proofs of the concentration of measure results}    \label{section:ProofsCoM}

\begin{proof}[Proof of Theorem \ref{theorem:LSIforAllModels}]
The proof is an application of Theorem \ref{theorem:LSIforSpinSystems}, see the proof of Theorem \ref{theorem:GlauberDynamicsRapidMixing}.
\end{proof}

We will use Theorem \ref{theorem:fdPolynomials} to prove several results for finite spin systems. 
The most important cases of Theorem \ref{theorem:fdPolynomials} will be $d = 1,2,3$. It is easy to check that
\begin{align*}
  f_{1,A} & \coloneqq \sum_{i \in \mathcal{I}} A_i \tilde{f}_i, \displaybreak[2] \qquad
  f_{2,A} \coloneqq \sum_{i,j \in \mathcal{I}} A_{ij} (\tilde{f}_i \tilde{f}_j - \tilde{\mu}_{ij}), \displaybreak[2]\\
  f_{3,A} & \coloneqq \sum_{i,j,k \in \mathcal{I}} A_{ijk} \left( \tilde{f}_i \tilde{f}_j \tilde{f}_k - \tilde{\mu}_{ijk}- \tilde{f}_i \tilde{\mu}_{jk} - \tilde{f}_j \tilde{\mu}_{ik} - \tilde{f}_k \tilde{\mu}_{ij} \right).
\end{align*}

\subsection{Exponential random graph model}
We apply the general result on the concentration of the polynomials $f_{d,A}$ from \eqref{eqn:fdApolynomial} with the spin function $f(x) = x$. Before we prove Theorem \ref{theorem:ERGMtriangleLpandTails} this way (corresponding to $d=3$), let us give a simple example which already demonstrates some of the arguments we will use.

\begin{exa*}\label{exa:ergmd12}
  Let $\mu_{\bb}$ be an ERGM satisfying a $\partial$-$\textrm{LSI}(\sigma^2)$, and let $T_1(x) \coloneqq \sum_{e \in \mathcal{I}_n} x_e$ be the number of edges. Moreover, for any two disjoint subsets $S_1, S_2 \subset \{1,\ldots,n\}$, write
  $
    C(S_1,S_2) \coloneqq \{ e = (i,j) \in \mathcal{I}_n : \{i,j \} \cap S_1 \neq \emptyset, \{i,j\} \cap S_2 \neq \emptyset \}
  $
  and let $T_{S_1,S_2} \coloneqq \sum_{e \in \mathcal{I}_n} \eins_{C(S_1,S_2)}(e) x_e$ be the number of edges between $S_1$ and $S_2$.
  Then,
  \begin{align}
    \mu_{\bb}(\abs{T_1 - \IE_{\mu_{\bb}(T_1)}} \ge t)                       & \le  2 \exp \left( - \frac{\log(2) t^2}{e^2 \sigma^2 n(n-1)} \right),\label{eqn:countingedges}                  \\
    \mu_{\bb}(\abs{T_{S_1,S_2} - \IE_{\mu_{\bb}}(T_{S_1,S_2})} \ge t) & \le 2 \exp \left( - \frac{\log(2) t^2}{2 e^2 \sigma^2 \abs{S_1} \abs{S_2}} \right)\label{eqn:countingedgesbds}.
  \end{align}
  In particular, setting $\eta \coloneqq \IE_{\mu_{\bb}}(x_e)$ for $e \in \mathcal{I}_n$ arbitrary, we obtain a strong law of large numbers, i.\,e. $T_1/\abs{\mathcal{I}_n} \to \eta$ a.s.
\end{exa*}

\begin{proof}
  Noting that $T_1 - \IE_{\mu_{\bb}}(T_1) = f_{1,A}$ for $A = (1,\ldots,1)$, \eqref{eqn:countingedges} readily follows from Theorem \ref{theorem:fdPolynomials}. Equation \eqref{eqn:countingedgesbds} follows by similar arguments. 
  To prove the strong law of large numbers, first note that by the intrinsic symmetry (i.\,e. a relabeling of the vertices $\{1,\ldots,n\}$ and a respective relabeling of the edges will result in the same probability law), it is easy to see that $\mu_{\bb}(x_e)$ does not depend on $e \in \mathcal{I}_n$. Thus, $\eta$ is well-defined and $\IE_{\mu_{\bb}}(T_1) = \abs{\mathcal{I}_n} \eta$. Now, \eqref{eqn:countingedges} yields that $T_1/\abs{\mathcal{I}_n}$ converges to $\eta$ in probability, and the rate of convergence is of order $\exp(-\Omega(n^2))$, which in turn implies convergence almost surely.
\end{proof}

In a similar vein, we may now prove Theorem \ref{theorem:ERGMtriangleLpandTails}. To this end, we shall express the number of triangles as a linear combination of polynomials of the type $f_{d,A}$.

\begin{proof}[Proof of Theorem \ref{theorem:ERGMtriangleLpandTails}]
  For the proof fix $n \in \IN$ and let $X \sim \mu_{\bb}$. Moreover, we will write $\mu_{\Delta} \coloneqq \IE X_{e_1 e_2 e_3}, \tilde{\mu}_{\Delta} \coloneqq \tilde{X}_{e_1 e_2 e_3}$ for some triangle $\{e_1, e_2, e_3\}$, $\mu_2 \coloneqq \IE X_{e_1 e_2}, \tilde{\mu}_2 \coloneqq \IE \tilde{X}_{e_1 e_2}$ for two neighboring edges $e_1 \neq e_2$, $\mu_1 \coloneqq \IE X_e$ and lastly $(e_1, e_2, e_3) \in \mathcal{T}_n$ to indicate that $\{e_1, e_2, e_3\} \in \mathcal{T}_n$ and $e_1 < e_2 < e_3$ (with some fixed partial ordering of the edges). 
  
  Now it is not hard to verify that we can decompose $T_3(X) - \IE T_3(X)$ as
  \begin{align}  \label{eqn:T3decomposition}
    T_3(X) - \IE T_3(X) & = f_3(X) + \mu_1 f_2(X) + (n-2) \mu_2 f_1(X),
  \end{align}
  using the auxiliary functions
  \begin{align*}
  f_1(X) &\coloneqq \sum_{e \in \mathcal{I}_n} \tilde{X}_e, \qquad f_2(X)        \coloneqq \sum_{\substack{e_1 < e_2 \\ e_1 \cap e_2 \neq \emptyset}} \left(\tilde{X}_{e_1 e_2} - \tilde{\mu}_{e_1 e_2} \right), \\
    f_3(X)         & \coloneqq \sum_{(e_1, e_2, e_3) \in \mathcal{T}_n} \left( \tilde{X}_{e_1 e_2 e_3} - \tilde{\mu}_{\Delta} - \tilde{X}_{e_1} \tilde{\mu}_2 - \tilde{X}_{e_2} \tilde{\mu}_2 - \tilde{X}_{e_3} \tilde{\mu}_2 \right).
  \end{align*}

  Hence, after symmetrization of the sum, the triangle count is the sum of three terms $f_{d,A}$ for different tensors $A_3, A_2, A_1$ given by $(A_3)_{efg} = 1/6 \cdot \eins_{\{e,f,g\} \in \mathcal{T}_n}, (A_2)_{ef} = \frac{\mu_1}{2} \eins_{e \cap f \neq \emptyset}$ and $(A_1)_e = (n-2)\mu_2$. An easy counting argument shows $\norm{A_3}_2 \sim n^{3/2}/6$, $\norm{A_2}_2 \sim \mu_1 n^{3/2}/2$ and $\norm{A_1}_2 \sim \mu_2 n^2/\sqrt{2}$.

  An application of Theorem \ref{theorem:fdPolynomials} yields
  \begin{align*}
    &\norm{T_3(X) - \IE T_3(X)}_p \le (\sigma^2 \norm{A_3}_2^{2/3} p)^{3/2} + (\sigma^2 \norm{A_2}_2 p) + (\sigma^2 \norm{A_1}_2^2 p)^{1/2} \\
    &\norm{T_3(X) - \IE T_3(X) - (n-2)\mu_2 f_1(X)}_p \le (\sigma^2 \norm{A_3}_2^{2/3} p)^{3/2} + (\sigma^2 \norm{A_2}_2 p).
  \end{align*}
  The assertion now follows as in the proof of Theorem \ref{theorem:com}.
\end{proof}

\begin{rem*}
  In the case of the Erd{\"o}s--R{\'e}nyi model, the decomposition \eqref{eqn:T3decomposition} is simply the Hoeffding decomposition, with $(n-2)\mu_2 f_1(X)$ as the first order and $\mu_1 f_2(X)$ as the second order Hoeffding term, which also coincides with the decomposition of the function $T_3(X)$ in $L^2$ in terms of the orthonormal basis $(f_S(X))_{S \subset \mathcal{I}_n}, f_S(X) = (p(1-p))^{-\abs{S}/2} \prod_{s \in S} (X_s - p)$, see also Section \ref{subsection:ER}.
\end{rem*}

\begin{proof}[Proof of Corollary \ref{corollary:CLTERGM}]
  Using \eqref{eqn:T3minusf1fluctuations}, it can be shown that for any $t > 0$
  \begin{align}  \label{eqn:T3Minusf1tozero}
    \mu_{\bb}\left( \abs*{\frac{T_3 - \IE_\mu T_3 - (n-2)\mu_2 f_1}{(n-2)\mu_2 \sqrt{\binom{n}{2}}}} \ge t \right) \to 0 \text{ for } n \to \infty,
  \end{align}
  and thus
  \[
    \frac{T_3 - \mu_{\bb}(T_3)}{(n-2)\mu_2 \sqrt{\binom{n}{2}}} = \frac{T_3 - \mu_{\bb}(T_3) - (n-2)\mu_2 f_1}{(n-2)\mu_2 \sqrt{\binom{n}{2}}} + \frac{1}{\sqrt{\binom{n}{2}}} f_1 \Rightarrow \mathcal{N}(0,\sigma^2)
  \]
  by \cite[Theorem 3.1]{Bil68} and the assumption.
\end{proof}

\begin{rem*}
  Actually equation \eqref{eqn:T3Minusf1tozero} can be quantified; by \eqref{eqn:T3minusf1fluctuations}, the convergence to $0$ is of the order $\exp(-\Omega(n^{1/3}))$, which also implies
  \[
    \bigg((n-2) \mu_2 \binom{n}{2}^{1/2}\bigg)^{-1}\left( T_3 - \IE_\mu T_3 - (n-2) \mu_2 f_1 \right) \to 0 \text{ almost surely}.
  \]
\end{rem*}

\begin{proof}[Proof of Proposition \ref{proposition:WassersteinDistanceTriangles}]
	By the triangle inequality for $d_W$ it suffices to prove that $d_W(\tilde{T_3}, \tilde{L}) \le Cn^{-1/2}.$
	For any $1$-Lipschitz function we have due to $\abs{f(x) - f(y)} \le \abs{x-y}$, Theorem \ref{theorem:ERGMtriangleLpandTails} and a change of variables $s = n^{1/2} t \mu_2$
	\begin{align*}
		\Big \lvert \IE_{\mu_{\bb}} f(\tilde{T_3}) - \IE_{\mu_{\bb}} f(\tilde{L}) \Big \rvert 
		&\le \int_0^\infty \mu_{\bb}\Big( \abs{T_3(x) - (n-2) \mu_2 \sum_{e \in \mathcal{I}_n} \tilde{x}_e} \ge (n-2)\mu_2 \sqrt{\binom{n}{2}} t \Big) dt \\
		&\le 2 \int_0^\infty \exp \Big( - \frac{1}{C} \min\Big( (\mu_2 n^{1/2} t)^{2/3}, \mu_2 n^{1/2} t \Big)\Big) dt \\
		&\le 2 n^{-1/2} \mu_2^{-1} \int_0^\infty \exp \Big( - \frac{1}{C} \min(s^{2/3}, s) \Big) ds.
	\end{align*}
  Taking the supremum over all $f \in \mathrm{Lip}_1$ finishes the proof.
\end{proof}

\subsection{Central limit theorems for subgraph counts in Erd{\"o}s--R{\'e}nyi graphs}    \label{subsection:ER}
For the Erd{\"o}s--R{\'e}nyi model and for $p \in (0,1)$ we define $\sigma^2(p) \coloneqq \frac{\log(1-p) - \log(p)}{1-2p}$. Since $\mu_{n,p}$ is a product measure on $\{0,1\}^{n(n-1)/2}$, by the tensorization property and \cite[Theorem A.2]{DSC96} we have
\begin{align}      \label{eqn:LSIpartialER}
  \Ent_{\mu_{n,p}}(f^2) \le \sigma^2(p) \int \sum_{i \neq j} \Var_{\mu_{ij}}(f) d\mu_n = \sigma^2(p) \int \abs{\partial f}^2 d\mu_{n,p},
\end{align}
i.\,e. $\mu_{n,p}$ satisfies an $\mathrm{LSI}(\sigma^2(p))$. Note that as $p \to 0$ we have $\sigma^2(p) \sim \log(1/p),$ i.e. the logarithmic Sobolev constant tends to infinity, however at a logarithmic scale.

\begin{proof}[Proof of Theorem \ref{theorem:ERCLTallgraphs}]
Let $a \coloneqq (\abs{\mathrm{Aut}(G)})^{-1}$. We make use of the $L^2(\mu_{n,p})$ (or Hoeffding) decomposition of $T_G$ with respect to the orthonormal basis $(f_S)_{S \subset \mathcal{I}_n}$ given by $
    f_S = (p(1-p))^{-\abs{S}/2} \prod_{s \in S} (X_s - p),$ i.e.
  \begin{equation*}
    T_G = \sum_{S \subset \mathcal{I}_n} \skal{T_G, f_S} f_S = \sum_{k = 0}^{\abs{E}} \sum_{\substack{S \subset \mathcal{I}_n \\ \abs{S} = k}} \skal{T_G, f_S} f_S \eqqcolon \sum_{k = 0}^{\abs{E}} T_k = \mu_{n,p}(T_G) + \sum_{k = 1}^{\abs{E}} T_k.
  \end{equation*}
  For arbitrary $k \in \{1, \ldots, \abs{E}\}$ we obtain from the representation \eqref{eqn:T_G}
  \[
    T_k 
    = a(p(1-p))^{-k} \sum_{\{f_1, \ldots, f_k\}} \sum_{f: V \to [n] \text{ inj}} \skal{\prod_{e \in E} X_{f(e)}, \tilde{X}_{f_1 \cdots f_k} } \tilde{X}_{f_1 \cdots f_k}.
  \]
  For fixed $f_1, \ldots, f_k$ the scalar product is zero unless the injection uses all edges $f_1, \ldots, f_k$, giving
  \begin{align*}
    T_k & = a (p(1-p))^{-k} \sum_{\{f_1, \ldots, f_k\}} \tilde{X}_{f_1 \cdots f_k} \sum_{f: V \to [n] \text{ inj. uses edges } f_1, \ldots, f_k} p^{\abs{E}-k} (p(1-p))^{k} \\
        & = a p^{\abs{E}-k} \sum_{\{f_1 \ldots f_k\}} \tilde{X}_{f_1 \cdots f_k} N_G(f_1, \ldots, f_k),
  \end{align*}
  where $N_G(f_1, \ldots, f_k) \coloneqq \abs{\{f: V \to [n] \text{ inj. }: f \text{ uses edges } f_1, \ldots, f_k \}}$. Especially we have 
  \[
    T_1 = \sum_{e} \skal{T_G,f_{\{e\}}} f_{\{e\}} = 2a\abs{E} p^{\abs{E}-1} (n-2)_{\abs{V}-2} \sum_{e} \tilde{X}_{e}.
  \]
  We now claim that
  \[
    \frac{\sum_{k = 2}^{\abs{E}} T_k}{2a \abs{E}(n-2)_{\abs{V}-2}p^{\abs{E}-1}\sqrt{\binom{n}{2}p(1-p)}} \to 0 \text{ in probability},
  \]
  from which the result immediately follows, since the normalized first order Hoeffding term converges weakly to a standard normal distribution by the central limit theorem for i.i.d. random variables. \par
  Let us further split the $k$-th Hoeffding term. Denote by $\alpha(f_1, \ldots, f_k)$ the number of vertices that are used in the graph induced by the edge set $\{f_1, \ldots, f_k\}$ and let $\alpha_k$ denote the minimal number of vertices in a subgraph of $G$ with $k$ edges. Clearly, $\alpha(f_1, \ldots, f_k) \in \{ \alpha_k, \ldots, 2k \wedge \abs{V} \}$, where $a \wedge b \coloneqq \min(a,b)$. This results in the decomposition
  \begin{align*}
    T_k & = \sum_{\alpha = \alpha_k}^{\abs{V} \wedge 2k} T_{k,\alpha}\qquad \text{and}\qquad \tilde{T}_G \coloneqq T_G - \mu_{n,p}(T_G) - f_1 = \sum_{k = 2}^{\abs{E}} \sum_{\alpha = \alpha_k}^{\abs{V} \wedge 2k} T_{k,\alpha}.
  \end{align*}
  Using the $L^q(\mu_{n,p})$ estimates from Proposition \ref{theorem:fdPolynomials} yields for all $q \ge 2$
  \begin{align*}
    \norm{\tilde{T}_G}_q & \le \sum_{k = 2}^{\abs{E}} \sum_{\alpha = \alpha_k}^{\abs{V} \wedge 2k} \norm{T_{k,\alpha}}_q \le \sum_{k = 2}^{\abs{E}} \sum_{\alpha = \alpha_k}^{\abs{V} \wedge 2k} \left( \sigma^2(p) p^{\abs{E}-k} \norm{A^{(k,\alpha)}}_2^{k/2} q \right)^{2/k}.
  \end{align*}
  Let $c(n,p) \coloneqq (\abs{E}(n-2)_{\abs{V}-2}p^{\abs{E}-1}\sqrt{\binom{n}{2}p(1-p)})$. The $L^q$ estimates can be used to show the multilevel concentration inequality (as in the proof of Theorem \ref{theorem:com})
  \begin{align*}
    \mu_{n,p}\left( c(n,p)^{-1} \abs{\tilde{T}_G} \ge t \right) \le 2 \exp \left( - \frac{1}{C \sigma^2(p)} \min_{2 \le k \le \abs{E}} \min_{\alpha_k \le \alpha \le 2k \wedge\abs{V}} t^{2/k} h_{n,k,\alpha}^{2/k} \right)
  \end{align*}
  with $h_{n,k,\alpha} \coloneqq \frac{c(n,p)}{p^{\abs{E}-k} \norm{A^{(k,\alpha)}}}_2$, and it remains to prove that
  $
    \min_{k = 2, \ldots, \abs{E}} h_{n,k,\alpha} \to \infty
  $.
  To this end, we estimate $\norm{A^{(k,\alpha)}}_2$ from above as follows. If there are $\alpha$ vertices used by the edges $f_1, \ldots, f_k$, there are at most $n^{\abs{V}-\alpha}$ ways to have an injection of $V$ into $[n]$ using the edges $f_1, \ldots, f_k$, and there are at most $n^\alpha$ such combinations of $f_1, \ldots, f_k$, and thus
  \[
    \norm{A^{(k,\alpha)}}_2 \le n^{\abs{V}-\alpha} n^{\alpha/2} = n^{\abs{V} - \alpha/2}.
  \]
  For $k \ge 2$ this gives for some constant $c(G)$ depending on the graph $G$
  \begin{align*}
    h_{n,k,\alpha} &= \frac{c(n,p)}{p^{\abs{E}-k} \norm{A^{(k,\alpha)}}_2} \ge c(G) \frac{n^{\abs{V}-1}p^{\abs{E}-1/2}}{n^{\abs{V}-\alpha/2} p^{\abs{E}-k}} \ge c(G) \left( n^{\alpha-2} p^{2k-1} \right)^{1/2} 
  \end{align*}
  leading to the condition $\min_{k = 2, \ldots, \abs{E}} \min_{\alpha \in \{\alpha_k, \ldots, 2k \wedge \abs{V}\}} n p^{(2k-1)/(\alpha-2)} = np^{d'(G)} \to \infty$.
\end{proof}

\subsection{Proofs of the general results}
\begin{proof}[Proof of Theorem \ref{theorem:com}]
  First, note that since $\mu$ satisfies a $\partial\mathrm{-LSI}(\sigma^2)$, by \cite[Proposition 2.4]{GSS18} we obtain for any $p \ge 2$
  \begin{align}  \label{eqn:momentinequality2}
    \norm{f - \IE_\mu f}_p \le (\sigma^2 p)^{1/2} \norm{\mathfrak{h} f}_p.
  \end{align}
  Next we iterate \eqref{eqn:momentinequality2} using $\abs{\mathfrak{h}\abs{\mathfrak{h}^{(d-1)} f}} \le \abs{\mathfrak{h}^{(d)} f}$ (cf. \cite[Lemma 2.3]{GSS18}), leading to
  \begin{align}    \label{eqn:Lpinequalities}
    \norm{f- \IE_\mu f}_p \le \sum_{k = 1}^{d-1} (\sigma^2 p)^{k/2} \norm{\mathfrak{h}^{(k)} f}_2 + (\sigma^2p)^{d/2} \norm{\mathfrak{h}^{(d)} f}_p.
  \end{align}
  To prove the multilevel concentration bounds \eqref{eqn:multilevelconcentration}, we use methods outlined in \cite[Theorem 7]{Ad06} and \cite[Theorem 3.3]{AW15}. To sketch the method in a slightly more general situation, assume that there are constants $C_1, \ldots, C_d \ge 0$ such that for any $p \ge 2$
  \[
    \norm{f-\IE_\mu f}_p \le \sum_{k = 1}^{d} (C_k p)^{k/2}.
  \]
  Let $N \coloneqq \abs{\{ n : C_n > 0\}}$ and $r \coloneqq \min \{ k \in \{1,\ldots,d\} : C_k > 0 \}$. By Chebyshev's inequality we have for any $p \ge 1$
  \begin{equation}  \label{eqn:ChebyshevWithLp}
    \mu ( \abs{f-\IE_\mu f} \ge e \norm{f-\IE_\mu f}_p) \le \exp(-p).
  \end{equation}
  Now consider the function
  \[
    \eta_f(t) \coloneqq \min \left\{ \frac{t^{2/k}}{C_k} \colon k = 1, \ldots, d \right\},
  \]
  with $\frac{x}{0}$ being understood as $\infty$. If we assume $\eta_f(t) \ge 2$, we can estimate $
    e \norm{f - \IE_\mu f}_{\eta_f(t)} \le e \sum_{k = 1}^{d} \eins_{C_k \neq 0} t = Net,$
  so that an application of equation \eqref{eqn:ChebyshevWithLp} to $p = \eta_f(t)$ yields
  \[
    \mu(\abs{f- \IE_\mu f} \ge (Ne)t) \le \mu(\abs{f- \IE_\mu f} \ge e\norm{f - \IE_\mu f}_{\eta_f(t)}) \le \exp\left( - \eta_f(t) \right).
  \]
  Combining it with the trivial estimate $\mu(\cdot) \le 1$ (in the case $\eta_f(t) \le 2$) gives
  \[
    \mu(\abs{f- \IE_\mu f} \ge (Ne)t) \le e^2 \exp(-\eta_f(t)).
  \]
  To remove the $Ne$ factor, rescaling the function by $Ne$ and using the estimate $\eta_{(Ne)f}(t) \ge \frac{\eta_{f}(t)}{(Ne)^{2/r}}$ yields
  \[
    \mu(\abs{f - \IE_\mu f} \ge t) \le e^2 \exp \left( - \frac{1}{(Ne)^{2/r}} \eta_f(t) \right).
  \]
  In a last step, note that $e^2\exp(- (Ne)^{-2/r} \eta(t)) \le 2 \exp( - \log(2) (Ne)^{-2/r} \eta(t)/2)$ in the nontrivial regime $(Ne)^{-2/r} \eta(t) \ge 2$, so that
  \[
    \mu(\abs{f - \IE_\mu f} \ge t) \le 2 \exp \left( - \frac{\log(2)}{2(Ne)^{2/r}} \eta_f(t) \right).
  \]
\end{proof}

To prove Theorem \ref{theorem:fdPolynomials}, let us introduce another notation. For any indices $l_1,\ldots,l_s \in \mathcal{I}$ and $s$ distinct indices $k_1,\ldots,k_s \in \{1,\ldots,d \}$ let $A^{k_1=l_1,\ldots,k_s=l_s}$ be the $(d-s)$-tensor with fixed entries $k_i = l_i$ for all $i = 1,\ldots,s$. For example, if $A = (A_{ijkl})$ is a $4$-tensor, $A^{2=j, 3=i}$ is the $2$-tensor given by $A^{2=j,3=i}_{kl} = A_{kjil}$. Clearly, if $A$ is symmetric, then $A^{k_1 = l_1, \ldots, k_s = l_s}$ is symmetric; and if $A$ has a vanishing diagonal, then so has $A^{k_1 = l_1, \ldots, k_s = l_s}$.

\begin{proof}[Proof of Theorem \ref{theorem:fdPolynomials}]
  To see that $\IE_\mu f_{d,A} = 0$ fix $i_1,\ldots,i_d$ and an arbitrary decomposition $P \in \mathcal{P}(\{i_1,\ldots, i_d\})$. If $N(P) = 1$, then $g_P$ has mean zero. On the other hand, if $N(P) \ge 2$, then $P = \{\{i_1\}, \ldots, \{i_{N(P)}\}, I_1, \ldots, I_l \}$ ($l \ge 0$), but the set $\tilde{P} = \{ \{i_1,\ldots, i_{N(P)} \}, I_1, \ldots, I_l \}$ is also a valid partition and $g_{\tilde{P}} = \IE_\mu g_P$. As a consequence, $\IE_\mu f_{d,A} = 0$. \par
  For any $l \in \mathcal{I}$ write $T_l$ for the formal operator that replaces $x_l$ by $\hat{x}_l$, and use the short-hand notation $x_I \coloneqq (x_i)_{i \in I}$ for any $I \subset \mathcal{I}$. We shall make use of the inequality
  \begin{align*}
    \mathfrak{h}_l(f_{d,A}) & = \sup_{x_l, \hat{x}_l} \Big \lvert\sum_{I = (i_1,\ldots,i_d)} A_I \sum_{P \in \mathcal{P}(I)} (-1)^{M(P)}  \left( g_P(x_I)) - g_P(T_l(x_I)) \right) \Big \rvert                   \\
    & = \sup_{x_l, \hat{x}_l} \Big \lvert(f(x_l) - f(\hat{x}_l))\sum_{k = 1}^d \sum_{I = (i_1,\ldots, i_{d-1})} A^{k = l}_I \sum_{P \in \mathcal{P}(I)} (-1)^{M(P)} g_P(x_I) \Big \rvert \\
    & \le c \Big \lvert\sum_{k = 1}^d \sum_{I = (i_1,\ldots, i_{d-1})} A^{k=l}_I \sum_{P \in \mathcal{P}(I)} (-1)^{M(P)} g_P(x_I)\Big \rvert \\
    & = c \Big \lvert \sum_{k= 1}^d f_{d-1, A^{k=l}} \Big \lvert.
  \end{align*}
  Here, the second equality follows from the fact that $T_l(x_{i_1},\ldots,x_{i_d}) = (x_{i_1},\ldots, x_{i_d})$ unless $i_j = l$ for some $j$ and the definition of $g_P$, and the inequality in the third line is a consequence of the assumptions. \par
  We can assume $c = 1$, since the general case follows by rescaling $f$ by $c^{-1}$. First, by the $\partial-\mathrm{LSI}(\sigma^2)$ property we have
  \[
    \norm{f_{d,A}}_p^2 \le \norm{f_{d,A}}_2^2 + \sigma^2(p-2) \norm{\mathfrak{h}(f_{d,A})}_p^2.
  \]
  Using the Poincar{\'e} inequality with respect to $\mathfrak{h}$ \eqref{eqn:generalPI} gives
  \begin{align*}
    \norm{f_{d,A}}_2^2 &\le \sigma^2 \sum_{l_1} \mu\left( (\mathfrak{h}_{l_1}f_{d,A})^2 \right) \le \sigma^2 \sum_{l_1} \mu \left( (\tilde{\mathfrak{h}_{l_1}} f_{d,A})^2 \right) = \sigma^2 \norm{\tilde{\mathfrak{h}}f_{d,A}}_2^2 \le \sigma^2 \norm{\tilde{\mathfrak{h}}f_{d,A}}_p^2,
  \end{align*}
  where $\tilde{\mathfrak{h}}_l$ replaces $\sup_{x_l, \hat{x}_l} \abs{f(x_l) - f(\hat{x}_l)}$ by $1$. Clearly, since $\mathfrak{h}_l f_{d,A} \le \tilde{\mathfrak{h}_l} f_{d,A}$ pointwise, the $L^p$-norms can be estimated as well, resulting in $
    \norm{f_{d,A}}_p^2 \le \sigma^2(p-1) \norm{\tilde{\mathfrak{h}} f_{d,A}}_p^2.$
  We have
  \[
    \tilde{\mathfrak{h}}_{l_1} f_{d,A} = \abs*{\sum_{k_1 = 1}^d \sum_{I = (i_1,\ldots, i_{d-1})} A_I^{k_1 = l_1} \sum_{P \in \mathcal{P}(I)} (-1)^{M(P)} g_P},
  \]
  which itself is the absolute value of a sum of centered random variables, so that the process can be iterated; in each step, the Poincar{\'e} inequality \eqref{eqn:generalPI} can be used and
  \[
    \tilde{\mathfrak{h}}_{l_1} \cdots \tilde{\mathfrak{h}}_{l_s} f_{d,A} = \abs*{\sum_{k_1 = 1}^d \cdots \sum_{k_s = 1}^{d-s} \sum_{I = (i_1,\ldots, i_{d-s})} A^{k_1 = l_1, \ldots, k_s = l_s}_I \sum_{P \in \mathcal{P}(I)} (-1)^{M(P)} g_P}.
  \]
  Thus, using the inequality $\tilde{h}\abs{\tilde{h}^{(d)} f} \le \abs{\tilde{h}^{(d+1)} f}$ and taking the square root yields
  \[
    \norm{f_{d,A}}_p \le (\sigma^2p)^{d/2} \norm{A}_2.
  \]
  The multilevel concentration follows as in the proof of Theorem \ref{theorem:com}.
\end{proof}

\printbibliography

\end{document}